\title{Fixed-point property of random quotients by plain words}
\author{Hiroyasu Izeki}
\address{Department of Mathematics, Keio University, 
3-14-1 Hiyoshi, Kohoku-ku, Yokohama, 223-8522, Japan}
\email{izeki@math.keio.ac.jp}
\thanks{Suupported by the Grant-in-Aid
for Scientific Research, The Ministry of Education,
Science, Sports and Culture, Japan, No.~21540062.}
\numberwithin{equation}{section}
\theoremstyle{plain}
\newtheorem{Theorem}{Theorem}[section]
\newtheorem{Proposition}[Theorem]{Proposition}
\newtheorem{Lemma}[Theorem]{Lemma}
\theoremstyle{definition}
\newtheorem{Definition}[Theorem]{Definition}
\theoremstyle{remark}
\newtheorem{Remark}{{\bf Remark}}
\newcommand{\N}{{\mathbb N}}
\newcommand{\isom}{\mathop{\rm Isom}\nolimits}
\newcommand{\cat}{\mathrm{CAT}(0)}
\begin{document}

\begin{abstract}
We show a fixed-point property of certain random groups for a
wide class of $\cat$ spaces. The model of random groups under
consideration is given as the set of presentations $(S,R)$, where $S$ is
a generating set and the set of relations $R$ is a subset of the set of
all plain words of the same length with suitably fixed density.  
Our main theorem says that groups obtained by such presentations have
fixed-point property for all $\cat$ spaces having bounded singularities
with high probability. 
\end{abstract}

\maketitle

\section{Introduction}

Let $Y$ be a metric space and $\isom (Y)$ the group of isometries of
$Y$. We say a group $\Gamma$ has fixed-point property for $Y$ if any
isometric action of $\Gamma$ on $Y$ admits a fixed point; namely, 
for any homomorphism $\rho \colon \Gamma \longrightarrow \isom (Y)$,
there is a point $p \in Y$ such that $\rho(\gamma)p=p$ for all 
$\gamma \in \Gamma$.  
When we take a Hilbert space $\mathcal{H}$ as $Y$,
according to theorems due to Delorme and Guichardet, the
fixed-point property for $\mathcal{H}$ is equivalent to the Kazhdan's
Property (T).  Groups with Kazhdan's Property (T) have interesting
and misterious feature as described in \cite{Bekka-delaHarpe-Valette}. 
On the other hand, a group with the fixed-point property for trees
cannot be decomposed into an amalgamated free product as
explained in \cite{Serre}. 

Looking at these examples, one may suspect that having such a
fixed-point property imposes a strong restriction on groups. 
Recent progress suggests that, however,  groups with fixed-point 
property are distributed densely in a certain class of finitely
generated groups as shown in \cite{Gromov3}, \cite{Silberman},
\cite{Zuk2},  \cite{Izeki-Kondo-Nayatani1}, 
\cite{Izeki-Kondo-Nayatani2}, and \cite{Fujiwara-Toyoda}. 
In these papers, we see that certain random groups have strong
fixed-point property: random groups in the triangle model (\cite{Zuk2}, 
\cite{Izeki-Kondo-Nayatani1}) and the graph model (\cite{Gromov3},
\cite{Silberman}, \cite{Izeki-Kondo-Nayatani2}, and
\cite{Fujiwara-Toyoda}).  Here, models of 
random groups are provided as certain sets of group presentations, and
the phrase ``random groups in the model admit fixed-point property''
means that groups given by presentations in the model admit
fixed-point property with high probability. 
In a slightly different direction, there is a result on so-called marked
groups presented in \cite{Kondo}, which claims that groups with
fixed-point property is dense in the closure of the set of marked groups
which are torsion-free, infinite, and non-elementary hyperbolic
with respect to a certain 
topology.  Also we should mention that the 
existence of a group with very strong fixed-point property is shown in 
\cite{Fisher-Silberman}  and \cite{Naor-Silberman}. 
Along the same lines as these results, our main theorems in the
present paper say that groups with fixed-point property for certain
metric spaces in fact form an extensive class among 
finitely presented groups; random quotients by plain words with certain
densities admit strong fixed-point property as we explain below. 

Let $m$, $\eta$ be natural numbers and $0 < d < 1$ a real number. 
Denote by $S$ a set consisting of $m$ letters and their inverses:
$S=\{s_1, s_1^{-1}, \dots, s_m, s_m^{-1} \}$.  We call a sequence 
of letters belonging to $S$ a {\it word}, and the number of letters
forming a word the {\it length} of the word. 
Thus, any word with length $\eta$ takes the form of 
$s_{i_1}^{\epsilon_1}s_{i_2}^{\epsilon_2}\dots 
s_{i_{\eta}}^{\epsilon_{\eta}}$,
where $\epsilon_j=\pm 1$ and $s_{i_j} \in S$. 
We denote by $W_{\eta}$ the set of words (plain words) of length $\eta$. 
We consider a group generated by $S$ with a presentation $P=(S,R)$, where
the set of relations $R$ is a subset of $W_{\eta}$. 
We note that a relation $r \in R$ may be
a reducible one,  namely, $r$ may contain a sequence of the form 
$s_i s_i^{-1}$ or $s_j^{-1}s_j$. 
Fix a real number $c>1$ and set
\begin{equation*}
 P(m,\eta,d)=\{P=(S,R) \mid R \subset W_{\eta} \text{ and }
        c^{-1}(2m)^{d\eta} \leq \# R \leq c(2m)^{d\eta}\}. 
\end{equation*}
Let $\Gamma$ be the free group generated by $S$: 
$\Gamma=\langle s_1 \rangle \ast \dots \ast \langle s_m \rangle$. 
Then each element in $W_{\eta}$ determines an element in $\Gamma$ in an
obvious way. 
Denote by $\Gamma_P$ the group given by a presentation $P=(S,R)$:
$\Gamma_P = \Gamma / \overline{R}$, where $\overline{R}$ is the normal
closure of $R$ in $\Gamma$.  It is an easy exercise to show that, for
any finitely presented group $\Gamma$, there exists
$m, \eta\in \N$ and $0<d<1$ such that $\Gamma =\Gamma_P$ for some 
$P \in P(m,\eta,d)$; that is, our model contains all finitely presented
groups. 

Let $Y$ be a $\cat$ space, a metric space with nonpositive 
curvature (see \S \ref{sec:energy-equiv-maps} for the definition).
Complete, simply connected Riemannian manifolds with 
nonpositive sectional curvature, trees, and Hibert spaces are typical
and important examples of $\cat$ spaces. 
When $\Gamma$ has fixed-point property for $Y$, we say $\Gamma$ has
$F(Y)$ in what follows. Note that if $\Gamma$ is a finite group, then
$\Gamma$ has fixed-point property for any $\cat$ space. In fact, an
orbit of any isometric action of $\Gamma$ on $Y$ consists of finite
points, and hence, in $\cat$ space $Y$, we can find the barycenter of
the orbit that must be invariant under the action of $\Gamma$. 
(See Proposition \ref{barycenter} for the definition and the existence
of the barycenter.) 
Thus, in what follows, our
interest is in an infinite group $\Gamma$ with fixed-point property for
$\cat$ spaces. 

If we take $d$ in $P(m,\eta,d)$ larger, then 
producing homomorphism from $\Gamma_P$ for $P \in P(m,\eta,d)$ into
$\isom (Y)$ becomes more difficult, which means that isometric
actions of $\Gamma_P$ on $Y$ must carry strong restriction, in general. 
This leads us to a naive consideration that the larger $d$ becomes, 
the more we can expect  $\Gamma_P$ for $P \in P(m,\eta,d)$ to have
fixed-point property.  On the other hand, if we take $d$ too large, then
$R$ becomes large; it may force 
$\Gamma_P$ to be a finite group, which is the case we want to avoid.  
Our main theorems tell us that the truth is exactly as this
consideration describes, and there does exist $d$ which provides
infinite groups with fixed-point property for a certain class of $\cat$
spaces with high probability. 
For Hilbert spaces, we will show the following theorem. 

\begin{Theorem}
 \label{thm:main_Hilbert}
 Let $\mathcal{H}$ be a Hilbert space.  For
 $1/3<d<1-\frac{1}{2}\log_{2m}(8m-4)$, 
\begin{equation*}
 \lim_{\eta\to \infty} 
 \frac{\#\{P \in P(m,\eta,d) \mid 
        \Gamma_P \text{ is an infinite group having }F(\mathcal{H})\}}
      {\# P(m,\eta,d)}= 1
\end{equation*}
 holds. 
\end{Theorem}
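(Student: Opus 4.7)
\medskip
\noindent\textbf{Proof proposal.}
The plan is to establish the theorem in two parts: (i) a high-probability spectral-gap result on the link of the presentation $2$-complex, yielding Kazhdan's property $(T)$ (and hence $F(\mathcal{H})$ by Delorme--Guichardet), and (ii) a small-cancellation-style counting argument ensuring $\Gamma_P$ is infinite with probability tending to one.

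For part (i), I would adapt \.{Z}uk's spectral criterion to the plain-word model. The link $\mathrm{Lk}_v$ of a vertex $v$ in the presentation $2$-complex of $P$ has vertex set $S$, with edges contributed by pairs of letters appearing consecutively (cyclically) in each $r \in R$. Each relation of length $\eta$ gives $\eta$ such corners, so the expected total edge multiplicity in $\mathrm{Lk}_v$ is of order $\eta(2m)^{d\eta}$, distributed over the $(2m)^{2}$ possible edges. When $d > 1/3$, these counts are sufficiently large that a random-graph concentration argument---most naturally a moment/trace-method computation tailored to the multi-edge model---forces the first non-zero eigenvalue of the normalized Laplacian of $\mathrm{Lk}_v$ to exceed $1/2$ with probability tending to one. \.{Z}uk's criterion then yields property $(T)$ for $\Gamma_P$, and thus $F(\mathcal{H})$.

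For part (ii), I would show that for $d < 1 - \tfrac{1}{2}\log_{2m}(8m-4)$, the group $\Gamma_P$ is infinite with high probability. The strategy is to verify a Gromov-type linear isoperimetric inequality for the presentation by bounding the expected number of pairs of (cyclic conjugates of) relations sharing a long common subword, together with the expected number of length-$\eta$ plain words that collapse to short reduced words. The stated upper bound on $d$ is dictated by requiring that both counts are negligible; the factor $8m-4 = 4(2m-1)$ enters naturally when one enumerates the plain-word neighbours of a reduced word of a given length. Once the linear isoperimetric inequality holds, $\Gamma_P$ is non-elementary hyperbolic, and in particular infinite.

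The principal obstacle will be the spectral analysis in (i). Because $R \subset W_\eta$ consists of plain, not necessarily reduced, words, a random relation produces reducible corners $\{s, s^{-1}\}$ with probability $1/(2m)$ at each position; these contaminate $\mathrm{Lk}_v$ with loops and systematic multiplicities that are absent from the standard triangular model. A direct application of the classical criterion fails, and one must either establish a weighted/multi-edge version of the spectral criterion robust to such contamination, or first prune the reducible corners and argue that the remaining link is still dense enough to satisfy the original criterion. Matching the resulting moment estimates with the prescribed density thresholds $1/3$ and $1 - \tfrac{1}{2}\log_{2m}(8m-4)$ is the technical heart of the proof.
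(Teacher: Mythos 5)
There is a genuine gap in part (i), and it lies exactly where you locate the ``technical heart.'' \.Zuk's spectral criterion applies to \emph{triangular} presentations: it converts a gap $\lambda_1>1/2$ for the link graph into Property (T) only when every relator has length $3$. For relators of length $\eta\to\infty$, the link you describe --- the graph on the $2m$ vertices of $S$ with one edge per corner of each relator --- carries no information about the density $d$: each relation contributes $\eta$ corners, so the expected multiplicity of every edge is of order $\eta(2m)^{d\eta}/(2m)^2\to\infty$ for \emph{every} $d>0$, and concentration makes this link an almost-regular dense multigraph with $\lambda_1$ close to $1$ regardless of $d$. Your argument would therefore ``prove'' Property (T) at all positive densities, which is false; the threshold $1/3$ never actually enters your estimate. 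The correct reduction (the one suggested in Ollivier's survey and carried out by Kotowski--Kotowski for the reduced-word model) is to re-generate $\Gamma_P$ by all words of length $\approx\eta/3$ and split each relator into three such blocks, so that the relevant link is a graph on the $(2m)^{\eta/3}$ words of that length; only then does the comparison between $\#R=(2m)^{d\eta}$ and the number of vertices $(2m)^{\eta/3}$ produce the threshold $d>1/3$. This is precisely the graph $G_P$ on $V=W_n$ (with $n\approx\eta/3$ when $k=1$) constructed in \S\ref{sec:graph-assoc-pres}, and the trace/moment method you propose is indeed what is used there --- but applied to $G_P$, not to the link on $S$.

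Two further points of comparison. First, the paper does not pass through Property (T) at all: instead of \.Zuk's criterion it uses the $n$-step energy inequality (Theorem \ref{thm:n-step_ineq}) together with Proposition \ref{prop:random_Rayleigh_quotient}, which shows that the Rayleigh quotient of the induced map on $G_P$ approximates the ratio $E_{\mu^{l}}(\tilde f)/E_{\mu^{2n}}(\tilde f)$ for an arbitrary equivariant map into an arbitrary $\cat$ target; this is what allows the same machinery to yield Theorem \ref{thm:main} for general $\cat$ spaces, where no analogue of Delorme--Guichardet is available. If you only want the Hilbert case, a corrected \.Zuk-style route is viable, but you must still handle the reducible words (as you anticipate) and the triangular re-generation. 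Second, for part (ii) the paper simply quotes Ollivier's phase-transition theorem for hyperbolicity of random quotients at $d<1-\frac{1}{2}\log_{2m}(8m-4)$ (the exponent being the gross cogrowth of the free group), rather than re-deriving the isoperimetric inequality; your sketch of where $8m-4$ comes from is consistent with that, but reproving it is a substantial and unnecessary detour.
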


If we only allow irreducible words to be relations in the
definition of $P(m, \eta ,d)$, we obtain the density model
of Gromov.  One can expect a result for the density model similar
to Theorem~\ref{thm:main_Hilbert}, and an idea of the proof, a reduction
to Zuk's theorem (\cite[Theorem 4]{Zuk2}), is suggested in
\cite[I.3.g]{Ollivier0}\footnote{After the present paper was submitted, 
a complete proof was given by 
Marcin Kotowski and Micha{\l} Kotowski \cite{Kotowsiki}.}. 

In order to state our result for general $\cat$ spaces, we need to
introduce an invariant of $\cat$ spaces defined in
\cite[Definition 6.1]{Izeki-Nayatani}: Let $\mu$ be a
measure on a $\cat$ space $Y$, whose support $\mathrm{supp}\, \mu$
consists of finite points.  
Denote by $\overline{\mu}$ the barycenter of $\mu$ (see
Proposition~\ref{barycenter}), and let
\begin{equation*}
 \delta(\mu) = \inf \left\{ \left.
  \frac{\left\vert \displaystyle{\int_Y \varphi(p) \ d\mu(p)} \right\vert^2}
   {\displaystyle{\int_Y \vert \varphi(p) \vert^2 \ d\mu(p)} } \right|
 \begin{matrix}
  \varphi \colon \mathrm{supp}\, \mu \longrightarrow \mathcal{H}, \quad 
   \varphi(\overline{\mu})= \mathbf{0}, \hskip2.5cm \\ 
   \vert \varphi(p) \vert = d(\overline{\mu}, p) \text{ for any } p \in
  \mathrm{supp}\, \mu, \hskip1.5cm \\
   \vert \varphi(p) - \varphi(q) \vert \leq d(p,q) \text{ for any }
   p,q \in \mathrm{supp}\, \mu
 \end{matrix}  
  \right\}, 
\end{equation*}
where $\mathcal{H}$ is a Hilbert space, and $\mathbf{0}$ denotes its
origin. Then $\delta(Y)$ is defined to be
\begin{equation*}
 \delta(Y) = \sup \left\{
  \delta(\mu) \mid \mu \text{ is a measure on }Y 
   \text{ with }\# \mathrm{supp}\, \mu < \infty
  \right\}. 
\end{equation*}
This invariant $\delta(Y)$
measures the singularity of $Y$, and obviously it takes values in
$[0,1]$. 
If $\delta(Y)=0$, then $Y$ is considered to be almost regular;
$\cat$ Riemannian manifolds and trees satisfy $\delta(Y)=0$. 
For general $\cat$ spaces we will show: 

\begin{Theorem}
\label{thm:main}
 Take $0\leq \delta<1$ and an integer $k$ so that $k>1/(1-\delta)$.  
 Let $\mathcal{Y}_{\delta}$ be the class of $\cat$ spaces with 
 $\delta(Y) \leq \delta$. 
 Then, for $k/(2k+1)<d<1-\frac{1}{2}\log_{2m}(8m-4)$, 
\begin{equation*}
 \lim_{\eta\to \infty} 
 \frac{\#\{P \in P(m,\eta,d) \mid 
        \Gamma_P \text{ is an infinite group having }F(Y) 
        \text{ for all }Y  \in \mathcal{Y}_{\delta}\}}
      {\# P(m,\eta,d)}= 1
\end{equation*}
 holds. 
\end{Theorem}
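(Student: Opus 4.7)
The plan is to reduce Theorem~\ref{thm:main} to a spectral gap estimate for the links of a simplicial $2$-complex naturally associated to each presentation $P$. By results going back to Garland and generalized to the $\cat$ setting in \cite{Izeki-Nayatani} and \cite{Izeki-Kondo-Nayatani1}, a group acting properly cocompactly on a simply connected $2$-complex $X$ has $F(Y)$ for every $Y \in \mathcal{Y}_\delta$ provided that every vertex link of $X$ has first nonzero eigenvalue strictly greater than a threshold of the form $\tfrac{1}{2(1-\delta)}$. So the task is, for typical $P \in P(m,\eta,d)$, to construct an action of $\Gamma_P$ on such an $X$ and to bound the link spectra below.

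The role of the parameter $k$ with $k > 1/(1-\delta)$ is to allow a fine subdivision of each relation. I would introduce an auxiliary generating set consisting of plain words in $\Gamma$ of length $\ell$ comparable to $\eta/(2k+1)$, and build a Cayley-type complex $X$ whose $2$-cells are $(2k+1)$-gons obtained by cutting each $r \in R$ into $2k+1$ segments. The link of the identity is then a graph on roughly $(2m)^\ell$ vertices whose edges are produced by consecutive segments of relations. The density condition $d > k/(2k+1)$ is precisely what forces the expected number of link-edges at each vertex to diverge with $\eta$, so the link behaves like a dense random graph.

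Granted this setup, I would invoke concentration-of-measure estimates for the adjacency operator of random regular-like graphs (Chernoff bounds together with trace-moment or matrix-Bernstein methods) to show that, with probability tending to $1$, every link has $\lambda_1 > \tfrac{1}{2(1-\delta)}$. The gap between $k/(2k+1)$ and the eigenvalue threshold is positive exactly when $k(1-\delta) > 1$, which is the role of the hypothesis on $k$. Applying the Izeki--Nayatani fixed-point criterion to $X$ then gives $F(Y)$ for all $Y \in \mathcal{Y}_\delta$ simultaneously, since the spectral hypothesis does not involve $Y$.

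Separately, one must verify that $\Gamma_P$ is infinite with probability tending to $1$, and this is where the upper bound $d < 1 - \tfrac{1}{2}\log_{2m}(8m-4)$ enters: a counting argument bounds the number of words of length $\eta$ that can be killed by relations of density $d$, and the stated bound keeps the relation set too sparse to collapse $\Gamma_P$ to a finite quotient. The main obstacle, in my view, is the fact that $R$ is a set of \emph{plain} (possibly reducible) words: cancellations inside a relation correlate distinct segments and threaten the independence needed for the random-graph spectral estimate. I would address this by showing that the subset of $R$ consisting of relations whose $(2k+1)$-segmentation is ``generic'' (no long internal cancellations) has full density, so that the spectral estimate survives after conditioning on this typical event.
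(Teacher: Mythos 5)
Your overall architecture --- associate a random graph to $P$, prove a spectral gap with high probability, and feed it into a Garland-type fixed-point criterion --- shares a family resemblance with the paper, but the specific criterion you invoke cannot prove the theorem in the stated generality, and this is precisely the obstruction the paper's actual argument is designed to circumvent.

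The gap is in the threshold. The link-based (Garland--\.Zuk--Izeki--Nayatani) criterion you cite requires every vertex link $L$ to satisfy $\lambda_1(L) > \tfrac{1}{2(1-\delta)}$. But for the normalized Laplacian of any finite graph on $N$ vertices the trace identity forces $\lambda_1 \leq \tfrac{N}{N-1}$, so $\lambda_1 \to 1$ at best as the links grow; meanwhile $\tfrac{1}{2(1-\delta)} \geq 1$ as soon as $\delta \geq \tfrac12$. Hence your criterion is vacuous for $\delta \geq \tfrac12$, whereas the theorem is claimed for all $\delta < 1$ (and the introduction stresses that $\mathcal{Y}_{1/2}$ already contains all $\cat$ cube complexes, with spaces of $\delta$ arbitrarily close to $1$ existing). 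There is no parameter in the link criterion that the integer $k$ can relax: the threshold depends on $\delta$ alone, so your sentence ``the gap between $k/(2k+1)$ and the eigenvalue threshold is positive exactly when $k(1-\delta)>1$'' conflates two different mechanisms. In the paper, $k$ enters through Gromov's $n$-step energy comparison (Theorem \ref{thm:n-step_ineq}): if $E_{\mu^{2n}}(f) \leq (2k-\varepsilon)E_{\mu^l}(f)$ for all equivariant $f$, then there is a fixed point. The factor $2k$ on the right can be made as large as one likes, and the chain
\begin{equation*}
 \frac{E_{\mu^l}(\tilde f)}{E_{\mu^{2n}}(\tilde f)} \gtrsim RQ^G(\overline{f})
 \geq \tfrac{1}{2}(1-\delta)\lambda_1(G_P) \approx \tfrac{1}{2}(1-\delta)
\end{equation*}
(Propositions \ref{prop:random_Rayleigh_quotient} and \ref{prop:random_eigenvalue}) yields $E_{\mu^{2n}} \leq \tfrac{2}{1-\delta}(1+\varepsilon')E_{\mu^l}$, which is $\leq (2k-\varepsilon'')E_{\mu^l}$ exactly when $k > 1/(1-\delta)$. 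That is where the hypothesis on $k$ lives; it has no analogue in a single-complex link argument. To repair your proof you would have to replace the Garland criterion by the $n$-step energy criterion (or some equivalent device that trades a larger ``time scale'' for a weaker spectral requirement), at which point you are essentially reconstructing the paper's proof.

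Two smaller points. First, the infiniteness is not proved by a fresh counting argument in the paper; it is quoted from Ollivier's sharp phase-transition theorem, which is where the upper bound $d < 1-\tfrac12\log_{2m}(8m-4)$ (the gross cogrowth of the free group) comes from --- your sketch of this step is too vague to certify non-collapse. Second, your worry about internal cancellations in plain words is not where the real difficulty sits: the paper's graph $G_P$ has the set of \emph{plain} words $W_n$ as its vertex set and compares the graph Rayleigh quotient with energies of the standard (not reduced) random walk on the free group, so reducible words are handled bookkeeping-wise rather than by conditioning on a generic-segmentation event.
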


 A part of the theorems above is due to Ollivier; 
 Ollivier \cite{Ollivier} proved that, by choosing 
 $d< 1-\frac{1}{2} \log_{2m}(8m-4)$, 
\begin{equation*}
 \lim_{\eta\to \infty} 
 \frac{\#\{P \in P(m,\eta,d) \mid \Gamma_P 
           \text{ is non-elementary hyperbolic}\}}
      {\# P(m,\eta,d)} = 1, 
\end{equation*}
 where $\Gamma$ is non-elementary hyperbolic means that $\Gamma$ is
 infinite hyperbolic group (in the sense of Gromov) which does not
 contain $\Z$ as a finite index subgroup. 
 Since $\frac{1}{2}\log_{2m}(8m-4)\to 1/2$ as $m\to \infty$, 
 by taking $m$ sufficiently large, 
 Theorem \ref{thm:main} tells us that, for any $0 \leq \delta <1$, 
 we can choose $d$ so that 
 infinite hyperbolic groups with $F(\mathcal{Y}_{\delta})$ form an 
 extensive class among the groups coming from $P(m,\eta,d)$. Here a
 group $\Gamma$ has $F(\mathcal{Y}_{\delta})$ means that $\Gamma$ has
 $F(Y)$ for all $Y \in \mathcal{Y}_{\delta}$.  We note that, in
 \cite{Fujiwara-Toyoda}, it is shown that any $\cat$ cube complex $Y$
 satisfies $\delta(Y) \leq 1/2$.  Therefore $\mathcal{Y}_{1/2}$ contains
 all $\cat$ Riemannian manifolds and $\cat$ cube complexes. On the other
 hand, there does exist $\cat$ space $Y$ with $\delta(Y)=1$
 (\cite{Kondo2}). 

 This paper is organized as follows. In \S \ref{sec:graph-assoc-pres},
 we introduce a finite graph associated to each element of $P(m,\eta,d)$
 and state Proposition \ref{prop:random_eigenvalue}, which concerns with
 the eigenvalue of random graphs associated to $P(m,\eta,d)$.  
 In \S \ref{sec:energy-equiv-maps}, we
 introduce an energy of equivariant maps from $\Gamma$ to $Y$, and give
 a criterion for an action of $\Gamma$ on $Y$ to have a fixed point in
 terms of the ratio of certain energies, which is a modification
 of \cite[Lemma 2.7(2)]{Izeki-Kondo-Nayatani2}.  
 We also state Proposition \ref{prop:random_Rayleigh_quotient}, which
 connects the ratio of energies and the eigenvalue of the graph
 introduced in \S \ref{sec:graph-assoc-pres}.  In \S \ref{sec:proof}, we
 give the proofs of Theorem \ref{thm:main_Hilbert} and \ref{thm:main},
 assuming Propositions \ref{prop:random_eigenvalue} and
 \ref{prop:random_Rayleigh_quotient}. After a brief review of some facts
 in probability theory in \S \ref{sec:some-facts-from}, we give the
 proofs of Propositions \ref{prop:random_eigenvalue} and
 \ref{prop:random_Rayleigh_quotient} in \S \ref{sec:conc-proofs-prop}.  
 As a matter of fact, our proofs give results on random quotients of any 
 group, which will be explained in \S \ref{sec:random-quotient}. 

 {\bf Acknowledgement.}  The author would like to thank Shin Nayatani
 and Takefumi Kondo for their interest in this work. The author is also
 grateful to Yann Ollivier for his valuable
 comments on the first manuscript of this paper.

\section{A graph associated to a presentation}
\label{sec:graph-assoc-pres}

Fix a natural number $k$.  For each natural number $\eta \geq k$, we
take $l$ so that $l$ is the largest integer
satisfying $\eta \equiv l$ (mod $2$) and $l \leq \eta/(2k+1)$.  
We set $2n = \eta -l$.  Therefore we have
\begin{equation}
 \label{eq:k_and_l}
 \begin{split}
    (2k+1)l \leq & \eta=(2n+l) \leq (2k+1)l + 2(2k+1), \\  
    kl \leq & n \leq kl + (2k+1). 
 \end{split}
\end{equation}
Note that 
$\eta \to \infty$ implies both $l \to \infty$ and $n\to \infty$. 

Let $0<d<1$ and $P \in P(m,\eta,d)$. 
We associate a finite graph $G_P$ to
$P=(S,R)$ as follows: the vertex set $V$ of $G_P$ is identified with $W_n$ 
and a pair $\{v_1,v_2\} \in V \times V$ is 
joined by an edge if there is a word 
$v_1uv_2^{-1} \in R \subset W_{\eta}$, where $u \in W_l$ and $v^{-1}$
denotes the inverse word 
$s_{i_n}^{-\epsilon_n} \dots s_{i_1}^{-\epsilon_1}$ of 
$v=s_{i_1}^{\epsilon_1} \dots s_{i_n}^{\epsilon_n}$, $\epsilon_j=\pm 1$. 
We denote the set of (unoriented) edges of $G_P$ by $E_P$.  Thus
$G_P=(V,E_P)$, where 
\begin{equation*}
 \begin{cases}
  V = W_{n}, \\
  E_P = \{\{v_1,v_2\} \mid v_1 u v_2^{-1} \in R 
           \text{ for some } u \in W_{l}\}.  
 \end{cases}
\end{equation*}
If there are two words $u,u' \in W_{l}$ such
that $r=v_1uv_2^{-1}, r'=v_1u'v_2^{-1} \in R$, 
then $r$ and $r'$ define distinct edges with the same endpoints 
$v_1, v_2 \in V$;  thus $G_P$ may have multiple edges.  Also $G_P$ may
have self loops, since $r=vuv^{-1}$ may appear in $R$.
Let $\nu$ be a measure on $V \times V$ defined by
\begin{equation*}
 \nu(v_1,v_2) = \frac{\#\{u \in W_l \mid v_1uv_2^{-1} \in R\} 
  + \#\{u' \in W_l \mid v_2 u' v_1^{-1} \in R\}}{2\# R}. 
\end{equation*}
By definition, $\nu$ is a symmetric probability measure on 
$V \times V$ and can be regarded as a probability measure
on $E_P$. By a slight abuse of notation, we denote by $\nu$ a
probability measure (or simply a funtion) on $V$  defined as
\begin{equation*}
  \nu(v)= \sum_{v'} \nu(v,v') 
        = \frac{\#\{u \in W_{n+l} \mid vu \in R\} 
  + \#\{u' \in W_{n+l} \mid u' v^{-1} \in R\}}{2\# R}. 
\end{equation*}
Note that
\begin{equation*}
 \nu(v)= \frac{\mathrm{deg}(v)}{2\# R} 
       = \frac{\mathrm{deg}(v)}{2\# E_P}, 
\end{equation*}
where $\mathrm{deg}(v)$ is the {\it degree} of $v$, which is defined to
be the cardinality (counted with multiplicity) of edges having $v$ as
one of the endpoints. Since a self loop comes from a relation of the
form $vuv^{-1}$, it is counted twice in the definition of our degree. 
For real valued functions $f$, $g$ defined on $V$, their inner product
is defined by 
\begin{equation*}
 (f,g)_{L^2}= \sum_{v \in V} \nu(v) f(v)g(v). 
\end{equation*}
The {\it Laplacian} $\Delta_P$ of $G_P$ is defined by
\begin{equation*}
 \Delta_P f(v)  = f(v) - \sum_{v'} \frac{\nu(v,v')}{\nu(v)} f(v'), 
\end{equation*}
for a funtion $f \colon V \longrightarrow \R$ on $V$. 
It is easy to see that $\Delta_P$ is a symmetric and nonnegative
operator with respect to the inner product $(\cdot, \cdot)_{L^2}$
defined above.  The eigenvalues of $\Delta_P$ are distributed in the
interval $[0,2]$, and it is clear that  
constant functions are eigenfunctions corresponding to the smallest
eigenvalue $0$. We denote by $\lambda_1(G_P)$ the second smallest
eigenvalue of $\Delta_P$ counted with multiplicity, which is positive if
and only if $G_P$ is connected. 
The {\it Rayleigh quotient} 
$RQ(f)$ of $f \colon V \longrightarrow \R$ is defined as 
\begin{equation*}
 RQ(f) = \frac{\Vert df \Vert^2_{L^2}}{F(f)}, 
\end{equation*}
where
\begin{equation*}
 \begin{cases}
   \Vert df \Vert^2_{L^2} = \frac{1}{2}\sum_{v \in V} \sum_{v' \in V}
    \nu(v,v') \vert f(v)-f(v'))\vert^2 \\
    F(f) = \frac{1}{2}\sum_{v,v' \in V} 
      \nu(v) \nu(v') \vert f(v)-f(v')\vert ^2.
 \end{cases}
\end{equation*}
Easy computations show that $F(f)$ is equal to the $L^2$-norm
of the component of $f$ which is perpendicular to constant functions 
and that $\Vert df \Vert^2_{L^2} = \left(f, \Delta_P f\right)_{L^2}$. 
Therefore we obtain 
\begin{equation*}
  \lambda_1(G_P)= \inf \{RQ(f) \mid f \text{ is a nonconstant
   function on }V\}. 
\end{equation*}
In \S \ref{proof_of_random_eigenvalue}, We will show the following
proposition on the eigenvalue of the Laplacian of $G_P$. 

\begin{Proposition}
 \label{prop:random_eigenvalue}
 Fix a natural number $k$. 
 If $k/(2k+1)<d$, then, for any $\varepsilon>0$, 
 \begin{equation*}
  \lim_{\eta\to \infty}
  \frac{\# \{P \in P(m,\eta,d) \mid 
                 \lambda_1(G_P) > 1-\varepsilon\}}
       {\# P(m,\eta,d)} = 1
 \end{equation*}
 holds. 
\end{Proposition}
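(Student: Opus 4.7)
The plan is to show that the averaging operator $A_P := I - \Delta_P$ has, with probability tending to $1$ as $\eta\to\infty$, second-largest eigenvalue smaller than $\varepsilon$; by the Rayleigh quotient characterization, $\lambda_1(G_P) = 1 - \mu_2(A_P)$, so this is equivalent to the desired conclusion. First I would carry out the parameter count: set $N := \#V = (2m)^n$ and $M := \#R \asymp (2m)^{d\eta}$. The inequalities \eqref{eq:k_and_l} give $n \le kl + (2k+1)$ and $\eta = 2n+l$, hence $n/\eta \to k/(2k+1)$, and the expected average degree $\bar d := 2M/N \asymp (2m)^{d\eta - n}$ grows as a positive power of $(2m)^\eta$ precisely when $d > k/(2k+1)$. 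In particular $\bar d / \log N \to \infty$, which is the regime in which sparse random graphs enjoy strong spectral concentration.

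Next I would model the randomness. The tautological bijection $W_\eta \leftrightarrow V \times W_l \times V$, $v_1 u v_2^{-1} \mapsto (v_1, u, v_2)$, shows that a uniform relation yields uniform endpoints. Since $M/\#W_\eta = (2m)^{(d-1)\eta} \to 0$, sampling $R$ uniformly without replacement is effectively sampling $M$ i.i.d.\ uniform words from $W_\eta$, and I would work under this approximation. Writing the (multi-)adjacency matrix as $B = \sum_{i=1}^{M} X_i$ with independent $X_i = e_{v_1^{(i)}} e_{v_2^{(i)}}^{*} + e_{v_2^{(i)}} e_{v_1^{(i)}}^{*}$, one has $\|X_i\| \le 2$, $\mathbb{E}[B]$ equal to $(\bar d / N) J$ up to an $O(1)$ diagonal correction, and $\bigl\|\sum_i \mathbb{E}[X_i^2]\bigr\| = \bar d + O(1)$.

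Then the two concentration steps. A Chernoff bound together with a union bound on $V$ gives $\deg(v) = (1+o(1))\bar d$ uniformly in $v$ with probability $\ge 1 - 2N e^{-c\bar d} = 1 - o(1)$, since $\bar d \gg \log N$; consequently $\nu(v) = (1 + o(1))/N$. A matrix Bernstein inequality (or a direct trace--moment estimate of the Füredi--Komlós type) applied to $B - \mathbb{E}[B]$ yields
\begin{equation*}
  \bigl\|B - \mathbb{E}[B]\bigr\|_{\mathrm{op}} = O\bigl(\sqrt{\bar d \log N} + \log N\bigr) = o(\bar d),
\end{equation*}
while $\mathbb{E}[B]$ acts as $\bar d$ on the line of constants and as $O(1)$ on its $L^2(\nu)$-orthogonal complement. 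Dividing by the diagonal $D = \bar d (I + o(1))$ produces $\|A_P - P_{\mathrm{const}}\|_{\mathrm{op}} = o(1)$, so $\mu_2(A_P) < \varepsilon$ for all sufficiently large $\eta$, which is the claim.

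The main obstacle is the spectral concentration bound; the rest is essentially bookkeeping. The hypothesis $d > k/(2k+1)$ is used precisely here, to guarantee an exponential gap between $\bar d$ and $\log N$, which is what matrix Bernstein requires to beat its logarithmic correction. Minor technical nuisances --- self-loops, multiple edges, and reducible relations of the form $v u v^{-1}$ --- each contribute only $O(1/N)$ to the measure $\nu$ and do not affect the operator-norm estimates, so they can be absorbed into the $o(1)$ error terms above.
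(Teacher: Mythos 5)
Your overall strategy --- center the (multi-)adjacency matrix, show that the fluctuation has operator norm $o(\bar d)$, and use Chernoff-type degree concentration to pass from the centered adjacency matrix to the normalized operator $I-\Delta_P-\pi_0$ --- is the same skeleton as the paper's proof in \S\ref{proof_of_random_eigenvalue}, and your parameter count is the right one: $d>k/(2k+1)$ enters exactly to guarantee $\bar d\gtrsim (2m)^{\varepsilon_0 l}$ with $\varepsilon_0=d(2k+1)-k>0$, which dwarfs $\log N\asymp kl$. The genuine methodological difference is the concentration tool for the operator norm. The paper estimates $\mathbb{E}(\mathrm{trace}(B^{2\alpha}))$ by counting F\"uredi--Koml\'os closed good walks and applies Markov's inequality, which yields only a polynomial tail $c'(\alpha)\varepsilon^{-2\alpha}(2m)^{-(\alpha\varepsilon_0-k)l}$; it must then choose the moment order $\alpha$ large (depending on $k$ and $\varepsilon_0$) so that this survives multiplication by the $3\sqrt{c_0(2m)^{d\eta}}$ factor coming from the model-transfer Lemma \ref{lem:relation_between_models}. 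Your matrix Bernstein route, applied to the i.i.d.\ sum $\sum_i X_i$ with $\Vert X_i\Vert\leq 2$ and variance proxy $\bar d$, gives an exponential tail of the form $Ne^{-c(\varepsilon)\bar d}$ in one stroke, which is stronger and makes that bookkeeping trivial. Either tool closes the argument; yours is arguably cleaner, the paper's is self-contained at the level of scalar moment bounds (Lemma \ref{prop:moment_estimate}).

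There is, however, one concrete gap in your modeling step. The condition $M/\#W_\eta=(2m)^{(d-1)\eta}\to 0$ does \emph{not} justify identifying uniform sampling without replacement with i.i.d.\ sampling: the probability that $M$ i.i.d.\ uniform words contain a repeat is of order $M^2/\#W_\eta=c_0^2(2m)^{(2d-1)\eta}$ (this is exactly the computation in \S\ref{sec:remark-our-model}), and this tends to $0$ only when $d<1/2$. Proposition \ref{prop:random_eigenvalue} is stated for all $d>k/(2k+1)$ with no upper bound, and this is precisely why the paper proves the eigenvalue estimate in the Bernoulli model $P''(m,\eta,d,c_0)$ --- where each word is independently a relation, so the edge multiplicities and degrees are honest independent binomials --- and only then transfers back to $P(m,\eta,d,c_0)$ via Lemma \ref{lem:relation_between_models}. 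Your argument is correct as written in the range $d<1/2$, which covers everything needed for Theorems \ref{thm:main_Hilbert} and \ref{thm:main}; to obtain the Proposition in full you should either rerun matrix Bernstein in the $P''$ model and absorb the $3\sqrt{c_0(2m)^{d\eta}}$ transfer factor (harmless against your exponential tails), or invoke a version of matrix concentration valid for sampling without replacement, since in that model the summands $X_i$ are no longer independent.
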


\section{Energy of equivariant maps and fixed points}
\label{sec:energy-equiv-maps}

In this section, we review and improve some results obtained in
\cite{Izeki-Kondo-Nayatani1} and
\cite{Izeki-Kondo-Nayatani2}, which concern the energy of
equivariant maps from a finitely generated group to a $\cat$ space. 
A complete metric space $Y$ is called $\cat$ space if any geodesic
triangle is thinner than that in the Euclidean plane in the following sense. 

Let $Y=(Y,d_Y)$ be a complete metric space and $p,q \in Y$.
A {\it geodesic} joining $p$ to $q$ is 
an isometric embedding $c$ of a closed interval $[0,l]$ into
$Y$ such that $c(0)=p$ and $c(l)=q$, where $l=d_Y(p,q)$. 
If any pair of points in $Y$ are joined by a geodesic, then $Y$ is
called a {\it geodesic space}. 
Consider a triangle in a geodesic space $Y$ with vertices 
$p_1,p_2,p_3 \in Y$ and  
three geodesic segments $p_1p_2, p_2p_3, p_3p_1$ joining them. 
We denote this triangle by $\Delta(p_1,p_2,p_3)$ and
call such a triangle a {\it geodesic triangle}.  
Note that we can take 
a triangle $\Delta(\overline{p_1},\overline{p_2},\overline{p_3})$ in
$\R^2$ so that $d_Y(p_i, p_j)=d_{\R^2}(\overline{p_i},\overline{p_j})$,
since the side lengths of $\Delta(p_1,p_2,p_3)$ satisfy the triangle
inequality. 
We call $\Delta(\overline{p_1},\overline{p_2},\overline{p_3})$ a {\it
comparison triangle} for $\Delta(p_1,p_2,p_3)$.  And we call 
a point $\overline{q}\in \overline{p_i}\overline{p_j}$
a {\it comparison point} for $q\in p_ip_j$ if 
$d_Y(p_i,q) = d_{\R^2}(\overline{p_i}, \overline{q})$.
Then we say that $\Delta(p_1,p_2,p_3)$ satisfies the 
{\it $\cat$ inequality}, 
if $d_Y(q_1,q_2) \leq d_{\R^2}(\overline{q_1}, \overline{q_2})$ for any
pair of points $q_1, q_2$ on the sides of $\Delta(p_1,p_2,p_3)$ and
their comparison points $\overline{q_1}, \overline{q_2}$. 
If every geodesic triangle in $Y$ satisfies the $\cat$ inequality, 
then $Y$ is called a {\it $\cat$ space}. 

If $Y$ is a $\cat$ space, it
is easy to verify that $Y$ has two important properties: the uniqueness
of a geodesic connecting given two points and the contractibility. 
A complete, simply
connected Riemannian manifold of nonpositive sectional curvature is a
typical example of $\cat$ space.  Also trees and Hilbert spaces are
$\cat$ spaces. 
For an detailed exposition on $\cat$ space, we refer the reader to
\cite{Bridson-Haefliger}. 

Now we recall some definitions which will be necessary later.  

\begin{Definition}
\label{tangent cone}
Let $Y$ be a $\cat$ space.

(1)  Let $c$ and $c'$ be two nontrivial geodesics in $Y$ starting 
from $p \in Y$. The {\it angle} $\angle_p(c,c')$ between $c$ and $c'$ 
is defined by 
\begin{equation*}
 \angle_p(c,c')= \lim_{t,t' \rightarrow 0}
 \angle_{\overline{p}}(\overline{c(t)},\overline{c'(t')}),
\end{equation*}
 where
 $\angle_{\overline{p}}(\overline{c(t)},\overline{c'(t')})$ 
 denotes the angle between the sides
 $\overline{p}\overline{c(t)}$ and $\overline{p}\overline{c'(t')}$
 of the comparison triangle
 $\Delta(\overline{p},\overline{c(t)},\overline{c'(t')})
 \subset \R^2$.

(2)  Let $p \in Y$.
 For a pair of nontrivial geodesics $c$, $c'$ starting from $p$, we 
 define a relation $\sim$ by $c\sim c'$ if and only if 
 $\angle_p(c,c')=0$. 
 Then $\sim$ becomes an equivalence relation on the set of
 all nontrivial geodesics starting from $p$ denoted by 
 $\widetilde{(S_pY)^{\circ}}$. 
 Then the angle $\angle_p$ induces a distance on the quotient 
 $(S_pY)^{\circ} = \widetilde{(S_pY)^{\circ}}/\sim$, 
 which we denote by the same symbol $\angle_p$.  The completion 
 $(S_pY,\angle_p)$ of the metric space $((S_pY)^{\circ}, \angle_p)$ is
 called the {\it space of directions}  at $p$.

(3) Let $TC_pY$ be the cone over $S_pY$, namely,
\begin{equation*}
 TC_pY = (S_pY \times \R_{\geq 0}) / (S_pY \times \{0\}). 
\end{equation*}
 Let $W,W' \in TC_pY$. We may write $W=(V,t)$ and $W'=(V',t')$, where
 $V,V' \in S_pY$ and $t,t' \in \R_{\geq 0}$.   Then
\begin{equation*}
 d_{TC_pY}(W, W')= t^2 + {t'}^2 - 2tt'\cos \angle_p(V,V')
\end{equation*}
 defines a distance on $TC_pY$. The metric space $(TC_pY, d_{TC_pY})$ is
 known to be a $\cat$ space and is called the {\it tangent cone} at $p$.
 We define an ``inner product''  on  $TC_pY$ by
\begin{equation*}
 \langle W, W' \rangle = tt'\cos \angle_p(V,V').
\end{equation*}
 We often denote the length $t$ of $W$ by $|W|$; thus we have
 $|W|=\sqrt{\langle W,W \rangle}=d_{TC_pY}(0_p,W)$, where $0_p$ denotes
 the origin of $TC_pY$.   

(4) Define a map $\pi_p \colon Y \longrightarrow TC_pY$ by
 $\pi_p(q)=([c], d_Y(p,q))$, where $c$ is the geodesic joining $p$ to
 $q$ and $[c]\in S_pY$ is the equivalence class of $c$. Then $\pi_p$ is
 distance non-increasing.  
\end{Definition}

We often use the following important fact. 

\begin{Proposition}[{\cite[Lemma 2.5.1]{Korevaar-Schoen}}]
\label{barycenter}
 Let $Y$ be a $\cat$ space.  Suppose that $\mu$ is a finitely supported
 measure on $Y${\rm ;} that is, $\mu$ is a measure of the form 
 $\mu=\sum_{i=1}^m \mu_i \mathrm{Dirac}_{p_i}$, where $\mathrm{Dirac}_p$
 is the Dirac measure supported on a point $p \in Y$, 
 and $p_1, \dots, p_m \in Y$. 
 Then there exists a unique point $p_0 \in Y$
 which minimizes the function
\begin{equation*} 
 F(q) = \int_Y d(p,q)^2 \, d\mu(p) = \sum_{i=1}^m \mu_i d(p_i,q)^2, 
 \quad q \in Y.
\end{equation*}
\end{Proposition}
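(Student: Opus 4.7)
The plan is to prove both existence and uniqueness simultaneously from a single strong convexity estimate, the Bruhat--Tits CN inequality, which is a direct consequence of the $\cat$ condition. Specifically, for any $p, q_0, q_1 \in Y$, letting $m$ denote the midpoint of the geodesic from $q_0$ to $q_1$, one has
\begin{equation*}
  d_Y(p,m)^2 \leq \tfrac{1}{2}d_Y(p,q_0)^2 + \tfrac{1}{2}d_Y(p,q_1)^2 - \tfrac{1}{4}d_Y(q_0,q_1)^2.
\end{equation*}
First I would derive this inequality: pick a comparison triangle $\Delta(\overline{p},\overline{q_0},\overline{q_1}) \subset \R^2$, let $\overline{m}$ be the midpoint of $\overline{q_0}\overline{q_1}$, and apply the parallelogram identity (or equivalently the Euclidean median formula) to obtain equality in the Euclidean setting; then the $\cat$ inequality applied to $m$ and its comparison point gives $d_Y(p,m) \leq d_{\R^2}(\overline{p},\overline{m})$, which yields the inequality above.

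Next I would take the weighted sum over $p = p_i$ with weights $\mu_i$ to obtain
\begin{equation*}
  F(m) \leq \tfrac{1}{2}F(q_0) + \tfrac{1}{2}F(q_1) - \tfrac{1}{4}d_Y(q_0,q_1)^2
\end{equation*}
for any $q_0, q_1 \in Y$ with midpoint $m$. Uniqueness is then immediate: if $p_0$ and $p_0'$ both achieved the infimum $I := \inf_{q \in Y} F(q)$, then the inequality applied to them would give $F(m) \leq I - \tfrac{1}{4}d_Y(p_0,p_0')^2 < I$ whenever $p_0 \neq p_0'$, a contradiction.

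For existence, I would take a minimizing sequence $\{q_n\} \subset Y$ with $F(q_n) \to I$. Applying the midpoint estimate to $q_n$ and $q_j$, and using $F(\mathrm{mid}(q_n,q_j)) \geq I$, one obtains
\begin{equation*}
  d_Y(q_n,q_j)^2 \leq 2F(q_n) + 2F(q_j) - 4I,
\end{equation*}
so $\{q_n\}$ is Cauchy. By completeness of $Y$ the sequence converges to some $p_0 \in Y$, and since $q \mapsto d_Y(p,q)^2$ is continuous for each fixed $p$ (hence $F$ is continuous as a finite weighted sum), $F(p_0) = I$.

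The only real obstacle is the derivation of the CN inequality from the $\cat$ inequality, and this is an elementary planar computation once one writes down the comparison triangle; beyond that, the whole argument is just the standard uniform-convexity pattern applied to $F$.
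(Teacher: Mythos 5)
Your argument is correct and is exactly the standard one: the paper offers no proof of its own, simply citing \cite[Lemma 2.5.1]{Korevaar-Schoen}, and that reference's proof is the same uniform-convexity pattern you describe (CN inequality from the comparison triangle, strict convexity of $F$ for uniqueness, and a Cauchy minimizing sequence plus completeness for existence). The only cosmetic point is that summing the CN inequality against $\mu$ produces the term $-\tfrac{1}{4}\bigl(\sum_i \mu_i\bigr) d_Y(q_0,q_1)^2$ rather than $-\tfrac{1}{4}d_Y(q_0,q_1)^2$, so one should note that the total mass is positive (as it is for the probability measures used in the paper); the constant is harmless for both the uniqueness and the Cauchy estimates.
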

We call the point $p_0$ the {\it barycenter of} 
$\{p_1, \dots ,p_m\}$ {\it with respect to a measure} $\mu$, or
simply {\it barycenter of} $\mu$. 

\medskip

Let $Y$ be a $\cat$ space, and denote by $\isom(Y)$ the group of
isometries of $Y$. 
Let $\Gamma$ be a finitely generated group, and
$\rho\colon \Gamma\longrightarrow \isom(Y)$ a homomorphism.  
We call a map $f \colon \Gamma \longrightarrow Y$ 
$\rho$-{\it equivariant} if $f$ satisfies 
$f(\gamma \gamma')=\rho(\gamma)f(\gamma')$ for all 
$\gamma, \gamma'\in \Gamma$. 
Suppose that a $\Gamma$-invariant, symmetric random walk with
finite-support property on $\Gamma$ is given.  In other word, we are
given $\mu\colon \Gamma \times \Gamma \longrightarrow [0,1]$ with the
following properties: 
\begin{itemize}
 \item (finite-support property) 
 For any $\gamma \in \Gamma$, $\mu(\gamma, \gamma')= 0$ for
 all but finitely many $\gamma' \in \Gamma$, 
 \item  For any $\gamma \in \Gamma$, 
       $\displaystyle{\sum_{\gamma' \in \Gamma} \mu(\gamma, \gamma') = 1}$, 

 \item ($\Gamma$-invariance) 
 $\mu(\gamma \gamma', \gamma \gamma'')=\mu(\gamma',\gamma'')$
 for any $\gamma$, $\gamma'$, and $\gamma'' \in \Gamma$,
 \item (symmetry) $\mu(\gamma, \gamma')=\mu(\gamma',\gamma)$ for any 
        $\gamma, \gamma' \in \Gamma$. 
\end{itemize}
Then we define the {\it energy} $E_{\mu}(f)$ of a
$\rho$-equivariant map $f$ by 
\begin{equation}
 E_{\mu}(f)= \frac{1}{2} \sum_{\gamma' \in \Gamma} 
 \mu(\gamma,\gamma')d_Y(f(\gamma),f(\gamma'))^2. 
\end{equation}
Note that, due to the $\Gamma$-invariance of $\mu(\cdot,\cdot)$ and 
the $\rho$-equivariance of $f$, 
this value does not depend on the choice of $\gamma$. 
A $\rho$-equivariant map $f$ is said to be {\it harmonic} if $f$
minimizes $E_{\mu}$ among all $\rho$-equivariant maps. 
Note that the image of a $\rho$-equivariant map 
$f \colon \Gamma \longrightarrow Y$ is the $\rho(\Gamma)$-orbit of the
point $f(e)$, and $f$ is determined by the choice of $f(e)$. Therefore,
the set of all $\rho$-equivariant maps from $\Gamma$ to $Y$ can be
identified with $Y$.  Then the energy functional $E_{\mu}$ becomes a
convex continuous function on $Y$, since $Y$ is a $\cat$ space. 
Let $-\mathrm{grad}E_{\mu}(f)$ be the negative gradient of the
energy functional $E_{\mu}$ at $f$. When $Y$ is a Riemannian manifold
this should be understood as the negative of the ordinary gradient. In
general, one can give a reasonable definition of
$-\mathrm{grad}E_{\mu}(f)$ as an element of the tangent cone of $Y$, 
identified with the space of $\rho$-equivariant maps, at a point
$f$. In fact, $-\mathrm{grad}E_{\mu}(f)$ should be 
$2(-\Delta f(e))$, where $-\Delta f(e)$ is the barycenter of 
$\{F_{f(e)}(\gamma)=\pi_{f(e)}\circ f(\gamma) \mid \mu(e,\gamma)\not=
0\}$ 
with respect to a measure 
$\sum_{\gamma}\mu(e,\gamma)\mathrm{Dirac}_{F_{f(e)}(\gamma)}$. 
The following proposition gives a sufficient 
condition for the existence of a fixed point of $\rho(\Gamma)$ in terms
of the energy functional. 

\begin{Proposition}[{\cite[\S 1]{Izeki-Kondo-Nayatani1}}, see
 also {\cite[\S 3]{Gromov3}}]
\label{gradient}
 Let $\Gamma$ be a finitely generated group equipped with a
 $\Gamma$-invariant, finitely supported, symmetric random walk. Let $Y$ 
 be a $\cat$ space and $\rho \colon \Gamma \longrightarrow \isom(Y)$ a
 homomorphism. Suppose there is a positive constant $C$ such that
 $|-\Delta f(e)|^2 \geq C E_{\mu}(f)$ holds for every
 $\rho$-equivariant map $f$. Then there is a $\rho$-equivariant map
 $f_0$ such that $E_{\mu}(f_0)=0$. 
\end{Proposition}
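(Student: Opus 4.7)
Via the bijection $f \leftrightarrow f(e)$ I identify the set of $\rho$-equivariant maps with $Y$, so that $E_{\mu}$ becomes a convex continuous function $\Psi \colon Y \to [0,\infty)$. The paper's formula $-\mathrm{grad}\,E_{\mu}(f) = 2(-\Delta f(e))$ translates the hypothesis into the gradient lower bound $|\mathrm{grad}\,\Psi(q)|^{2} \geq 4C\,\Psi(q)$ for every $q \in Y$. The strategy is to produce a zero of $\Psi$ by implicit-Euler (Moreau--Yosida) gradient descent: fix a step size $\tau>0$ and an arbitrary starting point $q_0 \in Y$, and set inductively
$$q_{n+1} = \mathop{\mathrm{argmin}}_{q \in Y} \Bigl\{ \Psi(q) + \tfrac{1}{2\tau}\,d_Y(q, q_n)^{2} \Bigr\},$$
whose existence and uniqueness follow from strict convexity of the functional together with Proposition~\ref{barycenter}. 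Comparing with the choice $q=q_n$ gives the one-step inequality
$$\Psi(q_{n+1}) + \tfrac{1}{2\tau}\,d_Y(q_n, q_{n+1})^{2} \leq \Psi(q_n),$$
and a $\cat$ first-variation computation at the minimizer (combining the variational formula for $d_Y^{2}$ with the characterization of $-\Delta f(e)$ as a barycenter in the tangent cone) yields the proximal identity $d_Y(q_n, q_{n+1}) = \tau\,|\mathrm{grad}\,\Psi(q_{n+1})|$.

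Invoking the hypothesis at $q_{n+1}$ then gives
$$\Psi(q_{n+1}) \leq \Psi(q_n) - \tfrac{\tau}{2}\,|\mathrm{grad}\,\Psi(q_{n+1})|^{2} \leq \Psi(q_n) - 2\tau C\,\Psi(q_{n+1}),$$
so $\Psi(q_{n+1}) \leq (1 + 2\tau C)^{-1}\,\Psi(q_n)$ and hence $\Psi(q_n)$ decays geometrically. On the other hand, Jensen's inequality applied to the definition of $-\Delta f_n(e)$ as a barycenter in the tangent cone gives
$$|-\Delta f_n(e)|^{2} \leq \sum_{\gamma \in \Gamma} \mu(e,\gamma)\,d_Y(f_n(e), f_n(\gamma))^{2} = 2\,\Psi(q_n),$$
so $|\mathrm{grad}\,\Psi(q_n)|^{2} \leq 8\,\Psi(q_n)$ also decays geometrically, and by the proximal identity the displacements $d_Y(q_n, q_{n+1})$ are geometrically summable. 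Therefore $\{q_n\}$ is Cauchy in $Y$; by completeness it converges to some $q_\infty \in Y$, and by continuity of $\Psi$ the limit satisfies $\Psi(q_\infty) = 0$. The corresponding $\rho$-equivariant map $f_0$ then satisfies $E_{\mu}(f_0) = 0$, as required.

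\textbf{Main obstacle.} The technical crux is the $\cat$ first-variation identity $d_Y(q_n, q_{n+1}) = \tau\,|\mathrm{grad}\,\Psi(q_{n+1})|$. Because $-\Delta f(e)$ lives in the tangent cone $TC_{f(e)}Y$, which is not a vector space in general, the ``gradient of $\Psi$'' has meaning only through directional derivatives along geodesics; one must deduce the identity from the extremal property of $q_{n+1}$ by combining the first-variation formula for each summand $d_Y(\cdot, \rho(\gamma)\cdot)^{2}$ of $E_{\mu}$ with the variational characterization of the barycenter. Once this identity is in hand, the remaining steps (geometric decay, the Jensen upper bound, the Cauchy property, and the passage to the limit) are routine.
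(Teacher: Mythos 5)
Your overall strategy --- identify $E_{\mu}$ with a convex function $\Psi$ on $Y$ via $q=f(e)$, run a descent scheme, get geometric decay of the energy from the gradient lower bound, deduce summability of the displacements, and pass to the Cauchy limit --- is exactly the shape of the argument behind this proposition (the paper does not prove it but cites \cite{Izeki-Kondo-Nayatani1}, where the \emph{explicit} scheme is used: $f(e)$ is replaced at each step by the barycenter in $Y$ of $\{f(\gamma)\mid\mu(e,\gamma)\neq 0\}$, and the key inequality compares $E_{\mu}$ of the old and new maps directly). Your implicit (proximal) variant could in principle be made to work, but as written it has a genuine gap at precisely the step you flag: the ``proximal identity'' $d_Y(q_n,q_{n+1})=\tau\,|\mathrm{grad}\,\Psi(q_{n+1})|$ is asserted, not proved, and it is the only place where the hypothesis enters the argument. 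As an equality it should not be expected in a singular $\cat$ space; what your chain of inequalities actually needs is the one-sided bound $d_Y(q_n,q_{n+1})\geq 2\tau\,|{-\Delta f_{n+1}(e)}|$. Establishing that bound is not a routine first variation. First, $\Psi(q)=\tfrac12\sum_\gamma\mu(e,\gamma)d_Y(q,\rho(\gamma)q)^2$ moves \emph{both} endpoints of each distance as $q$ varies, so the directional derivative $D_V\Psi$ only reduces to $-2\sum_\gamma\mu(e,\gamma)\langle V,\pi_q(f(\gamma))\rangle$ after invoking the symmetry of $\mu$ and the equivariance of $f$ to recombine the two contributions. Second, and more seriously, one must pass from directional derivatives to the norm of the barycenter: the pairing $\langle\cdot,\cdot\rangle$ on $TC_qY$ is not bilinear, so $\sum_\gamma\mu(e,\gamma)\langle V,\pi_q(f(\gamma))\rangle$ is not $\langle V,-\Delta f(e)\rangle$. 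One needs the variational identity $|b|^2=\sum_\gamma\mu(e,\gamma)\langle b,\pi_q(f(\gamma))\rangle$ for the barycenter $b=-\Delta f(e)$ (obtained by scaling $b$ radially in the cone), the choice $V=b/|b|$, and the fact that this direction is a limit of genuine geodesic directions, to get $\sup_V(-D_V\Psi)\geq 2|b|$; minimality of $q_{n+1}$ gives $-D_V\Psi\leq d_Y(q_n,q_{n+1})/\tau$ for every direction, and only the combination yields the bound you use. None of this is in your write-up.

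Two smaller points. Existence and uniqueness of $q_{n+1}$ does not follow from Proposition~\ref{gradient}'s companion Proposition~\ref{barycenter}, which minimizes $\int d(p,\cdot)^2\,d\mu(p)$ with the points $p$ fixed, whereas your functional contains the terms $d_Y(q,\rho(\gamma)q)^2$ in which both arguments move; you need the standard fact that a continuous, strongly convex function on a complete $\cat$ space attains a unique minimum, after checking that $q\mapsto d_Y(q,\rho(\gamma)q)^2$ is convex (it is, since displacement functions of isometries of $\cat$ spaces are convex). Also, if some $q_{n+1}=q_n$ the recursion stalls, and you must argue separately that a fixed point of the proximal map has $|{-\Delta f(e)}|=0$ and hence, by hypothesis, $E_{\mu}(f)=0$; this again requires the slope comparison above. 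The explicit barycenter step of the cited proof avoids the proximal identity altogether, which is why it is the standard route.
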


Recall that $\mu$ is called {\it irreducible} if $\mu$ satisfies the 
following: for any $\gamma$, $\gamma'\in \Gamma$, there exists 
$\gamma_0,\gamma_1, \dots, \gamma_n \in \Gamma$ such that
$\gamma=\gamma_0$, $\gamma'=\gamma_n$, and
$\mu(\gamma_i,\gamma_{i+1})\not=0$, $i=0,\dots, n-1$. 
If $E_{\mu}(f_0)=0$, then $d(f(\gamma), f(\gamma'))=0$ for any 
pair $\gamma, \gamma'$ whenever there exists a sequence 
$\gamma_0, \gamma_1, \dots, \gamma_n \in \Gamma$ as above. 
Therefore, if $\mu$ is irreducible, then $f_0$ with $E_{\mu}(f_0)=0$ 
must be a constant map, and the image of $f_0$ is a fixed-point of the
action of $\rho(\Gamma)$. 

Let $\mu$ be the transition probability of the standard random walk on
$\Gamma$ with generating set 
$S=\{s_1, \dots, s_m, s_1^{-1}, \dots, s_m^{-1}\}$, namely, 
\begin{equation*}
 \mu(\gamma, \gamma')
  = \frac{\# \{s \in S \mid \gamma'=\gamma s\}}{2m}.
\end{equation*}
We note that $s \in S$ may become the identity element in $\Gamma$, and
if this is the case, then $\mu(\gamma,\gamma)\not= 0$ for any 
$\gamma \in \Gamma$. It is clear that so defined $\mu$ satisfies the
four properties listed above and the irreducibility. 

Denote by $\mu^n$ the $n$th convolution of $\mu$:
\begin{equation*}
 \mu^n(\gamma, \gamma')= \sum_{\gamma_1 \in \Gamma} 
 \dots \sum_{\gamma_{n-1} \in \Gamma} \mu(\gamma, \gamma_1) \dots
 \mu(\gamma_{n-1},\gamma'). 
\end{equation*}
We define {\it $n$-step energy} $E_{\mu^n}(f)$ of a $\rho$-equivariant
map $f$ by
\begin{equation*}
 E_{\mu^n}(f)= \frac{1}{2} 
 \sum_{\gamma' \in \Gamma}\mu^n(\gamma,\gamma')
 d_Y(f(\gamma),f(\gamma'))^2.
\end{equation*}
Since $\mu^n$ is $\Gamma$-invariant as $\mu$, again this does not depend
on the choice of $\gamma$.  

In \cite{Gromov3}, Gromov gave an interesting criterion for an action of
$\Gamma$  on $Y$ to have a fixed point in terms of the ratio of
$E_{\mu}(f)$ and $E_{\mu^n}(f)$ (see also \cite{Izeki-Kondo-Nayatani2}
and \cite{Silberman}).  We need a variant of this criterion, stated as
Theorem \ref{thm:n-step_ineq} below, in order to prove our main theorems. 

A slight modification of
the proof of \cite[Lemma 2.7(2)]{Izeki-Kondo-Nayatani2} gives the
following lemma, which we state in a way convenient for our use.  

\begin{Lemma}
 \label{lem:energy_decomposition}
 Let $\Gamma$ be a finitely generated group equipped with the standard
 random walk, and $Y$ a $\cat$ space. 
 Let $n$, $k$, $l$ be natural numbers satisfying $n \geq kl$.  Then, 
 for any natural number $i \leq 2k$, we have
 \begin{equation}
 \label{eq:energy_ineq}
  E_{\mu^{2n-il}}(f) \leq (2k-i+1)^2
   \left(E_{\mu^l}(f) + E_{\mu^{2n-2kl}}(f)\right)  
 \end{equation}
 and
 \begin{equation*}
  \left| -\Delta_{2n-il}f(e)\right|
   \leq (2k-i+1)
    \sqrt{2\left(E_{\mu^l}(f) + E_{\mu^{2n-2kl}}(f)\right)}, 
 \end{equation*}
where $-\Delta_j f(e)$ is the barycenter of 
$\{F_{f(e)}(\gamma)=\pi_{f(e)}\circ f(\gamma) \mid 
\mu^j (e,\gamma)\not= 0\}$ 
with respect to a measure 
$\sum_{\gamma}\mu^j(e,\gamma)\mathrm{Dirac}_{F_{f(e)}(\gamma)}$. 
\end{Lemma}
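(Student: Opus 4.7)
The plan is to decompose a $(2n-il)$-step walk into $(2k-i)$ length-$l$ segments followed by one length-$(2n-2kl)$ segment; the identity $(2k-i)l+(2n-2kl)=2n-il$ together with the hypothesis $n\geq kl$ makes this decomposition meaningful, and the Markov property gives
\begin{equation*}
\mu^{2n-il}(e,\gamma')=\sum_{\gamma_1,\dots,\gamma_{2k-i}}\mu^l(e,\gamma_1)\mu^l(\gamma_1,\gamma_2)\cdots\mu^l(\gamma_{2k-i-1},\gamma_{2k-i})\,\mu^{2n-2kl}(\gamma_{2k-i},\gamma').
\end{equation*}

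For each resulting path $e=\gamma_0,\gamma_1,\dots,\gamma_{2k-i},\gamma_{2k-i+1}=\gamma'$, the triangle inequality in $Y$ combined with the Cauchy--Schwarz inequality (for the $(2k-i+1)$-term sum of side lengths) yields
\begin{equation*}
d_Y(f(e),f(\gamma'))^2\leq (2k-i+1)\sum_{j=1}^{2k-i+1}d_Y(f(\gamma_{j-1}),f(\gamma_j))^2.
\end{equation*}
Weighting each such path by the corresponding product of transition probabilities, summing over all intermediate $\gamma_j$ and over $\gamma'$, and then telescoping via $\rho$-equivariance of $f$ and the $\Gamma$-invariance of $\mu^l$ and $\mu^{2n-2kl}$, each of the first $(2k-i)$ summands collapses to $2E_{\mu^l}(f)$ and the last summand collapses to $2E_{\mu^{2n-2kl}}(f)$. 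Thus
\begin{equation*}
2E_{\mu^{2n-il}}(f)\leq 2(2k-i+1)\bigl[(2k-i)E_{\mu^l}(f)+E_{\mu^{2n-2kl}}(f)\bigr],
\end{equation*}
and \eqref{eq:energy_ineq} follows from the elementary estimate $(2k-i)A+B\leq (2k-i+1)(A+B)$ valid for $A,B\geq 0$.

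For the gradient bound, let $p_0=-\Delta_{2n-il}f(e)\in TC_{f(e)}Y$ denote the barycenter of the finitely supported measure $\sum_\gamma \mu^{2n-il}(e,\gamma)\,\mathrm{Dirac}_{F_{f(e)}(\gamma)}$. Applying the variance inequality for barycenters in the $\cat$ space $TC_{f(e)}Y$ with the test point equal to the origin $0_{f(e)}$, and using $|F_{f(e)}(\gamma)|=d_Y(f(e),f(\gamma))$, one gets
\begin{equation*}
|{-\Delta_{2n-il}f(e)}|^2\leq\sum_\gamma\mu^{2n-il}(e,\gamma)\,|F_{f(e)}(\gamma)|^2=2E_{\mu^{2n-il}}(f),
\end{equation*}
and combining this with \eqref{eq:energy_ineq} and taking square roots yields the second inequality. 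The main technical subtlety lies in the telescoping argument: one must reduce each intermediate sum to a single $l$-step (or $(2n-2kl)$-step) energy, integrating out one variable at a time using $\Gamma$-invariance and $\rho$-equivariance, which requires careful bookkeeping of indices. This is essentially the computation carried out in \cite[Lemma 2.7(2)]{Izeki-Kondo-Nayatani2}, so no conceptual difficulty beyond notation is expected.
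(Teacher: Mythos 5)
Your proposal is correct and follows essentially the same route as the paper: the same convolution decomposition $\mu^{2n-il}=(\mu^l)^{2k-i}\ast\mu^{2n-2kl}$, the same triangle-plus-Cauchy--Schwarz bound on $d_Y(f(e),f(\gamma'))^2$, and the same telescoping via $\Gamma$-invariance and equivariance, ending with $(2k-i)A+B\leq(2k-i+1)(A+B)$. The only cosmetic difference is that for the gradient bound the paper simply cites $|-\Delta_{2n-il}f(e)|\leq\sqrt{2E_{\mu^{2n-il}}(f)}$ from \cite[Lemma 2.7(1)]{Izeki-Kondo-Nayatani2}, whereas you rederive it from the barycenter variance inequality applied at the origin of $TC_{f(e)}Y$ --- which is exactly the content of that cited lemma.
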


\begin{proof}
Noting $\mu^{2n-il}=(\mu^l)^{2k-i} \ast \mu^{2n-2kl}$, we get
\begin{equation}\label{computation3}
  \begin{split}
 & E_{\mu^{2n-il}}(f)  \\
  = & \frac{1}{2}\sum_{\gamma_1,\dots,\gamma_{2k-i+1}}
  \mu^l(e,\gamma_1) \cdots \mu^l(\gamma_{2k-i-1}, \gamma_{2k-i})
    \mu^{2n-2kl}(\gamma_{2k-i},\gamma_{2k-i+1}) \\
  & \times d_Y(f(e),f(\gamma_{2k-i+1}))^2 \\
  \leq & \frac{1}{2}\sum_{\gamma_1,\dots \gamma_{2k-i+1}}
   \mu^l(e,\gamma_1) \cdots
    \mu^l(\gamma_{2k-i-1}, \gamma_{2k-i})
    \mu^{2n-2kl}(\gamma_{2k-i},\gamma_{2k-i+1}) \\
  & \times (2k-i+1)(d_Y(f(e),f(\gamma_1))^2+ \dots + 
       d_Y(f(\gamma_{2k-i}),f(\gamma_{2k-i+1}))^2). 
  \end{split}
 \end{equation}
 Note that
 \begin{equation*}
  \begin{split}
   & \frac{1}{2}\sum_{\gamma_1, \dots, \gamma_{2k-i+1}}
    \mu^l(e,\gamma_1) \dots 
     \mu^{2n-2kl}(\gamma_{2k-i}, \gamma_{2k-i+1})
     d_Y(f(\gamma_{2k-i}),f(\gamma_{2k-i+1}))^2 \\
   =& \frac{1}{2}\sum_{\gamma_1, \dots, \gamma_{2k-1}}
      \mu^l(e,\gamma_1) \dots \mu^l(\gamma_{2k-i-1}, \gamma_{2k-i}) \\
    & \times \sum_{\gamma_{2k-i+1}}
     \mu^{2n-2kl}(\gamma_{2k-i}, \gamma_{2k-i+1})
     d_Y(f(\gamma_{2k-i}), f(\gamma_{2k-i+1}))^2 \\
   = & \sum_{\gamma_1, \dots, \gamma_{2k-i}}\mu^l(e,\gamma_1) \dots 
     \mu^l(\gamma_{2k-i-1},\gamma_{2k-i}) \times E_{\mu^{2n-2kl}}(f) \\
   = & E_{\mu^{2n-2kl}}(f), 
\end{split}
\end{equation*}
and, for $j< 2k-i$, 
 \begin{equation*}
  \begin{split}
   & \frac{1}{2}\sum_{\gamma_1, \dots, \gamma_{2k-i+1}}\mu^l(e,\gamma_1) \dots 
     \mu^{2n-kl}(\gamma_{2k-i}, \gamma_{2k-i+1})
     d_Y(f(\gamma_j),f(\gamma_{j+1}))^2 \\
   = & \frac{1}{2}\sum_{\gamma_1,\dots,\gamma_{2k-i}} 
     \mu^l(e,\gamma_1) \dots
     \mu^l(\gamma_j, \gamma_{j+1}) \dots 
     \mu^l(\gamma_{2k-i-1}, \gamma_{2k-i}) \\
     & \times \sum_{\gamma_{2k-i+1}}
     \mu^{2n-2kl}(\gamma_{2k-i}, \gamma_{2k-i+1}) 
     d_Y(f(e),f(\gamma_j^{-1}\gamma_{j+1}))^2 \\
   = & \frac{1}{2} \sum_{\gamma_1,\dots,\gamma_{2k-i-1}} 
     \mu^l(e,\gamma_1) \dots
     \mu^l(\gamma_j, \gamma_{j+1}) \dots 
     \mu^l(\gamma_{2k-i-2},\gamma_{2k-i-1})\\
     & \times \sum_{\gamma_{2k-i}} 
     \mu^l(\gamma_{2k-i-1}, \gamma_{2k-i}) 
     d_Y(f(\gamma_j), f(\gamma_{j+1}))^2 \\
   = & \frac{1}{2}\sum_{\gamma_1, \dots, \gamma_j} 
     \mu^l(e,\gamma_1) \dots
     \mu^l(\gamma_{j-1},\gamma_j) 
      \times\sum_{\gamma_{j+1}} 
      \mu^l(\gamma_j, \gamma_{j+1}) 
      d_Y(f(\gamma_j), f(\gamma_{j+1}))^2  \\  
   = & \sum_{\gamma_1} \dots \sum_{\gamma_{j}}
     \mu^l(e,\gamma_1) \dots
     \mu^l(\gamma_{j-1},\gamma_j)E_{\mu^l}(f) \\
   = & E_{\mu^l}(f)
  \end{split}
 \end{equation*}
 holds. 
 Together with \eqref{computation3}, this implies (\ref{eq:energy_ineq}):
\begin{equation*}
 \begin{split}
   E_{\mu^{2n-il}}(f) 
  & \leq (2k-i+1)\left((2k-i) E_{\mu^l}(f)+ E_{\mu^{2n-2kl}}(f)\right) \\
  & \leq (2k-i+1)^2 \left(E_{\mu^l}(f)+E_{\mu^{2n-2kl}}(f)\right). 
 \end{split}
\end{equation*}
 Since 
 $|-\Delta_{2n-il} f(e)| \leq \sqrt{2E_{\mu^{2n-il}}(f)}$ follows
 from \cite[Lemma 2.7(1)]{Izeki-Kondo-Nayatani2}, combining with 
 (\ref{eq:energy_ineq}) implies the other inequality. 
\end{proof}

The following is a
variant of \cite[Proposition 2.8]{Izeki-Kondo-Nayatani2}. 
\begin{Proposition}
 Let $\Gamma$ be a finitely generated group equipped with the standard
 random walk, and $Y$ a $\cat$ space. 
 If natural numbers $n$, $k$ and $l$ satisfy $n \geq kl$, we have
\begin{equation}
\label{eq:n-step_ineq}
 \begin{split}
 E_{\mu^{2n}}(f) \geq & E_{\mu^{2n-2kl}}(f) + 2k E_{\mu^l}(f) \\
   & - \sum_{i=1}^{2k} (2k-i+1)
       \sqrt{2\left(E_{\mu^l}(f)+E_{\mu^{2n-2kl}}(f)\right)}
   \left| -\Delta_l f(e)\right|.
 \end{split}
\end{equation}
\end{Proposition}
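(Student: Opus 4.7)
The plan is to write $\mu^{2n}=(\mu^l)^{2k}\ast\mu^{2n-2kl}$ so that $E_{\mu^{2n}}(f)$ becomes $\tfrac12$ times the average of $d_Y(f(e),f(\gamma_{2k+1}))^2$ along Markov chains $e=\gamma_0,\gamma_1,\ldots,\gamma_{2k+1}$ whose first $2k$ transitions use $\mu^l$ and whose last uses $\mu^{2n-2kl}$.  Setting $p_i=f(\gamma_i)$ and applying the $\cat$ law of cosines at each intermediate vertex $p_i$, $1\leq i\leq 2k$, to the pair $(p_{i-1},p_{2k+1})$ gives
\begin{equation*}
 d(p_{i-1},p_{2k+1})^2\geq d(p_{i-1},p_i)^2+d(p_i,p_{2k+1})^2
   -2\langle\overrightarrow{p_ip_{i-1}},\overrightarrow{p_ip_{2k+1}}\rangle_{TC_{p_i}Y}.
\end{equation*}
Telescoping these inequalities from $i=1$ up to $i=2k$ yields
\begin{equation*}
 d(p_0,p_{2k+1})^2\geq \sum_{i=0}^{2k}d(p_i,p_{i+1})^2
   -2\sum_{i=1}^{2k}\langle\overrightarrow{p_ip_{i-1}},\overrightarrow{p_ip_{2k+1}}\rangle_{TC_{p_i}Y}.
\end{equation*}

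Halving and averaging against the path weights, the diagonal sum contributes exactly $2kE_{\mu^l}(f)+E_{\mu^{2n-2kl}}(f)$, since by $\Gamma$-invariance of $\mu$ and $\rho$-equivariance of $f$ the $i$-th summand evaluates to $2E_{\mu^l}(f)$ (respectively $2E_{\mu^{2n-2kl}}(f)$ for $i=2k$) independently of the intermediate vertices.  This produces the two main summands on the right-hand side of \eqref{eq:n-step_ineq}, and leaves a correction term indexed by $i=1,\ldots,2k$ coming from the averaged tangent-cone inner products.

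For the $i$-th cross term, I would marginalize all $\gamma_j$ with $j\notin\{i-1,i,2k+1\}$, leaving the weight $\mu^{(i-1)l}(e,\gamma_{i-1})\mu^l(\gamma_{i-1},\gamma_i)\mu^{2n-il}(\gamma_i,\gamma_{2k+1})$, with $2n-il=(2k-i)l+(2n-2kl)$ the total step count from $p_i$ to $p_{2k+1}$.  Pulling back by $\rho(\gamma_i^{-1})$ identifies $TC_{p_i}Y$ isometrically with $TC_{f(e)}Y$ and carries the inner product to one of the form $\langle F_{f(e)}(\sigma),F_{f(e)}(\eta)\rangle$; using the symmetry of $\mu^l$ and summing out $\gamma_i$ reduces the cross term to a double sum against $\mu^l(e,\sigma)\mu^{2n-il}(e,\eta)$.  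Recognizing the $\sigma$-average as being controlled by the barycenter $-\Delta_lf(e)$ gives the upper bound $|-\Delta_lf(e)|\cdot\sqrt{2E_{\mu^{2n-il}}(f)}$, and Lemma~\ref{lem:energy_decomposition} then bounds the last factor by $(2k-i+1)\sqrt{2(E_{\mu^l}(f)+E_{\mu^{2n-2kl}}(f))}$, matching the $i$-th summand of the error in \eqref{eq:n-step_ineq}.

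The main technical subtlety lies in that last extraction step: the tangent-cone inner product $\langle V,W\rangle=|V||W|\cos\angle(V,W)$ is bilinear under scalar multiplication but \emph{not} under addition in a general $\cat$ space, so the Hilbert-space identity $\sum_\sigma\mu^l(e,\sigma)\langle F_{f(e)}(\sigma),W\rangle=\langle-\Delta_lf(e),W\rangle$ is not available.  The substitute is the Jensen/variance inequality for barycenters in a $\cat$ space, which gives a one-sided bound in terms of $|-\Delta_lf(e)|\cdot|W|$ and in effect replaces the crude factor $\sqrt{2E_{\mu^l}(f)}$ that a direct Cauchy-Schwarz on $\sum\mu^l|F_{f(e)}(\sigma)||W|$ would produce by the sharper $|-\Delta_lf(e)|$; combined with $\rho$-equivariance of $f$ and Lemma~\ref{lem:energy_decomposition} this yields \eqref{eq:n-step_ineq} summand by summand.
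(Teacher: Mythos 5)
Your argument is correct and is essentially the paper's proof with the cited input unpacked: the paper obtains the telescoped inequality by applying \cite[Proposition 2.5]{Izeki-Kondo-Nayatani2} $2k$ times and then finishes with Cauchy--Schwarz on the tangent-cone inner product and Lemma~\ref{lem:energy_decomposition}, while you re-derive that cited proposition directly via the law of cosines at each intermediate vertex, the reduction of the $i$-th cross term to the product measure $\mu^l(e,\cdot)\otimes\mu^{2n-il}(e,\cdot)$ (which is valid, since the integrand depends only on the increments $\sigma=\gamma_i^{-1}\gamma_{i-1}$ and $\tau=\gamma_i^{-1}\gamma_{2k+1}$, whose joint law is that product), and the one-sided barycenter inequality $\sum_\sigma\mu^l(e,\sigma)\langle F_{f(e)}(\sigma),W\rangle\le\langle -\Delta_l f(e),W\rangle$, which does hold --- apply the variance inequality for the barycenter in the tangent cone at the comparison points $tW$ and let $t\to\infty$. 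The only deviation is that you bound the $i$-th cross term by $\left|-\Delta_l f(e)\right|\sqrt{2E_{\mu^{2n-il}}(f)}$ via Cauchy--Schwarz in $\tau$ rather than by $\left|-\Delta_l f(e)\right|\left|-\Delta_{2n-il}f(e)\right|$ as the paper does, but inequality \eqref{eq:energy_ineq} of Lemma~\ref{lem:energy_decomposition} yields the same final factor $(2k-i+1)\sqrt{2\left(E_{\mu^l}(f)+E_{\mu^{2n-2kl}}(f)\right)}$ either way.
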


\begin{proof}
 By using \cite[Proposition 2.5]{Izeki-Kondo-Nayatani2} $2k$ times, we
 obtain 
\begin{equation*}
 \begin{split}
  E_{\mu^{2n}}(f) 
  & \geq E_{\mu^{2n-l}}(f) + E_{\mu^l}(f) 
  - \langle -\Delta_{2n-l} f(e), -\Delta_{l}f(e) \rangle \\
  & \geq \left(E_{\mu^{2n-2l}}(f) + E_{\mu^l}(f)
    - \langle -\Delta_{2n-2l} f(e), -\Delta_{l}f(e) \rangle \right)\\
  & \phantom{\geq} +E_{\mu^l}(f)- 
    \langle -\Delta_{2n-l} f(e), -\Delta_{l}f(e) \rangle \\
  & \vdots \\
  & \geq E_{\mu^{2n-2kl}}(f) + 2kE_{\mu^l}(f) 
  - \sum_{i=1}^{2k} \langle -\Delta_{2n-il} f(e), -\Delta_l f(e) \rangle \\
  & \geq E_{\mu^{2n-2kl}}(f) + 2kE_{\mu^l}(f) 
  - \sum_{i=1}^{2k} |-\Delta_{2n-il} f(e)|| -\Delta_l f(e)|. 
 \end{split}
\end{equation*}
Now Lemma \ref{lem:energy_decomposition} implies the desired
 inequality. 
\end{proof}
\begin{Theorem}
\label{thm:n-step_ineq}
 Let $\Gamma$ be a finitely generated group equipped with the standard
 random walk, and $Y$ a $\cat$ space. 
 Suppose that natural numbers $n$, $k$ and $l$ satisfy $n \geq kl$, and
 that there exists $\varepsilon>0$ such that, for any $\rho$-equivariant
 map $f$, the following inequality holds$:$
 \begin{equation*}
  E_{\mu^{2n}}(f) \leq (2k-\varepsilon) E_{\mu^l}(f). 
 \end{equation*}
 Then there is a $\rho$-equivariant map $f_0$ with $E_{\mu^l}(f_0)=0$. 
 Moreover,  $\rho(\Gamma)$ admits a fixed point. 
\end{Theorem}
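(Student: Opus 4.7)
The plan is to combine the lower bound on $E_{\mu^{2n}}(f)$ supplied by inequality (\ref{eq:n-step_ineq}) with the hypothesis $E_{\mu^{2n}}(f) \leq (2k-\varepsilon) E_{\mu^l}(f)$ to extract a gradient-type bound $|-\Delta_l f(e)|^2 \geq C\, E_{\mu^l}(f)$ for some positive $C$ independent of $f$, and then to apply Proposition \ref{gradient} to the random walk $\mu^l$ in place of $\mu$.

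Abbreviating $E := E_{\mu^l}(f)$, $E' := E_{\mu^{2n-2kl}}(f) \geq 0$, and $G := |-\Delta_l f(e)|$, I would combine the two inequalities (using $\sum_{i=1}^{2k}(2k-i+1) = k(2k+1)$) into
\[
 k(2k+1)\,\sqrt{2(E+E')}\; G \;\geq\; E' + \varepsilon E.
\]
Assuming $E > 0$ (the case $E = 0$ is trivial) and setting $t := E'/E \geq 0$, squaring both sides reduces the question to a uniform lower bound for $(t+\varepsilon)^2/(1+t)$ on $[0,\infty)$. Its derivative is a nonnegative multiple of $(t+\varepsilon)(2+t-\varepsilon)$, which is nonnegative for all $t \geq 0$ provided $0 < \varepsilon < 2$ (which we may freely assume), so the infimum $\varepsilon^2$ is attained at $t = 0$. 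This yields
\[
 G^2 \;\geq\; \frac{\varepsilon^2}{2\,(k(2k+1))^2}\; E.
\]
Since $\mu^l$ is itself $\Gamma$-invariant, symmetric, and finitely supported, Proposition \ref{gradient} applied to $\mu^l$ produces a $\rho$-equivariant map $f_0$ with $E_{\mu^l}(f_0) = 0$.

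For the fixed-point assertion, $E_{\mu^l}(f_0) = 0$ forces $\rho(\gamma) f_0(e) = f_0(e)$ for every $\gamma$ in the support of $\mu^l(e,\cdot)$; feeding this back into the hypothesis also gives $E_{\mu^{2n}}(f_0) = 0$, so the same fixing holds on the support of $\mu^{2n}(e,\cdot)$. The subgroup generated by these two supports is either all of $\Gamma$---in which case $f_0(e)$ is fixed by $\rho(\Gamma)$---or the index-two parity subgroup $H$ (the kernel of the homomorphism $\Gamma \to \mathbb{Z}/2\mathbb{Z}$ sending each generator to $1$), which is normal. In the latter case each $\rho(s)$, $s \in S$, acts as an involution on the nonempty closed convex set $\mathrm{Fix}(H) \subset Y$, and the midpoint of $p$ and $\rho(s) p$ for any $p \in \mathrm{Fix}(H)$ is fixed both by $H$ and by $\rho(s)$, producing a $\rho(\Gamma)$-fixed point.

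The main obstacle is the optimization in the first step: the residual term $E_{\mu^{2n-2kl}}(f)$ appears simultaneously inside the square root and as an additive contribution on the right-hand side, and one must check that their interaction still yields a linear lower bound of $G^2$ in $E$, with the infimum realized at $E' = 0$. The parity step required to promote $E_{\mu^l}(f_0) = 0$ to a genuine $\rho(\Gamma)$-fixed point is a standard midpoint argument.
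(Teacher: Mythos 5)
Your proposal is correct and follows essentially the same route as the paper: combine inequality (\ref{eq:n-step_ineq}) with the hypothesis to get a uniform bound $|-\Delta_l f(e)|^2 \geq C\,E_{\mu^l}(f)$, invoke Proposition \ref{gradient} for the random walk $\mu^l$, and then resolve the parity issue by a midpoint argument on the index-$\le 2$ even subgroup. The only cosmetic difference is that you obtain the constant by minimizing $(t+\varepsilon)^2/(1+t)$, whereas the paper simply uses $\varepsilon E + E' \geq \varepsilon(E+E')$ after assuming $\varepsilon \leq 1$; both yield the same bound.
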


\begin{proof}
 By (\ref{eq:n-step_ineq}) and the assumption, we get
\begin{equation*}
 \begin{split}
 (2k-\varepsilon)E_{\mu^l}(f)
 \geq & E_{\mu^{2n-2kl}}(f) + 2k E_{\mu^l}(f) \\
   & - \sum_{i=1}^{2k} (2k-i+1)
     \sqrt{2\left(E_{\mu^l}(f)+E_{\mu^{2n-2kl}}(f)\right)}
   \left| -\Delta_l f(e)\right|, 
 \end{split}
\end{equation*}
and hence
\begin{equation*}
 \begin{split}
 & c(k) \left|-\Delta_l f(e)\right| 
 \sqrt{E_{\mu^l}(f)+E_{\mu^{2n-2kl}}(f)} \\
 \geq & \varepsilon E_{\mu^l}(f) + E_{\mu^{2n-2kl}}(f) \\  
 \geq & \varepsilon\left(
    E_{\mu^l}(f) + E_{\mu^{2n-2kl}}(f)
   \right), 
 \end{split}
\end{equation*}
since we may assume $\varepsilon \leq 1$.  Hence
\begin{equation*}
 \begin{split}
  \left| -\Delta_l f(e)\right|
  & \geq \frac{\varepsilon}{c(k)} 
       \sqrt{E_{\mu^l}(f) + E_{\mu^{2n-2kl}}(f)} \\
  & \geq \frac{\varepsilon}{c(k)} 
         \sqrt{E_{\mu^l}(f)}. 
 \end{split}
\end{equation*}
By Proposition \ref{gradient}, this implies the existence of $f_0$ with
 $E_{\mu^l}(f_0)=0$. 
 If $l$ is odd, then $\mu^l(e,s)\not= 0$ for any $s \in S$.  Thus
 $f_0(s)=f_0(e)$ for any $s \in S$.  Since $f_0$ is $\rho$-equivariant,
 this implies that $f_0(\gamma)=f_0(e)$ for any $\gamma \in \Gamma$,
 which means that $f_0$ is a constant map and $f_0(e)$ is a fixed point. 
 Suppose $l$ is even. Let $\Gamma'$ be a subgroup of $\Gamma$ generated
 by $S'=\{\gamma \in \Gamma \mid \mu^2(e,\gamma)\not= 0\}$.  
 Then $\mu^l(e,\gamma)\not= 0$ for any $\gamma \in S'$, and $f_0(e)$ is
 fixed by $\rho(\Gamma')$. On the other hand, 
\begin{equation*}
 \varphi(\gamma)=
 \begin{cases}
  0 & \text{if }\gamma 
      \text{ can be expressed by word with even length,} \\
  1 & \text{otherwise}
 \end{cases}
\end{equation*}
 defines a homomorphism $\varphi:\Gamma \longrightarrow \Z/2\Z$, and the 
 kernel is exactly $\Gamma'$.  Therefore, $\Gamma'$ is a subgroup of
 index at most $2$. Since $f_0(\Gamma')$ is a point and the image 
 $f_0(\Gamma)$ is the $\rho(\Gamma)$-orbit of $f_0(\Gamma')$, 
 $f_0(\Gamma)$ consists of at most two points. In other words, 
 we have an orbit consisting of at most two points.  If the orbit
 consists of a single point, then the point is a fixed point for the
 action of $\rho(\Gamma)$.  Suppose the orbit consists of two points. 
 In this case, the action $\rho(\Gamma)$ is simply a permutation
 of these two points; if $\gamma \in \Gamma'$,
 then $\rho(\gamma)$ must fix both points, while if 
 $\gamma \not\in \Gamma'$, then $\rho(\gamma)$ exchanges these two
 points.  Since the action of $\rho(\Gamma)$ is by isometry, their
 unique midpoint must be fixed by $\rho(\Gamma)$.  (The uniqueness of
 the midpoint follows from the uniqueness of a geodesic joining given
 two points, which is true because $Y$ is a $\cat$ space.)
 This completes the proof.  
\end{proof}

According to Theorem \ref{thm:n-step_ineq}, in order to prove the
fixed-point property of $\Gamma$ for $Y$, we need to give a bound on the
ratio $E_{\mu^{2n}}(f)/E_{\mu^l}(f)$, which is less than $2k$ and
independent of $\rho$ and $f$. 
The ratio $E_{\mu^{2n}}(f)/E_{\mu^l}(f)$ has an interesting connection
with the Rayleigh quotient of a map from a graph into $Y$ as we explain
below. 

Let $\Gamma$ be the free group generated by $S$: 
$\Gamma=\langle s_1 \rangle \ast \dots \ast \langle s_m \rangle$. 
Let $\Gamma_P$ be the group given by a presentation 
$P=(S,R) \in P(m, \eta, d)$. Let $G_P=(V,E_P)$ be the graph associated
to $P$ defined in \S \ref{sec:graph-assoc-pres}. 
Let $Y$ be a $\cat$ space with a metric $d=d_Y$, and 
$f \colon V \longrightarrow Y$ a map into $Y$. 
Then the {\it Rayleigh quotient} 
$RQ^G(f)$ of $f$, introduced by Gromov, is defined as 
\begin{equation*}
 RQ^G(f) = \frac{\Vert df \Vert^2_{L^2}}{F(f)}, 
\end{equation*}
where
\begin{equation*}
 \begin{cases}
   \Vert df \Vert^2_{L^2} = 
    \frac{1}{2}\sum_{v \in V} \sum_{v' \in V}
    \nu(v,v') d_Y(f(v),f(v'))^2 \\
    F(f) = \frac{1}{2}\sum_{v,v' \in V} 
      \nu(v) \nu(v') d_Y(f(v),f(v'))^2.
 \end{cases}
\end{equation*}
When $Y=\R$ this is nothing but the Rayleigh quotient introduced in 
\S \ref{sec:graph-assoc-pres}. For a general $\cat$ space $Y$, using the
triangle inequality, one sees that
\begin{equation*}
\begin{split} 
 F(f) & \leq  \frac{1}{2} \sum_{v,v' \in V} \nu(v)\nu(v')
        \left(2d_Y(\mathrm{bar}(f),f(v))^2
            +2d_Y(\mathrm{bar}(f), f(v'))^2\right) \\
    & = 2 \sum_{v\in V} d_Y(\mathrm{bar}(f), f(v)), 
\end{split}
\end{equation*}
where $\mathrm{bar}(f)$ is the barycenter of $f(V)$.  Let $\delta(Y)$ be
the invariant introduced in \cite[Definition 6.1]{Izeki-Nayatani}, and
$\lambda_1(G,Y)$ be Wang's invariant (see \cite{Wang2} or
\cite[Definition 4.2]{Izeki-Nayatani}).
Then by \cite[Proposition 6.3]{Izeki-Nayatani},
for any nonconstant map $f \colon V \longrightarrow Y$, we obtain
\begin{equation}
 \label{eq:delta_and_rayleigh_quotient}
 \begin{split}
   RQ^G(f) & \geq \frac{1}{2}
   \frac{\Vert df \Vert^2_{L^2}}{\sum_{v\in V} d_Y(\mathrm{bar}(f), f(v))} \\
  & \geq \frac{1}{2} \lambda_1(G,Y)
   \geq \frac{1}{2}(1-\delta(Y)) \lambda_1(G).
 \end{split}
\end{equation}

Let $\rho \colon \Gamma_P \longrightarrow \isom(Y)$ be a homomorphism,
and take any $\rho$-equivariant map 
$f \colon \Gamma_P \longrightarrow Y$. 
Then we can consider $\tilde \rho$-equivariant map 
$\tilde f =f \circ \pi_P \colon \Gamma \longrightarrow Y$, where 
$\pi_P$ denotes the projection from $\Gamma$ onto $\Gamma_P$ and 
$\tilde \rho = \rho \circ \pi_P \colon \Gamma \longrightarrow \isom(Y)$. 
Regarding $v \in V$ as an element 
$\pi_P(\overline{v}) \in \Gamma_P$ in
a natural way, we get a map $\overline{f} \colon V \longrightarrow Y$
induced from $f$, where $\overline{v}$ denotes the element in $\Gamma$
determined by a word $v$. 

Regarding a $\rho$-equivariant map as a map defined on $V$ as above, 
we will be able to show that the Rayleigh quotient approximates the
ratio of $l$-step and $2n$-step energies with high probability. 
\begin{Proposition}
\label{prop:random_Rayleigh_quotient}
 Fix a natural number $k$, and let $k/(2k+1)<d<1/2$.  
 Set $l$ and $n$ as described in {\rm (\ref{eq:k_and_l})}. 
 Then, for any $\varepsilon>0$, we have
 \begin{equation*}
  \lim_{\eta\to \infty}
  \frac{
   \# \left\{P \in P(m,\eta,d)
      \left\vert \ 
      \begin{matrix} 
       (\ref{eq:approx_ratio})
       \text{ holds for } \forall Y,\ 
       \forall \rho \colon \Gamma_P \longrightarrow \isom(Y), \\
       \forall 
       \text{nonconstant }\rho \text{-equivariant map } f
      \end{matrix}
      \right. 
       \right\}}
       {\# P(m,\eta,d)}=1, 
 \end{equation*}
 where 
 \begin{equation}
 \label{eq:approx_ratio}
  (1-\varepsilon)RQ^G(\overline{f}) 
  \leq \frac{E_{\mu^{l}}(\tilde f)}{E_{\mu^{2n}}(\tilde f)}
  \leq (1+\varepsilon)RQ^G(\overline{f}), 
 \end{equation}
 and $Y$ denotes a $\cat$ space. 
\end{Proposition}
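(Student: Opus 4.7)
The plan is to express both the energies on the group side and the Rayleigh-quotient quantities on the graph side as sums indexed by words, and then reduce the statement to a purely probabilistic concentration estimate for the random set $R\subset W_\eta$. Once the key counts are concentrated uniformly, the proposition follows term-by-term in the sums, uniformly over all $Y$, $\rho$, and $f$.

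First, using $\tilde\rho$-equivariance of $\tilde f$ and the identity $d_Y(\tilde f(\gamma_1),\tilde f(\gamma_2))=d_Y(\tilde f(e),\tilde f(\gamma_1^{-1}\gamma_2))$, together with the parametrization of $W_{2n}$ by concatenations $w_1w_2$ (setting $v_1=w_1^{-1}$, $v_2=w_2$), I would rewrite
\[
E_{\mu^l}(\tilde f)=\frac{1}{2(2m)^{l}}\sum_{u\in W_l}d_Y(\tilde f(e),\tilde f(\bar u))^2,\qquad
E_{\mu^{2n}}(\tilde f)=\frac{1}{2(2m)^{2n}}\sum_{v_1,v_2\in V}d_Y(\bar f(v_1),\bar f(v_2))^2.
\]
On the graph side, since $r=v_1uv_2^{-1}\in R$ forces $\pi_P(\bar u)=\pi_P(\bar v_1)^{-1}\pi_P(\bar v_2)$ in $\Gamma_P$, the two-step identity above yields $d_Y(\bar f(v_1),\bar f(v_2))=d_Y(\tilde f(e),\tilde f(\bar u))$ for every edge; expanding the definitions of $\nu(v_1,v_2)$ and $\nu(v)$ then gives
\[
\|d\bar f\|_{L^2}^2=\frac{1}{2\#R}\sum_{u\in W_l}N_u\,d_Y(\tilde f(e),\tilde f(\bar u))^2,\quad
F(\bar f)=\frac{1}{2}\sum_{v_1,v_2\in V}\nu(v_1)\nu(v_2)\,d_Y(\bar f(v_1),\bar f(v_2))^2,
\]
where $N_u:=\#\{(v_1,v_2)\in V\times V:v_1uv_2^{-1}\in R\}$.

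Comparing the two pairs of formulas shows that the inequalities (\ref{eq:approx_ratio}) will follow uniformly in $Y$, $\rho$, and $f$ from the two deterministic conditions
\[
(1-\varepsilon)\tfrac{\#R}{(2m)^l}\leq N_u\leq(1+\varepsilon)\tfrac{\#R}{(2m)^l}\quad(\forall u\in W_l),\qquad
(1-\varepsilon)(2m)^{-n}\leq\nu(v)\leq(1+\varepsilon)(2m)^{-n}\quad(\forall v\in V),
\]
because once the weights in the sums are pinched, any nonnegative function $u\mapsto d_Y(\tilde f(e),\tilde f(\bar u))^2$ inherits the same two-sided estimate when averaged. The proof therefore collapses to establishing these two concentration events with probability $1-o(1)$.

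To produce them, I would pass to the Bernoulli-$p$ model with $p=\#R/(2m)^\eta$, where the indicators $\mathbf{1}_{v_1uv_2^{-1}\in R}$ are independent, and transfer back to the fixed-size model $P(m,\eta,d)$ by conditioning on $\#R$ (the correction factor is bounded by a constant depending only on $c$). For each fixed $u\in W_l$, $N_u$ is a sum of $(2m)^{2n}$ independent indicators with mean $\mathbb{E}N_u=\#R/(2m)^l\geq c^{-1}(2m)^{d\eta-l}$, and a Chernoff bound gives $\Pr[|N_u-\mathbb{E}N_u|>\varepsilon\mathbb{E}N_u]\leq 2\exp(-c_0\varepsilon^2\mathbb{E}N_u)$; analogously for $\deg(v)=2\#R\cdot\nu(v)$ with mean $\geq 2c^{-1}(2m)^{d\eta-n}$. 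The assumption $d>k/(2k+1)$, together with $\eta\geq(2k+1)l$ and $n\leq kl+O(1)$ from (\ref{eq:k_and_l}), forces both $d\eta-l\geq((2k+1)d-1)l$ and $d\eta-n\geq((2k+1)d-k)l-O(1)$ to be positive and growing linearly in $l$, so the Chernoff tails decay doubly-exponentially in $l$ and easily absorb the union bounds over the $(2m)^l$ values of $u$ and the $(2m)^n=\#V$ values of $v$.

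The main obstacle will be the uniformity across the enormous index sets $W_l$ and $V$: I need to verify that the exponent $(2k+1)d-1$ produced by the hypothesis really beats the $l\log(2m)$ cost of the union bound over $u$, and that $(2k+1)d-k$ beats the analogous $n\log(2m)$ cost over $v$, for \emph{the same} choice of $k$. Once this exponent arithmetic is checked and the Bernoulli/fixed-size comparison is carefully bookkept, the two concentration events occur with probability $1-o(1)$ as $\eta\to\infty$, which delivers Proposition \ref{prop:random_Rayleigh_quotient}.
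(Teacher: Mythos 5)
Your proposal follows essentially the same route as the paper's proof: write each relation as $w_1uw_2^{-1}$, identify $\Vert d\overline f\Vert^2_{L^2}$ as a sum over $u\in W_l$ weighted by the counts $N_u$ (the paper's $X_{u,l}$) and $F(\overline f)$ as a sum over $V\times V$ weighted by $\nu(v_1)\nu(v_2)$, reduce (\ref{eq:approx_ratio}) to two-sided concentration of $N_u$ and of the degrees uniformly over $u$ and $v$, and finish with Chernoff plus union bounds, which the hypothesis $d>k/(2k+1)$ (i.e.\ $\varepsilon_0=d(2k+1)-k>0$) makes go through exactly as you indicate. One bookkeeping correction: the transfer from the Bernoulli model back to the fixed-size model is not a constant factor but an $O(\sqrt{\#R})=O((2m)^{d\eta/2})$ factor (cf.\ Lemma \ref{lem:relation_between_models}); this is still harmlessly absorbed by the $\exp\left(-c(2m)^{\varepsilon_0 l}\right)$ tails, and the paper instead sidesteps the issue under the assumption $d<1/2$ by using the with-replacement model $P'$, which coincides with the uniform fixed-size model outside an event of vanishing probability.
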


The proof will be given in \S \ref{proof_of_random_eigenvalue}. 

\begin{Remark}
\label{remark:d<1/2}
 We are assuming $d<1/2$ in order to regard 
 $P(m,\eta,d)$ as the probability space describing an experiment 
 given by choosing a word $c_0(2m)^{d\eta}$ times randomly from 
 $W_{\eta}$, where $c^{-1}\leq c_0 \leq c$. 
 However, the assumption $d<1/2$ is not essential as we will explain in 
 Remark \ref{remark:drop_d<1/2}. 
\end{Remark}

\section{Proofs of Theorem  \ref{thm:main_Hilbert} and \ref{thm:main}}
\label{sec:proof}

We first give the proof of Theorem \ref{thm:main}, assuming Proposition
\ref{prop:random_eigenvalue} and \ref{prop:random_Rayleigh_quotient} and
then point out a necessary change in order to show Theorem
\ref{thm:main_Hilbert}.  

Note that $1-\frac{1}{2}\log_{2m}(8m-4) < 1/2$ holds,  and that we have 
$RQ^G(f) \geq \lambda_1(G_P,Y)\geq \frac{1}{2}(1-\delta(Y))\lambda_1(G_P)
\geq \frac{1}{2}(1-\delta)\lambda_1(G_P)$
for any $Y \in \mathcal{Y}_{\delta}$ 
by (\ref{eq:delta_and_rayleigh_quotient}) and our assumption. 
By Propositions \ref{prop:random_eigenvalue} and
\ref{prop:random_Rayleigh_quotient}, we see that if 
$k/(2k+1)<d<1-\frac{1}{2}\log_{2m}(8m-4)$, with 
probability tending to $1$ as $\eta\to \infty$, $\Gamma_P$ satisfies 
\begin{equation}
\label{eq:energy_ratio1}
 \frac{E_{\mu^l}(\tilde f)}{E_{\mu^{2n}}(\tilde f)} \geq 
 \frac{1-\delta}{2}(1-\varepsilon)
\end{equation}
for any $Y \in \mathcal{Y}_{\delta}$, 
$\rho \colon \Gamma_P \longrightarrow \isom(Y)$, and 
$\tilde \rho$-equivariant map $\tilde f$, where 
$\tilde \rho=\rho \circ \pi_P \colon \Gamma \longrightarrow \isom(Y)$. 
Rewriting this, we see that, for any $\varepsilon'>0$, 
$P \in P(m,\eta,d)$ satisfies the following property with high
probability, that is, probability tending to $1$ as $\eta\to \infty$:  
For any $Y \in \mathcal{Y}_{\delta}$,  
$\rho \colon \Gamma_P \longrightarrow \isom (Y)$, and for any  
$\tilde \rho$-equivariant map 
$\tilde f \colon \Gamma \longrightarrow Y$,
\begin{equation*}
 E_{\mu^{2n}}(\tilde f) \leq \frac{2}{1-\delta}(1+\varepsilon')
  E_{\mu^l}(\tilde f)
\end{equation*}
holds. 
Since we have $k > 1/(1-\delta)$, 
by taking $\varepsilon'$ to be sufficiently small, we may assume
\begin{equation*}
 \frac{2}{1-\delta}(1+\varepsilon') \leq (2k-\varepsilon'') 
\end{equation*}
for some $\varepsilon''>0$.  Therefore, we obtain, for any
$\rho \colon \Gamma_P \longrightarrow \isom(Y)$ and 
$\tilde \rho$-equivariant map $\tilde f$, 
\begin{equation*}
 E_{\mu^{2n}}(\tilde f) \leq (2k-\varepsilon'') E_{\mu^l}(\tilde f)
\end{equation*}
and hence $\rho(\Gamma_P)=\tilde \rho(\Gamma)$ admits a fixed point by
Theorem \ref{thm:n-step_ineq}. This implies 
fixed-point property of $\Gamma_P$ with high probability.  
This completes the proof of Theorem \ref{thm:main}. 

If $Y=\mathcal{H}$, then we have $RQ^G(f)\geq \lambda_1(G_P)$.  Therefore
we have 
\begin{equation*}
 \frac{E_{\mu^l}(\tilde f)}{E_{\mu^{2n}}(\tilde f)} \geq 
 1-\varepsilon
\end{equation*}
instead of (\ref{eq:energy_ratio1}), and hence
\begin{equation*}
 E_{\mu^{2n}}(\tilde f) \leq (1+\varepsilon')
  E_{\mu^l}(\tilde f). 
\end{equation*}
Now we need $1+\varepsilon' \leq (2k-\varepsilon'')$ in order to apply
Theorem \ref{thm:n-step_ineq}, and it suffices to take $k=1$. 
Therefore taking $d> 1/(2+1)=1/3$ is sufficient, and this completes the
proof of Theorem \ref{thm:main_Hilbert}. 

\section{Preliminaries on probability theory}
\label{sec:some-facts-from} 

In this section, we collect some facts from probability theory, which
will be used in the proof of Propositions \ref{prop:random_eigenvalue}
and \ref{prop:random_Rayleigh_quotient}.

\subsection{Notation and some inequalities}

Let $X$ be a random variable defined on a probability space 
$(\Omega, P)$. We denote the probability of $X$ taking its value in a
set $A$ by $\mathbb{P}(X \in A)$: 
\begin{equation*}
  \mathbb{P}(X \in A) = 
  \int_{\{\omega \colon X(\omega) \in A\}} dP(\omega). 
\end{equation*}
Suppose $X$ is nonnegative and let $\mu$ be the mean (expected value 
$\mathbb{E}(X)$) of $X$. Then, for $t>0$, 
\begin{equation*}
 \mu = \int X \ dP \geq \int_{X \geq t\mu} X\ dP \geq
 \int_{X \geq t\mu} t\mu \ dP = \mathbb{P}(X \geq t\mu) t\mu. 
\end{equation*}
 Dividing the both ends by $t\mu$ yields {\it Markov's inequality}:
\begin{equation*}
 \mathbb{P}(X \geq t\mu) \leq \frac{1}{t}. 
\end{equation*}
Setting $\varepsilon = t\mu$ gives
\begin{equation}
\label{eq:Markov_ineq}
 \mathbb{P}(X \geq \varepsilon)  \leq \frac{\mu}{\varepsilon}. 
\end{equation}

If $X$ is a real valued random variable with mean $\mu$ and variance
$\sigma^2$.  Then, for any $d>0$, 
\begin{equation*}
 \sigma^2 = \mathbb{E}((X-\mu)^2) \geq
 \int_{|X-\mu|\geq d} (X-\mu)^2 \ dP \geq
 \mathbb{P}(|X-\mu|\geq d)d^2.
\end{equation*}
Dividing the both ends by $d^2$, we obtain {\it Chebyshev's inequality}:
\begin{equation*}
 \mathbb{P}(|X -\mu|\geq d) \leq \frac{\sigma^2}{d^2}. 
\end{equation*}
By taking $d=\varepsilon \mu$, we get
\begin{equation*}
 \mathbb{P}(|X -\mu|\geq \varepsilon\mu) 
 \leq \frac{\sigma^2}{\varepsilon^2\mu^2}. 
\end{equation*}

Let $X$ be a binomially distributed random variable with size $n$ and
parameter $p$; namely,  $X$ satisfies  
\begin{equation*}
 \mathbb{P}(X=x)=\,_nC_x p^x (1-p)^{n-x}, \quad x=0,1,\dots,n.
\end{equation*}
Then we say that $X$ has the binomial distribution $\mathrm{Bi}(n,p)$.  
Such an $X$ has mean $\mu=np$ and variance $\sigma^2=np(1-p)$. 
Thus Chebyshev's inequality yields
\begin{equation*}
 \mathbb{P}(|X-\mu| \geq \varepsilon\mu)
 \leq \frac{np(1-p)}{\varepsilon^2n^2p^2}
 = \frac{1-p}{\varepsilon^2 \mu} 
 \leq \frac{1}{\varepsilon^2 \mu}. 
\end{equation*}
In particular, we see that if $\{X_i\}$ is a sequence of binomially
distributed random variable with mean $\mu_i \to \infty$, then 
$\mathbb{P}(|X_i - \mu_i| \geq \varepsilon \mu_i)\to 0$.  
Actually, there are finer and more useful bounds called 
{\it Chernoff bounds}: 
\begin{equation*}
 \begin{split}
  \mathbb{P}(X \geq \mu + t)  & \leq
   \exp \left(-\frac{t^2}{2(\mu+t/3)}\right) \\
  \mathbb{P}(X \leq \mu - t)  & \leq
   \exp \left(-\frac{t^2}{2\mu}\right). 
 \end{split}
\end{equation*}
See, for example, \cite[\S~2.1]{Janson-Luczak-Rucinski} for proofs.
In particular, taking $t=\varepsilon \mu$ with 
$0 < \varepsilon < 1$, we obtain
\begin{equation}
\label{eq:chernoff_bounds} 
 \mathbb{P}(|X - \mu|> \varepsilon \mu) \leq 
 2 \exp \left(- \frac{\varepsilon^2 \mu}{3}\right). 
\end{equation}

\subsection{Moment generating function for binomially distributed random
  variables}
\label{sec:moment}

Let $X$ be a random variable on a probability space. Then 
\begin{equation*}
 \mathbb{E}(\exp (tX)) 
 = \mathbb{E}\left(\sum_{k=0}^{\infty} \frac{1}{k!}t^kX^k\right)
 = \sum_{k=0}^{\infty} \frac{1}{k!} t^k \mathbb{E}(X^k)
\end{equation*}
is called the moment generating function of $X$. 
We can compute {\it $\alpha$th moment} $\mathbb{E}(X^{\alpha})$ of 
$X$ by
\begin{equation*}
 \mathbb{E}(X^{\alpha})= 
 \left. \frac{d^{\alpha}}{dt^{\alpha}} 
  \mathbb{E}(\exp(tX)) \right\vert_{t=0}. 
\end{equation*}
Let $X$ be a random variable with distribution $\mathrm{Bi}(n,p)$.  
Then the mean of $X$ is $np$, and by computing the average, 
we get 
\begin{equation*}
 \begin{split}
 \mathbb{E}(\exp t(X-np)) 
 & = \sum_{x=0}^n \exp t(x-np) \,_nC_x p^x q^{n-x} \\
 & = \sum_{x=0}^n (\,_nC_x (e^t p)^x  q^{n-x})
        e^{-tnp} \\
 & = (e^tp + q)^n e^{-tnp}, 
 \end{split}
\end{equation*}
where $q=1-p$. A computation shows the following. 

\begin{Lemma}
 \label{lem:moment_gen_func} 
 Let $X$ be a random variable with distribution $\mathrm{Bi}(n,p)$, and
 set $M(t)= \mathbb{E}(\exp t(X-np))$. Then, $\alpha$th derivative 
 $M^{(\alpha)}(t)$ of $M(t)$ can be computed as
\begin{equation*}
  M^{(\alpha)}(t) = \sum_{\beta=0}^{\alpha}
 \,_{\alpha}C_{\beta}
 \left[ \left(e^t p + q\right)^n \right]^{(\beta)}
 \left[ e^{-tnp}\right]^{(\alpha-\beta)}, 
\end{equation*}
where $[\cdot]^{(a)}$ denotes the $a$th derivative of $[\cdot]$ and
 they turned out to be
\begin{equation*}
 \begin{split}
 &  \left[ \left(e^t p + q\right)^n \right]^{(\beta)} 
   = \sum_{j=1}^{\beta} c(\beta,j) 
     \frac{n!}{(n-j)!}
      \left(e^t p + q\right)^{n-j} e^{j t} p^{j}
    \quad (\beta > 0), \\
 &  \left[ e^{-tnp} \right]^{(\alpha-\beta)} = 
     (-np)^{\alpha-\beta}e^{-tnp}. 
 \end{split}
\end{equation*}
Here $c(k,j)$ is determined by $c(j,j)=1$, 
$c(j,0)=0$, and $c(k+1,j)=c(k,j-1)+j c(k,j)$. 
\end{Lemma}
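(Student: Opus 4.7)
The plan is to start from the closed form $M(t) = (e^tp+q)^n e^{-tnp}$ derived in the paragraph immediately preceding the lemma, and then obtain the formula for $M^{(\alpha)}(t)$ by applying the general Leibniz rule to this product. This immediately yields
\begin{equation*}
 M^{(\alpha)}(t) = \sum_{\beta=0}^{\alpha} \binom{\alpha}{\beta}
  \bigl[(e^tp+q)^n\bigr]^{(\beta)} \bigl[e^{-tnp}\bigr]^{(\alpha-\beta)},
\end{equation*}
which is the first identity of the lemma.

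The factor $[e^{-tnp}]^{(\alpha-\beta)} = (-np)^{\alpha-\beta}e^{-tnp}$ is immediate from $\tfrac{d}{dt} e^{-tnp} = -np\cdot e^{-tnp}$ by induction on $\alpha-\beta$. The substantive task is to establish
\begin{equation*}
 \bigl[(e^tp+q)^n\bigr]^{(\beta)}
  = \sum_{j=1}^{\beta} c(\beta,j)\, \frac{n!}{(n-j)!}\,(e^tp+q)^{n-j} e^{jt} p^{j}
  \quad(\beta>0),
\end{equation*}
and here I would proceed by induction on $\beta$. The base case $\beta=1$ follows directly from the chain rule with $c(1,1)=1$.

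For the inductive step, assuming the formula holds for $\beta$, I would differentiate term by term. Each summand produces two contributions through the product rule applied to $(e^tp+q)^{n-j}\cdot e^{jt}$: one from differentiating the power of $e^tp+q$, which shifts $j\mapsto j+1$ (producing a factor $n-j$ that absorbs into $n!/(n-j-1)!$), and one from differentiating $e^{jt}$, which yields a factor $j$ and keeps the same index $j$. Reindexing the first sum by $j'=j+1$ and combining with the second, the coefficient of the $j$th term in $[(e^tp+q)^n]^{(\beta+1)}$ becomes $c(\beta,j-1)+j\,c(\beta,j)$, which is exactly the defining recursion $c(\beta+1,j)=c(\beta,j-1)+j\,c(\beta,j)$. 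The boundary cases $c(j,0)=0$ and $c(j,j)=1$ are consistent with the fact that the sum runs from $j=1$ to $\beta+1$, so the induction closes.

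There is no real obstacle here; the only point that requires some care is the bookkeeping of indices when combining the two sums after the inductive differentiation, and confirming that the boundary values $c(\beta,0)=0$, $c(\beta,\beta)=1$ make the reindexing consistent so that the resulting expression is identical to the claimed formula at level $\beta+1$.
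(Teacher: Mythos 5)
Your proposal is correct and is essentially the computation the paper has in mind: the paper derives the closed form $M(t)=(e^tp+q)^ne^{-tnp}$ immediately before the lemma and then asserts the result with ``a computation shows,'' and that computation is exactly the Leibniz rule plus the induction on $\beta$ you carry out, with the recursion $c(\beta+1,j)=c(\beta,j-1)+j\,c(\beta,j)$ emerging from the two product-rule contributions just as you describe. The bookkeeping at the boundary indices ($j=1$ via $c(\beta,0)=0$ and $j=\beta+1$ via $c(\beta,\beta)=1$) checks out, so there is no gap.
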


We will use the following estimate in the proof of Proposition
\ref{prop:random_eigenvalue}. 

\begin{Lemma}
 \label{prop:moment_estimate}
 For a random variable $X$ with distribution $\mathrm{Bi}(n,p)$ and 
 with mean $\mu=np <1$, we have 
\begin{equation*}
 |\mathbb{E}((X-np)^{\alpha})| \leq c(\alpha) np.
\end{equation*}
\end{Lemma}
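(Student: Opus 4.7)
The plan is to read off $\mathbb{E}((X-np)^\alpha) = M^{(\alpha)}(0)$ directly from Lemma~\ref{lem:moment_gen_func} and bound the resulting expression term by term. Evaluating the formula at $t=0$ gives
\begin{equation*}
 \mathbb{E}((X-np)^\alpha) = \sum_{\beta=0}^{\alpha} \binom{\alpha}{\beta} A_\beta \cdot (-np)^{\alpha-\beta},
\end{equation*}
where $A_0 = (p+q)^n = 1$ and, for $\beta \geq 1$,
\begin{equation*}
 A_\beta = \sum_{j=1}^{\beta} c(\beta,j)\, \frac{n!}{(n-j)!}\, p^j.
\end{equation*}

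The core observation is that $\frac{n!}{(n-j)!} p^j = n(n-1)\cdots(n-j+1)\, p^j \leq (np)^j$. Here the hypothesis $np < 1$ becomes crucial: for every $j \geq 1$ one has $(np)^j \leq np$, and similarly $(np)^{\alpha-\beta} \leq 1$ whenever $\alpha - \beta \geq 0$. Using these in the displayed sum, the $\beta = 0$ summand contributes $(np)^\alpha \leq np$ (assuming $\alpha \geq 1$), while each $\beta \geq 1$ summand is bounded by
\begin{equation*}
 \binom{\alpha}{\beta} \left(\sum_{j=1}^{\beta} c(\beta,j)\right) np.
\end{equation*}
Adding everything up yields $|\mathbb{E}((X-np)^\alpha)| \leq c(\alpha)\, np$ with $c(\alpha)$ a constant depending only on $\alpha$.

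There is no serious obstacle; the only thing to be careful about is combinatorial bookkeeping, namely verifying the inequality $n(n-1)\cdots(n-j+1)p^j \leq (np)^j$ and then using $np < 1$ to collapse every power $(np)^j$ with $j \geq 1$ down to $np$. The assumption $np < 1$ is used essentially: without it one would only obtain a bound of the form $c(\alpha)(np)^{\lceil \alpha/2 \rceil}$ that is standard for binomial central moments, whereas the statement we want is the much stronger linear bound in $np$, which holds precisely in the sparse regime under consideration.
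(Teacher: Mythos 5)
Your proposal is correct and follows essentially the same route as the paper: evaluate $M^{(\alpha)}(0)$ via Lemma \ref{lem:moment_gen_func}, use $n(n-1)\cdots(n-j+1)p^j \leq (np)^j \leq np$ for $j \geq 1$ together with $np<1$ to collapse all powers, and absorb the combinatorial factors into $c(\alpha)$. The only cosmetic difference is in which factor of each product you bound by $np$ and which by $1$, which does not affect the argument.
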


\begin{proof}
 Since $\mathbb{E}((X-np)^{\alpha})=M^{(\alpha)}(0)$ as we have seen above, 
 we need to estimate $M^{(\alpha)}(0)$.  
 By Lemma \ref{lem:moment_gen_func}, 
 and $np<1$, if $\beta>0$ 
\begin{equation*}
 \left|
 \left[\left(e^t p + q\right)^n \right]^{(\beta)} |_{t=0}\right|
 \leq \sum_{j=1}^{\beta} c(\beta,j) \frac{n!}{(n-j)!}p^j
 \leq \sum_{j=1}^{\beta} c(\beta,j) n^j p^j
 \leq \sum_{j=1}^{\beta} c(\beta,j) np, 
\end{equation*}
 and if $\alpha>\beta$
\begin{equation*}
 \left| \left[e^{-tnp}\right]^{(\alpha-\beta)}|_{t=0} \right|
 \leq (np)^{\alpha-\beta} \leq np. 
\end{equation*}
Thus we obtain the desired estimate. 
\end{proof}

\subsection{Remarks on our model}
\label{sec:remark-our-model}

In order to prove Propositions \ref{prop:random_eigenvalue} and
\ref{prop:random_Rayleigh_quotient}, it is convenient to fix the
cardinality of $R$, the set of relations, and consider
\begin{equation*}
 P(m,\eta,d,c_0)=
 \{P=(S,R) \in P(m,\eta,d) \mid \# R = c_0(2m)^{d\eta}\}, 
\end{equation*}
where $c^{-1}\leq c_0 \leq c$. 
Then the model $P(m,\eta,d,c_0)$ can be regarded as the set of subsets
of $W_{\eta}$ with cardinality $c_0(2m)^{d\eta}$ equipped with the
uniform counting measure. 
Of course, this is not the same as the probability space
$P'(m,\eta,d,c_0)$ which describes an experiment given by choosing a
word $c_0(2m)^{d\eta}$ times randomly from $W_{\eta}$; there may be
multiply chosen words and there is a natural action of the permutation
group given by changing the order of the choice.  
If we take $d<1/2$, then the
probability of an elements in $P'(m,\eta,d,c_0)$ to have multiply
chosen words tends to $0$ as $\eta \to \infty$ as the following
computation shows. Note that, on the subset $P'_0(m,\eta,d,c_0)$ of
$P'(m,\eta,d,c_0)$ consisting of elements without multiply chosen
words, the action of the permutation group mentioned above is
free. Thus, taking the quotient by the action of 
the permutation group, we can regard $P'_0(m,\eta,d,c_0)$ as 
$P(m,\eta,d,c_0)$.

If $X$ is a random variable with distribution $\mathrm{Bi}(n,p)$, then 
\begin{equation*}
 \mathbb{P}(X \geq 2)= 1- \mathbb{P}(X=0 \text{ or }1)
 =1 -\left(
  (1-p)^n + n(1-p)^{n-1}p
  \right). 
\end{equation*}
Since
\begin{equation*}
 \begin{split}
  & (1-p)^n + n(1-p)^{n-1}p \\
 = & \sum_{\alpha=0}^n \,_nC_{\alpha} 1^{n-\alpha}(-p)^{\alpha}
    + np \sum_{\alpha=0}^{n-1} \,_{n-1}C_{\alpha} 
      1^{n-\alpha}(-p)^{\alpha} \\
 \geq & 1 - np - n^2p^2 - \dots -n^np^n \\
     & \phantom{1}
     + np(1-(n-1)p - (n-1)^2p^2 - \dots - (n-1)^{n-1}p^{n-1}) \\
 \geq & 1 -2(np)^2 - \dots -2(np)^n, 
 \end{split}
\end{equation*}
we obtain
\begin{equation*}
 \mathbb{P}(X \geq 2) \leq 2\sum_{\alpha=2}^n (np)^{\alpha}. 
\end{equation*}
Now take any $w \in W_{\eta}$, and denote by $X_w(P)$ the number of
times that $w$ is chosen in an element $P \in P'(m,\eta,d,c_0)$.  
Then $X_w$ is a random
variable on $P'(m,\eta,d,c_0)$ with distribution 
$\mathrm{Bi}(c_0(2m)^{d\eta},(2m)^{-\eta})$. 
By the computation above, we have
\begin{equation*}
 \mathbb{P}(X_w \geq 2) \leq 2\sum_{\alpha=2}^{c_0(2m)^{d\eta}}
  (c_0(2m)^{d\eta} (2m)^{-\eta})^{\alpha}
\end{equation*}
Since $\# W_{\eta}=(2m)^{\eta}$, we see that
\begin{equation*}
\begin{split}
   \mathbb{P}(X_w \geq 2 \text{ for some }w \in W_{\eta}) 
 \leq & (2m)^{\eta} \times 2\sum_{\alpha=2}^{c_0(2m)^{d\eta}}
  c_0^{\alpha} (2m)^{\alpha d\eta} (2m)^{-\alpha \eta} \\
 = & 2 \sum_{\alpha=2}^{c_0(2m)^{d\eta}} 
    c_0^{\alpha} (2m)^{\alpha d\eta} (2m)^{-(\alpha-1)\eta}  \\
 = & 2 c_0^2(2m)^{(2d-1)\eta}  \sum_{\alpha=2}^{c_0(2m)^{d\eta}}
     c_0^{\alpha-2} (2m)^{(\alpha-2)(d-1)\eta} \\
 \leq & 2c_0^2(2m)^{(2d-1)\eta} \ \frac{1}{1-c_0(2m)^{-(1-d)\eta}}
\end{split}
\end{equation*}
holds for large $\eta$, since $c_0 (2m)^{-(1-d)\eta}<1$ when $\eta$ is
large enough.  By taking $d<1/2$ and suitable constant $c'$, we see that  
\begin{equation*}
 \mathbb{P}(X_w \geq 2 \text{ for some }w \in W_{\eta}) 
 \leq c'(2m)^{(2d-1)\eta}  \to 0
 \quad (\eta \to \infty). 
\end{equation*}
Note that $c'$ can be taken so that depending only on $c$
taken in order to define $P(m,\eta,d)$ but independent of $c_0$. 
In other words, the measure of $P'_0(m,\eta,d,c_0)$ in the probability
space $P'(m,\eta,d,c_0)$ uniformlly tends to $1$.  Hence, assuming
$d<1/2$,  we may identify $P(m,\eta,d,c_0)$ with $P'(m,\eta,d,c_0)$.  We
will use this identification in the proof of Proposition
\ref{prop:random_Rayleigh_quotient}.  

In order to prove Proposition \ref{prop:random_eigenvalue}, we need
another model $P''(m,\eta,d,c_0)$ of random groups. 
In this model $P''(m,\eta,d,c_0)$,  each word in $W_{\eta}$ is
independently chosen to be a relation of $P=(S,R)$ with probability
$(c_0(2m)^{d\eta})/(2m)^{\eta}$. Therefore any subset of $W_{\eta}$ can
occur as $R$ of $P=(S,R)$ in $P''(m,\eta,d,c_0)$, and the probability of
the occurrence is given by
\begin{equation*}
 \mathbb{P}(P=(S,R)) = 
 \left(\frac{c_0 (2m)^{d\eta}}{(2m)^{\eta}}\right)^{\# R}
 \left(1-\frac{c_0
  (2m)^{d\eta}}{(2m)^{\eta}}\right)^{(2m)^{\eta}-\# R}. 
\end{equation*}
This model corresponds to a model of random graphs often denoted by
$G(n,p)$ in the literature, where $n=(2m)^{\eta}$ and 
$p=(c_0(2m)^{d\eta})/(2m)^{\eta}$ in our case; while our original
model $P(m,\eta,d,c_0)$ corresponds to another model of random graphs
often denoted by $G(n,M)$, where $n=(2m)^{\eta}$ and $M=c_0(2m)^{d\eta}$
in our case. 
In our setting, a well-known relation between $G(n,p)$ and $G(n,M)$ can
be read as follows. 

\begin{Lemma}[cf.~{\cite[p.~38]{Bollobas}} and 
 {\cite[p.~17]{Janson-Luczak-Rucinski}}]
 \label{lem:relation_between_models}
 Let $\mathcal{Q}$ be any property of a group, and denote by 
 $\mathbb{P}_P(G_P \in \mathcal{Q})$ 
 $($resp.~$\mathbb{P}_{P''}(G_P \in \mathcal{Q})$\,$)$ the probability of 
 $G_P$ given by $P \in P(m,\eta,d,c_0)$ 
 $($resp.~$P \in P''(m,\eta,d,c_0)$\,$)$ having property $\mathcal{Q}$. 
 Then we have
 \begin{equation*}
  \mathbb{P}_{P}(G_P \in \mathcal{Q}) \leq
  3 \sqrt{c_0(2m)^{d\eta}}
  \mathbb{P}_{P''}(G_P \in \mathcal{Q}). 
 \end{equation*}
\end{Lemma}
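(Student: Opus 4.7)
The plan is to recognize $P(m,\eta,d,c_0)$ as $P''(m,\eta,d,c_0)$ conditioned on a single event, and then bound the probability of that event from below.

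Set $N = (2m)^\eta$, $M = c_0(2m)^{d\eta}$, and $p = M/N$. Under $P''(m,\eta,d,c_0)$, each word of $W_\eta$ is included in $R$ independently with probability $p$, so the random variable $X := \#R$ has distribution $\mathrm{Bi}(N,p)$ with mean $Np = M$. Furthermore, conditionally on $X = k$, the set $R$ is distributed uniformly among the $k$-element subsets of $W_\eta$; in particular, conditioning on $X = M$ gives exactly the uniform measure on $P(m,\eta,d,c_0)$. Since $G_P$ is determined by $R$, the law of total probability yields
\begin{equation*}
 \mathbb{P}_{P''}(G_P \in \mathcal{Q}) \;\geq\; \mathbb{P}_{P''}(X = M)\cdot \mathbb{P}_{P}(G_P \in \mathcal{Q}),
\end{equation*}
so everything reduces to showing $\mathbb{P}_{P''}(X = M) \geq 1/(3\sqrt{M})$ (for $\eta$ large, which is the only regime in which the lemma is ever applied).

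For this estimate I would write
\begin{equation*}
 \mathbb{P}_{P''}(X = M) \;=\; \binom{N}{M}\, p^M(1-p)^{N-M},
\end{equation*}
and apply Stirling's formula to the binomial coefficient. Since $M = Np$ is the mode of $\mathrm{Bi}(N,p)$ and since $p = c_0(2m)^{(d-1)\eta}\to 0$ as $\eta\to\infty$, the standard Stirling computation gives
\begin{equation*}
 \mathbb{P}_{P''}(X = M) \;=\; \frac{1+o(1)}{\sqrt{2\pi M(1-p)}}\;\geq\; \frac{1}{3\sqrt{M}}
\end{equation*}
for all sufficiently large $\eta$, the last inequality holding because $1/\sqrt{2\pi} \approx 0.399 > 1/3$ leaves ample room to absorb the $o(1)$ correction and the factor $\sqrt{1-p}\to 1$. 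Rearranging produces the claimed bound $\mathbb{P}_P(G_P \in \mathcal{Q}) \leq 3\sqrt{M}\,\mathbb{P}_{P''}(G_P \in \mathcal{Q})$.

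The calculation is a routine instance of the well-known passage between the $G(n,M)$ and $G(n,p)$ models of random graphs cited from Bollob\'as and Janson--\L uczak--Ruci\'nski; no step is genuinely hard. The only mild subtlety is pinning down the numerical constant $3$, which is why one wants the Stirling estimate rather than a weaker local-limit-type bound. Once that constant is in hand, the lemma is immediate.
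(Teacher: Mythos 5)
Your proof is correct and follows essentially the same route as the paper: both condition the independent-inclusion model $P''$ on the event $\#R=M$ to recover the uniform model $P$, apply the law of total probability, and bound $\binom{N}{M}p^Mq^{N-M}$ from below by $1/(3\sqrt{M})$. The only cosmetic difference is that the paper invokes an explicit non-asymptotic estimate from Bollob\'as, namely $\binom{N}{M}p^Mq^{N-M}\geq e^{-1/(6M)}(2\pi pqN)^{-1/2}$, which yields the constant $3$ for all $\eta$, whereas your Stirling argument with a $1+o(1)$ correction gives it only for large $\eta$ --- harmless here, since that is the only regime in which the lemma is used.
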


\begin{proof}
Set $N=(2m)^{\eta}$,  $M=c_0(2m)^{d\eta}$, 
$p= (c_0(2m)^{d\eta})/(2m)^{\eta}$ and $q=1-p$. 
By the law of total probability, we have
\begin{equation*}
\begin{split}
 \mathbb{P}_{P''}(G_P \in \mathcal{Q}) 
 = &  \sum_{s=0}^{N} \mathbb{P}_{P,s}(G_P \in \mathcal{Q}) 
   \,_{N}C_s p^{s} q^{N-s} \\
 \geq & \mathbb{P}_{P,M}(G_P \in \mathcal{Q})
  \,_{N}C_M p^{M} q^{N-M}, 
\end{split}
\end{equation*}
where $P_{P,s}(G_P \in \mathcal{Q})$ denotes the probability of 
$G_P \in \mathcal{Q}$ in the model $P(m,\eta,d,c_0')$ 
with $s=c_0'(2m)^{d\eta}$. Now \cite[p.~4]{Bollobas} gives
\begin{equation*}
\begin{split}
  \,_{N}C_M p^M q^{N-M}
  \geq 
   e^{-1/(6M)} \frac{1}{\sqrt{2\pi}} (pqN)^{-1/2}.
\end{split}
\end{equation*}
Noting $M=pN$ implies the desired inequality. 
\end{proof}

\section{Proofs of Proposition \ref{prop:random_eigenvalue} and
 \ref{prop:random_Rayleigh_quotient}} 
\label{sec:conc-proofs-prop}

We first prove Proposition \ref{prop:random_Rayleigh_quotient} and then
proceed to Proposition \ref{prop:random_eigenvalue}. 

\subsection{Proof of Proposition \ref{prop:random_Rayleigh_quotient}}
\label{sec:proof-prop-RQ}

Let $c_0$ be a positive real number with $c^{-1} \leq c_0 \leq c$, where 
$c$ is the constant fixed in order to define $P(m,\eta,d)$, and set
\begin{equation*}
 P(m,\eta,d,c_0)=
 \{P=(S,R) \in P(m,\eta,d) \mid \# R = c_0(2m)^{d\eta}\}
\end{equation*}
as above. 
Recall that $\eta=2n+l \geq (2k+1)l$,  
take $d > k/(2k+1)$, and set $\varepsilon_0 = d(2k+1)-k$. 
Because of (\ref{eq:k_and_l}), we see that
\begin{equation}
\label{eq:d_and_n_and_ell}
 \begin{split}
  d\eta & \geq d(2k+1)l = \varepsilon_0 l + kl, \\
  d\eta -n & \geq d(2k+1)l - \left(kl + (2k+1)\right) 
          = \varepsilon_0 l - (2k+1),  \\
  d\eta -n & \leq d\left((2k+1)l + 2(2k+1)\right) - kl
          = \varepsilon_0 l + 2d(2k+1).
 \end{split}
\end{equation}
We will show that, for any $\varepsilon' >0$, there exist constants
$a_1$ and $a_2$ depending only on $m$, $k$, $c$, and 
$\varepsilon'$ 
(and independent of $l$ and $c_0$) such that  
\begin{equation*}
 \begin{split}
& \frac{\left\{
 P \in P(m,\eta,d,c_0) \left\vert
  \begin{matrix}
       (\ref{eq:approx_ratio})'
       \text{ holds for } \forall Y, 
       \forall \rho \colon \Gamma_P \longrightarrow \isom(Y), \\
       \forall 
       \text{nonconstant }\rho \text{-equivariant map } f
  \end{matrix} \right.
 \right\}}{\# P(m,\eta,d,c_0)}  \\
 \geq  & 1- a_1 \exp\left(-a_2(2m)^{\varepsilon_0 l}\right)
 \end{split}
\end{equation*}
holds for large $l$ (and hence for large $\eta$), where 
 \begin{equation*}
 \tag*{(\ref{eq:approx_ratio})$'$}
  (1-\varepsilon')RQ^G(\overline{f}) 
  \leq \frac{E_{\mu^{l}}(\tilde f)}{E_{\mu^{2n}}(\tilde f)}
  \leq (1+\varepsilon')RQ^G(\overline{f}). 
 \end{equation*} 
It is clear that this implies 
Proposition \ref{prop:random_Rayleigh_quotient}. 

Since we are assuming $d<1/2$, we may deal with
$P'(m,\eta,d,c_0)$ instead of $P(m,\eta,d,c_0)$ as we have just
explained in \S \ref{sec:remark-our-model}. 
Thus what we will actually show is that, for any
$\varepsilon' >0$, there exist constants $a_1$ and $a_2$ depending only
on $m$, $k$, $c$, and $\varepsilon'$ 
(and independent of $l$ and $c_0$) such that  
\begin{equation}
\label{eq:prop_rayleigh_reduced}
 \begin{split}
 &  \mathbb{P}\left(
  \begin{matrix}
       P \in P'(m,\eta,d,c_0) \text{ satisfies } 
       (\ref{eq:approx_ratio})' \\
       \text{ for } \forall Y, 
       \forall \rho \colon \Gamma_P \longrightarrow \isom(Y), \\
       \forall 
       \text{nonconstant }\rho \text{-equivariant map } f
  \end{matrix} \right) \\
 \geq  & 1- a_1 \exp\left(-a_2(2m)^{\varepsilon_0 l}\right)
 \end{split}
\end{equation}
holds for large $l$ (and hence for large $\eta$). 
Then this implies Proposition \ref{prop:random_Rayleigh_quotient}. 

Let $w \in W_{\eta}$. We denote the word given by the first $n$
letters of $w$ by $w_1$, the word formed by the next $l$ letters by
$u$, and the inverse word of the last $n$ letters by $w_2$; thus $w$
can be written as $w=w_1uw_2^{-1}$. 
Take $P=(S,R) \in P'(m,\eta,d,c_0)$. 
Let $\rho \colon \Gamma_P \longrightarrow \isom(Y)$ be a homomorphism,
and take any $\rho$-equivariant map 
$f \colon \Gamma_P \longrightarrow Y$. 
Then we can consider $\tilde \rho$-equivariant map 
$\tilde f \colon \Gamma \longrightarrow Y$, where 
$\tilde \rho = \rho \circ \pi_P \colon \Gamma \longrightarrow \isom(Y)$. 
Regarding $w_i$, $i=1,2$, as an element 
$\pi_P(\overline{w_i}) \in \Gamma_P$ in
a natural way, we get a map $\overline{f} \colon V \longrightarrow Y$
induced from $f$. 
Since $V = W_n$, we have
\begin{equation*}
 \begin{split}
 \Vert d \overline{f} \Vert^2_{L^2} 
 & = \frac{1}{2} \sum_{v, v' \in W_n} \nu(v,v') 
     d_Y(\overline{f}(v),\overline{f}(v'))^2  \\
 & = \frac{1}{2} \sum_{v, v' \in W_n} \nu(v,v') 
     d_Y(\tilde f(\overline{v}), \tilde f(\overline{v'}))^2  \\
 & = \frac{1}{2} \sum_{v,v' \in W_n} \nu(v,v')
     d_Y(\tilde f(e), \tilde f(\overline{v}^{-1}\overline{v'}))^2 \\
 & = \frac{1}{2} \sum_{\gamma \in \Gamma}
     \sum_{v,v' \in W_n \colon 
       \overline{v}^{-1}\overline{v'}=\gamma} \nu(v,v')
     d_Y(\tilde f(e), \tilde f(\gamma))^2 \\
 & = \frac{1}{2}\sum_{\gamma_P \in \Gamma_P} \ 
     \sum_{\gamma \in \Gamma \colon \pi_P(\gamma)=\gamma_P}\ 
     \sum_{v,v' \in W_n \colon 
       \overline{v}^{-1}\overline{v'}=\gamma} \nu(v,v')
     d_Y(f(e),  f(\gamma_P))^2. 
 \end{split}
\end{equation*}
Note that $w=w_1uw_2^{-1} \in R$ means that
$\pi_P(\overline{w_1u})=\pi_P(\overline{w_2})$ in $\Gamma_P$. Hence 
$\pi_P(\overline{w_1}^{-1}\overline{w_2})=\pi_P(\overline{u})$ and
$\pi_P(\overline{w_2}^{-1}\overline{w_1})=\pi_P(\overline{u}^{-1})$. 
Thus, recalling the definition of $\nu(v,v')$ and fixing 
$\gamma_P \in \Gamma_P$,  we can rewrite as 
\begin{equation*}
\begin{split}
   & \sum_{\gamma \in \Gamma \colon \pi_P(\gamma)=\gamma_P}\ 
       \sum_{v,v' \in W_n \colon 
       \overline{v}^{-1}\overline{v'}=\gamma }
      \nu(v,v') \\
 = & \sum_{\gamma \in \Gamma \colon \pi_P(\gamma)=\gamma_P}\ 
     \sum_{v,v' \in W_n \colon 
       \overline{v}^{-1}\overline{v'}=\gamma}
     \frac{\#\{w \in R \mid w_1=v \text{ and } w_2=v'\}}
      {2\#R} \\
   & \phantom{\sum_{v,v' \in W_n \colon 
       \overline{v}^{-1}\overline{v'}=\gamma}}
     + \frac{\#\{w \in R \mid w_1 = v' \text{ and } w_2=v\}}{2\# R} \\
 = & \frac{\#\{ w \in R \mid \pi_P(\overline{u})=\gamma_P
         \text{ or } 
         \pi_P(\overline{u}^{-1})=\gamma_P\}}{2\#R}. 
\end{split}
\end{equation*}
Therefore we get
\begin{equation}
 \label{eq:on_Gamma_P}
 \Vert d \overline{f} \Vert^2_{L^2}
 = \frac{1}{2} \sum_{\gamma_P \in \Gamma_P}
   \frac{\# \{w \in R \mid \pi_P(\overline{u})= \gamma_P
          \text{ or }\gamma_P^{-1} \}}{2\#R}
   d_Y(f(e),f(\gamma_P))^2, 
\end{equation}
Since
\begin{equation*}
 \# \{w \in R \mid \pi_P(\overline{u})= \gamma_P
          \text{ or }\gamma_P^{-1} \}
 = \sum_{\gamma \in \Gamma \colon \pi_P(\gamma)=\gamma_P}
 \# \{w \in R \mid \overline{u}= \gamma 
          \text{ or }\gamma^{-1}\}, 
\end{equation*}
lifting (\ref{eq:on_Gamma_P}) to $\Gamma$ yields
\begin{equation*}
 \Vert d \overline{f} \Vert^2_{L^2}
 = \frac{1}{2} \sum_{\gamma \in \Gamma}
   \frac{\# \{w \in R \mid \overline{u}= \gamma
          \text{ or }\gamma^{-1} \}}{2\#R}
   d_Y(\tilde f(e),\tilde f(\gamma))^2. 
\end{equation*}
Note that if $\overline{u}=e$, then 
$d_Y(\tilde f(e),\tilde f(\gamma))=d_Y(\tilde f(e), \tilde f(e))=0$, and
hence we may ignore this case in what follows. Then we have 
\begin{equation*}
 \begin{split}
 & \# \{w \in R \mid \overline{u}= \gamma
          \text{ or }\gamma^{-1} \}
   = \# \{w \in R \mid \overline{u}= \gamma\}
     + \# \{w \in R \mid \overline{u}=\gamma^{-1} \} \\
 = &  \sum_{v \in W_{l} \colon \overline{v}=\gamma}
        \# \{w \in R \mid u=v\}
      + \sum_{v' \in W_{l} \colon \overline{v'}=\gamma^{-1}}
        \# \{w \in R \mid u=v'\}
 \end{split}
\end{equation*}
for $\gamma \not= e$.  
Now let $X_{v,l} \colon P'(m,\eta,d,c_0) \longrightarrow \Z$ be a random 
variable defined by
\begin{equation*}
 X_{v,l}(P) = \# \{w \in R \mid u=v\}, 
\end{equation*}
where $v \in W_{l}$. 
By the definition of $P'(m,\eta,d,c_0)$, $X_{v,l}$ 
has the binomial distribution $\mathrm{Bi}(\#R,\frac{1}{\# W_l})$ with
mean $\frac{\# R}{\# W_l}$. 
Note that, for $P=(S,R) \in P'(m,\eta,d,c_0)$ under consideration, we
can write as  
\begin{equation}
\label{eq:df_with_random_variable}
\begin{split}
  \Vert d \overline{f} \Vert^2_{L^2}
 =  \frac{1}{2} \sum_{\gamma \in \Gamma}
    & \frac{\sum_{v \in W_{l} \colon \overline{v}=\gamma}
        X_{v,l}(P)
      + \sum_{v' \in W_{l} \colon \overline{v'}=\gamma^{-1}}
        X_{v',l}(P)}{2\#R} \\
    & \times d_Y(\tilde f(e),\tilde f(\gamma))^2. 
\end{split}
\end{equation}
Then, since 
$\mu^{l}(e,\gamma)= \#\{w \in W_{l}\mid \overline{w}=\gamma\}/\# W_{l}$,  
the expected value of the coefficient of 
$d_Y(\tilde f(e), \tilde f(\gamma))$ in
(\ref{eq:df_with_random_variable}) becomes 
\begin{equation*}
 \begin{split}
   &  \mathbb{E}\left(
   \frac{\sum_{v \in W_{l} \colon \overline{v}=\gamma}
        X_{v,l}
      + \sum_{v' \in W_{l} \colon \overline{v'}=\gamma^{-1}}
        X_{v',l}}{2\#R}
       \right)  \\
 = &  \frac{1}{2\# R}\left(
      \sum_{v \in W_{l} \colon \overline{v}=\gamma}
       \mathbb{E}(X_{v,l}) 
      + \sum_{v' \in W_{l} \colon \overline{v'}=\gamma^{-1}}
       \mathbb{E}(X_{v',l}) \right) \\
 = & \frac{1}{2\# R}\left(
      \sum_{v \in W_{l} \colon \overline{v}=\gamma}
       \frac{\# R}{\# W_{l}}
    + \sum_{v' \in W_{l} \colon \overline{v'}=\gamma^{-1}}
       \frac{\# R}{\# W_{l}} \right)\\
 = & \frac{1}{2} \left(\mu^{l}(e,\gamma) + \mu^{l}(e,\gamma^{-1})\right) \\
 = & \mu^{l}(e,\gamma). 
 \end{split}
\end{equation*}
Therefore, if $X_{v,l}(P)=\# \{w \in R \mid u=v\}$ of $P=(S,R)$ is close
to the expected value for all $v \in W_{l}$, then
$\Vert d \overline{f} \Vert^2_{L^2}$ must be close to  
\begin{equation*}
 \frac{1}{2} \sum_{\gamma \in \Gamma} \mu^{l}(e,\gamma)
  d_Y(\tilde f(e), \tilde f(\gamma))^2 = E_{\mu^{l}}(\tilde f).
\end{equation*}
More precisely speaking, if $P=(S,R)$ satisfies
\begin{equation*}
 (1-\varepsilon)\frac{\# R}{\# W_l} 
 \leq X_{v,l}(P) \leq
 (1+\varepsilon)\frac{\# R}{\# W_l}
\end{equation*}
for all $v \in W_l$, then we see that, for any $Y$, 
$\rho \colon \Gamma_P \longrightarrow \isom(Y)$ and 
$\rho$-equivariant map $f$, 
\begin{equation}
 \label{eq:df}
 (1-\varepsilon)E_{\mu^l}(\tilde f) 
 \leq \Vert d \overline{f} \Vert_{L^2}^2 \leq
 (1+\varepsilon)E_{\mu^l}(\tilde f)
\end{equation}
holds.

On the other hand, we have
\begin{equation*}
 \begin{split}
 F(\overline{f}) 
 & = \frac{1}{2} \sum_{v,v' \in W_n}\nu(v) \nu(v') 
   d_Y(\overline{f}(v),\overline{f}(v'))^2 \\
 & = \frac{1}{2} \sum_{v,v' \in W_n}\nu(v) \nu(v') 
   d_Y(f(\pi_P(\overline{v})), f(\pi_P(\overline{v'})))^2 \\
 & = \frac{1}{2} \sum_{v,v' \in W_n} \nu(v) \nu(v')
   d_Y(f(e), f(\pi_P(\overline{v}^{-1} \overline{v'})))^2 \\
 & = \frac{1}{2} \sum_{\gamma,\gamma' \in \Gamma}
   \sum_{v,v' \in W_n \colon 
     \overline{v}=\gamma, \overline{v'}=\gamma'} \nu(v) \nu(v')
   d_Y(\tilde f(e), \tilde f(\gamma^{-1} \gamma'))^2. 
 \end{split}
\end{equation*}
Let 
$X_{i,v,n} \colon P(m,\eta,d,c_0) \longrightarrow \Z$, $i=1,2$, be a  
random variable defined by
\begin{equation*}
 X_{i,v,n}(P) = \# \{w \in R \mid w_i = v\}, 
\end{equation*}
where $v \in W_n$. 
Using this random variable and recalling the definition of $\nu(v)$, we
see that, for $P=(S,R)$, 
\begin{equation*}
\begin{split}
  \nu(v) & = \sum_{v' \in W_n} \nu(v,v')
  = \frac{\#\{w \in R \mid w_1=v\} + \#\{w \in R \mid w_2=v\}}
    {2\# R} \\ 
  & = \frac{X_{1,v,n}(P)+X_{2,v,n}(P)}{2\# R}
\end{split}
\end{equation*}
holds.  Therefore we get 
\begin{equation*}
 \begin{split}
 F(\overline{f})
 & = \frac{1}{2} \sum_{\gamma,\gamma' \in \Gamma}
   \frac{\sum_{v \in W_n \colon \overline{v}=\gamma}
      \left(X_{1,v,n}(P) + 
      X_{2,v,n}(P)\right)}{2\# R}  \\
 & \phantom{\frac{1}{2} \sum_{\gamma,\gamma' \in \Gamma}}    
   \times
   \frac{\sum_{v \in W_n \colon \overline{v}=\gamma'}
      \left(X_{1,v,n}(P) + 
      X_{2,v,n}(P)\right)}{2\# R} \\
 & \phantom{\frac{1}{2} \sum_{\gamma,\gamma' \in \Gamma}} 
   \times 
   d_Y(\tilde f(e), \tilde f(\gamma^{-1} \gamma'))^2. \\
 \end{split}
\end{equation*}
Since $X_{i,v,n}$ has the binomial distribution 
$\mathrm{Bi}(\# R, \frac{1}{\# W_n})$ for $i=1,2$, we get
\begin{equation*}
 \begin{split}
    \mathbb{E}\left( \frac{\sum_{v \in W_n \colon \overline{v}=\gamma}
      X_{1,v,n} + X_{2,v,n}}{2\# R} \right)
 = & \frac{1}{2\# R} \sum_{v \in W_n \colon \overline{v}=\gamma} 
     \left(\mathbb{E}(X_{1,v,n}) 
            +\mathbb{E}(X_{2,v,n})\right) \\
 = & \frac{1}{\# R} \# \{v \in W_n \mid \overline{v}=\gamma\} 
          \frac{\# R}{\# W_n} \\
 = & \mu^n(e,\gamma). 
 \end{split}
\end{equation*}
Therefore, if both $\# \{w \in R \mid w_1 = v \}$ and 
$\# \{w \in R \mid w_2 = v \}$ are close to the expected
value for all $v \in W_n$, then $F(\overline{f})$ must be close to 
\begin{equation*}
 \begin{split}
 & \frac{1}{2} \sum_{\gamma, \gamma' \in \Gamma}
  \mu^n(e,\gamma) \mu^n(e,\gamma')
  d_Y(\tilde f(e), \tilde f(\gamma^{-1} \gamma'))^2 \\
 = & \frac{1}{2} \sum_{\gamma, \gamma' \in \Gamma}
  \mu^n(\gamma^{-1},e)\mu^n(\gamma^{-1},\gamma^{-1}\gamma')
   d_Y(\tilde f(e),\tilde f(\gamma^{-1}\gamma'))^2 \\
 = & \frac{1}{2} \sum_{\gamma} \mu^n(\gamma^{-1},e)
   \sum_{\gamma'} \mu^n(\gamma^{-1},\gamma^{-1}\gamma')
   d_Y(\tilde f(e), \tilde f(\gamma^{-1}\gamma'))^2 \\
 = & \frac{1}{2} \sum_{\gamma} \mu^n(e,\gamma^{-1})
   \sum_{\gamma''} \mu^n(\gamma^{-1},\gamma'') 
   d_Y(\tilde f(e), \tilde f(\gamma''))^2 \\
 = & \frac{1}{2} \sum_{\gamma''} 
   \sum_{\gamma} \mu^n(e,\gamma^{-1})
   \mu^n(\gamma^{-1},\gamma'') 
   d_Y(\tilde f(e), \tilde f(\gamma''))^2 \\
 = & \frac{1}{2} \sum_{\gamma''} \mu^{2n}(e,\gamma'')
   d_Y(\tilde f(e), \tilde f(\gamma''))^2 \\
 = & E_{\mu^{2n}}(\tilde f). 
 \end{split}
\end{equation*}
Actually, if $P=(S,R)$ satisfies 
\begin{equation*}
 \begin{split}
  & (1-\varepsilon)\frac{\# R}{\# W_n} 
   \leq X_{i,v,n}(P) \leq
    (1+\varepsilon)\frac{\# R}{\# W_n}, \quad i=1,2 
 \end{split}
\end{equation*}
for all $v \in W_n$, then we see that, for any $Y$, 
$\rho \colon \Gamma_P \longrightarrow \isom(Y)$, and $\rho$-equivariant
map $f$, 
\begin{equation}
 \label{eq:F(f)}
 (1-\varepsilon)^2 E_{\mu^{2n}}(\tilde f)
 \leq F(\overline{f}) \leq
 (1+\varepsilon)^2 E_{\mu^{2n}}(\tilde f)
\end{equation}
holds.

Since our random variables $X_{v,l}$ and $X_{i,v,n}$ are binomially
distributed with
distribution $\mathrm{Bi}(\#R, \frac{1}{\#W_l})$ and 
$\mathrm{Bi}(\#R, \frac{1}{\#W_n})$ respectively, we have 
\begin{equation*}
 \begin{split}
 \mathbb{E}(X_{v,l}) & = \frac{\# R}{\# W_l}
                       = c_0(2m)^{d\eta-l}
                       \geq c_0(2m)^{\varepsilon_0 l + (k-1)l} \\
 \mathbb{E}(X_{i,v,n}) & = \frac{\# R}{\# W_n}
                       = c_0(2m)^{d\eta-n} 
                       \geq c_0(2m)^{\varepsilon_0 l -(2k+1)}
 \end{split}
\end{equation*}
by (\ref{eq:k_and_l}) and (\ref{eq:d_and_n_and_ell}). 
Since each expected value does not depend on the choice of $v$, we
denote it by $\mu_l$ and $\mu_{i,n}$ respectively. 
Using (\ref{eq:chernoff_bounds}), we see that, for any given
$\varepsilon>0$,  
\begin{equation*}
 \begin{split}
  & \mathbb{P}(|X_{v,l}-\mu_l|>
    \varepsilon \mu_l ) 
   \leq 2\exp \left(
   -\frac{\varepsilon^2 c_0(2m)^{(\varepsilon_0 +k-1)l}}{3}
   \right), \\
  & \mathbb{P}(|X_{i,v,n}-\mu_{i,n}|>
    \varepsilon \mu_{i,n} ) 
   \leq 2\exp \left(
    -\frac{\varepsilon^2 c_0(2m)^{\varepsilon_0 l-(2k+1)}}{3}
    \right)  
 \end{split}
\end{equation*}
hold. 
Since $\# W_n=(2m)^n$, $n\leq kl + (2k+1)$, and 
$d(2k+1) = \varepsilon_0 + k$, we have
\begin{equation*}
 \begin{split}
  & \mathbb{P}(|X_{i,v,n}-\mu_{i,n}|>
    \varepsilon \mu_{i,n} \text{ for some }v\in W_n)  \\
  \leq
  & (2m)^n 2\exp \left(
   -\frac{\varepsilon^2 c_0 (2m)^{\varepsilon_0 l-(2k+1)}}{3}
   \right) \\
  \leq & 2\max_l \left\{(2m)^{kl+(2k+1)} \exp \left(
    -\frac{\varepsilon^2 c_0 (2m)^{\varepsilon_0 l-(2k+1)}}{6}
    \right) \right\} \\
   & \times  \exp \left( 
     -\frac{\varepsilon^2 c_0 (2m)^{\varepsilon_0 l-(2k+1)}}{6}
     \right). 
 \end{split}
\end{equation*}
Letting
\begin{equation*}
 \begin{split}
 a & = 2\max_l \left\{(2m)^{kl+(2k+1)} \exp \left(
    -\frac{\varepsilon^2 c^{-1} (2m)^{\varepsilon_0 l-(2k+1)}}{6}
    \right) \right\}, \\
 a' & =  \frac{\varepsilon^2 c^{-1}(2m)^{-(2k+1)}}{6}, 
 \end{split}
\end{equation*}
we obtain 
\begin{equation*}
    \mathbb{P}(|X_{i,v,n}-\mu_{i,n}|>
    \varepsilon \mu_{i,n} \text{ for some }v\in W_n)  \\
  \leq a \exp \left(-a' (2m)^{\varepsilon_0 l}\right)
\end{equation*}
for any $P'(m,\eta,d,c_0)$ for $c^{-1} \leq c_0 \leq c$. 
Since $k\geq 1$, 
\begin{equation*}
    \mathbb{P}(|X_{v,l}-\mu_{l}|>
    \varepsilon \mu_{l} \text{ for some }v\in W_{l})  \\
  \leq a \exp \left(-a' (2m)^{\varepsilon_0 l}\right)
\end{equation*}
holds for the same $a$ and $a'$ as above. 
Recalling (\ref{eq:df}) and (\ref{eq:F(f)}) and letting 
$a_1=3a$ and $a_2=a'$, we see that these
inequalities imply 
\begin{equation*}
  \mathbb{P}\left(
  \begin{matrix}
       P \in P'(m,\eta,d,c_0) \text{ satisfies } 
       (\ref{eq:approx_ratio2}) \\
       \text{ for } \forall Y, 
       \forall \rho \colon \Gamma_P \longrightarrow \isom(Y), \\
       \forall 
       \text{nonconstant }\rho \text{-equivariant map } f
  \end{matrix} \right) 
 \geq   1- a_1 \exp\left(-a_2(2m)^{\varepsilon_0 l}\right)
\end{equation*}
where 
\begin{equation}
 \label{eq:approx_ratio2}
  \frac{1-\varepsilon}{(1+\varepsilon)^2}RQ^G(\overline{f})
  \leq \frac{E_{\mu^{l}}(\tilde f)}{E_{\mu^{2n}}(\tilde f)} \leq
  \frac{1+\varepsilon}{(1-\varepsilon)^2}RQ^G(\overline{f}). 
\end{equation}
Thus, by taking $\varepsilon$ suitably, this implies 
(\ref{eq:prop_rayleigh_reduced}).  This completes 
the proof of Proposition \ref{prop:random_Rayleigh_quotient}.   

\begin{Remark}
\label{remark:drop_d<1/2}
 We should mention that it is also possible to prove Proposition
 \ref{prop:random_Rayleigh_quotient} using the model
 $P''(m,\eta,d,c_0)$ introduced in \S \ref{sec:remark-our-model} with
 the help of Lemma \ref{lem:relation_between_models}; 
 thus we can drop the assumption $d<1/2$ in 
 Proposition \ref{prop:random_Rayleigh_quotient}. 
 However, the appearance of $\mu^l$ and $\mu^n$ in the proof 
 becomes less natural compared with the proof presented above. 
\end{Remark}

\subsection{Proof of Proposition \ref{prop:random_eigenvalue}}
\label{proof_of_random_eigenvalue}
Our proof follows the idea of \cite{Chung-Lu-Vu}, although our model
is different from theirs and a certain modification is needed.   

As we have mentioned in \S \ref{sec:remark-our-model}, we will deal with
$P''(m,\eta,d,c_0)$ in order to prove Proposition
\ref{prop:random_eigenvalue}. 
Let $P \in P''(m, \eta, d, c_0)$ and $G_P=(V,E_P)$ the graph associated to
$P$.  Recall that $G_P$ has measures on $V \times V$ and $V$ both
denoted by $\nu$, which are used to define the inner product on the
space of functions on $V$ and the Laplacian $\Delta_P$. 
We are going to deal with the eigenvalues of $\Delta_P$, and it is 
convenient to give an expression of $\Delta_P$ by matrices. 
Let us denote $V=\{v_1, \dots, v_{(2m)^n}\}$, and
$\mathbf{e}_i$ be a function defined by
\begin{equation*}
 \mathbf{e}_i(v_j)=
 \begin{cases}
  \frac{1}{\sqrt{\nu(v_i)}} & \text{if }i=j, \\
  0                         & \text{if }i\not= j. 
 \end{cases}
\end{equation*}
Then $\{\mathbf{e}_1, \dots, \mathbf{e}_{(2m)^n}\}$ forms an orthonormal basis
of the space of functions on $V$ with respect to the inner product
$\left(\cdot, \cdot\right)_{L^2}$.  Note that
\begin{equation*}
 \Delta_P \mathbf{e}_i (v_j) = \mathbf{e}_i(v_j)
 - \sum_{v' \in V} \frac{\nu(v_j,v')}{\nu(v_j)}\mathbf{e}_i(v')
 = \frac{\delta_{ij}}{\sqrt{\nu(v_i)}} - 
    \frac{\nu(v_j,v_i)}{\nu(v_j)\sqrt{\nu(v_i)}}, 
\end{equation*}
therefore
\begin{equation*}
 \Delta_P \mathbf{e}_i = \mathbf{e}_i - 
 \sum_j \frac{\nu(v_j,v_i)}{\sqrt{\nu(v_j)\nu(v_i)}}
 \mathbf{e}_j,  
\end{equation*}
and $\Delta_P$ can be expressed as
\begin{equation*}
 \Delta_P = I - A', 
\end{equation*}
where $I$ denotes the identity matrix, and $A'$ is expressed by means of
its entries as 
\begin{equation*}
  A'=\left(\frac{\nu(v_i,v_j)}{\sqrt{\nu(v_i)\nu(v_j)}}\right). 
\end{equation*}
Note that, by definition, 
\begin{equation*}
 \frac{\nu(v_i,v_j)}{\sqrt{\nu(v_i)\nu(v_j)}}
 = \frac{\text{multiplicity of }\{v_i, v_j\}}
    {\sqrt{\mathrm{deg}(v_i)\mathrm{deg}(v_j)}}. 
\end{equation*}
Let $A$ be the multiplicity matrix (the adjacency matrix taking 
the multipicity in account) and $D$ the square root of the inverse of
the degree matrix:
\begin{equation*}
 A=\left(\text{multiplicity of }\{v_i,v_j\}\right)  \quad 
 \text{and} \quad
 D=\left(\frac{\delta_{ij}}{\sqrt{\mathrm{deg}(v_i)}}\right).   
\end{equation*}
Then we see that $A'= DAD$. 

Let $\pi_0$ be the orthogonal projection onto the space of constant
functions on $V$.  Note that 
\begin{equation*}
 M = (I-\Delta_P) (I-\pi_0)
 = I -\Delta_P - \pi_0 = DAD - \pi_0
\end{equation*}
has eigenvalues $0$ and $1 - \lambda_i(G_P)$, $i\not= 0$, where
$0=\lambda_0(G_P)\leq \lambda_1(G_P) \leq \dots \lambda_m(G_P)$ are the 
eigenvalues of 
$\Delta_P$, and eigenfunctions corresponding to $0$ are
constant functions.  
(Here we regard $M$ is given as a matrix form with respect the
orthonormal basis $\{\mathbf{e}_i\}$. Note that the eigenfunctions of
$M$ are the same as that of $\Delta_P$.) 
Thus we have
\begin{equation}
 \label{eq:eigenvalue_estimate}
 \max_{i\not= 0} |1-\lambda_i(G_P)| \leq \Vert M \Vert, 
\end{equation}
where $\Vert M \Vert$ denotes the operator norm of $M$.

We regard $\Vert M \Vert=\Vert M_P \Vert$ as a random variable 
on $P''(m,\eta,d,c_0)$
and will show that, for any $\varepsilon'>0$, there exist positive
constants $a_1$ and $a_2$ depending only on $m$, $k$,
$c$, and $\varepsilon'$ (and, in particular, independent of $c_0$)
and $a_3$ depending on $m$, $k$, $\varepsilon'$, and $\alpha$
such that
\begin{equation}
 \label{eq:approx_eigenvalue}
 \mathbb{P}_{P''}(\Vert M \Vert > \varepsilon') 
 \leq   a_1 \exp\left(-a_2(2m)^{\varepsilon_0 l}\right)
        + a_3(2m)^{-(\alpha \varepsilon_0 - k) l}
\end{equation}
holds, where $\alpha$ is an arbitrary natural number and 
$\varepsilon_0=d(2k+1)-k$.  
Once (\ref{eq:approx_eigenvalue}) is proven, recalling Lemma 
\ref{lem:relation_between_models} and 
$d\eta \leq (k+\varepsilon_0)l +2(k+\varepsilon_0)$,  
we obtain
\begin{equation*}
\begin{split}
 & \mathbb{P}_{P}(\Vert M \Vert > \varepsilon') \\
 \leq & 3 \sqrt{c_0(2m)^{d\eta}} 
      \mathbb{P}_{P''}(\Vert M \Vert > \varepsilon') \\
 \leq & 3 c^{1/2} (2m)^{(k+\varepsilon_0)l + 2(k+\varepsilon_0)} 
 \left(a_1 \exp\left(-a_2(2m)^{\varepsilon_0 l}\right)
        + a_3(2m)^{-(\alpha \varepsilon_0 -k) l} \right)\\
 \leq & a_1' (2m)^{(k+\varepsilon_0)l} 
     \exp\left(-a_2(2m)^{\varepsilon_0 l}\right)
     + a_3'(2m)^{-(\alpha \varepsilon_0 - (2k+\varepsilon_0))l}. 
\end{split}
\end{equation*}
By choosing $\alpha > (2k+\varepsilon_0)/\varepsilon_0$
and setting $\alpha_0 = \alpha - (2k+\varepsilon_0)/\varepsilon_0$, we
see that
\begin{equation*}
 \mathbb{P}_P (\Vert M \Vert \leq \varepsilon')  \geq
 1 -  a_1' (2m)^{(k+\varepsilon_0)l} \exp\left(-a_2(2m)^{\varepsilon_0 l}\right)
   - a_3''(2m)^{-(\alpha_0 \varepsilon_0)l}
 \to 1
\end{equation*}
as $\eta \to \infty$, where $a_1'$, $a_3''$, and $\alpha_0$ depends only
on $m$, $k$, $c$, $\varepsilon'$, and $\alpha$ chosen as above (and
independent of $c_0$). 
Together with (\ref{eq:eigenvalue_estimate}),
this  implies Proposition \ref{prop:random_eigenvalue}. 
In what follows we will deal only with $P''(m,\eta,d,c_0)$, and will
drop subscript $P''$ of $\mathbb{P}_{P''}$, since there will be no
chance of confusion. 

The first step of the proof of (\ref{eq:approx_eigenvalue}) is to
estimate $\Vert B \Vert$ of an operator $B$ defined below, which
approximates $M$ with high probability. 
Recall our random graph has $\#V=(2m)^{n}$ vertices and, 
for each pair of vertices, the multiplicity of the edge joining them is
at most $(2m)^l=\# W_l$.  Since each word in $W_{\eta}$ is chosen with
probability $(c_0(2m)^{d\eta})/(2m)^{\eta}$, 
the multiplicity of an edge has the binomial distribution 
$\mathrm{Bi}((2m)^l, (c_0(2m)^{d\eta})/(2m)^{\eta})$, 
and the expected multiplicity $\mu_e$ of each 
edge is $(2m)^{l} \times (c_0(2m)^{d\eta})/(2m)^{\eta}$.  
Thus, by recalling (\ref{eq:k_and_l}) and (\ref{eq:d_and_n_and_ell}), 
the expected multiplicity $\mu_e$ can be estimated as  
\begin{equation*}
 \begin{split}
 \mu_e & =  c_0(2m)^{l+(d-1)(2n+l)}  \\
 & \leq 2c_0(2m)^{d(2k+1)l + 2d(2k+1) - (2k+1)l+l} \\
 & = 2c_0(2m)^{-(k-\varepsilon_0)l + 2d(2k+1)}.   
 \end{split}
\end{equation*}
On the other hand, since a vertex is given as the first (or the last)
$n$ letters of elements of $W_{\eta}$, it is followed by (or follows)
the elements of $W_{n+l}$. 
Therefore, for any $v \in W_{n}$, the random variable $Y_{i,v,n}$ on
$P''(m,\eta,d,c_0)$ defined by
\begin{equation*}
 Y_{i,v,n}(P)= \# \{w \in R \mid w_i=v\}
\end{equation*}
has the binomial distribution 
$\mathrm{Bi}((2m)^{n+l}, (c_0 (2m)^{d\eta})/(2m)^{\eta})$.  Since the
degree of a vertex $v$ is given by
$\mathrm{deg}(v)=Y_{1,v,n}+Y_{2,v,n}$,  the expected degree $\mu_v$ of a
vertex is given by 
$\mu_v = 2 (2m)^{n+l} \times (c_0(2m)^{d\eta})/(2m)^{\eta}$. Thus, 
again by (\ref{eq:k_and_l}) and (\ref{eq:d_and_n_and_ell}), $\mu_v$ can
be estimated as
\begin{equation*}
\mu_v = 2 c_0(2m)^{d\eta}(2m)^{-n} \geq
2c_0(2m)^{\varepsilon_0 l - (2k+1)}.  
\end{equation*}

Let a matrix $B$ be 
\begin{equation*}
 B= \frac{1}{\mu_v} 
    \left(A - \mu_eK\right), 
\end{equation*}
where $K$ is a matrix all of whose entries are $1$. Note that each
entries of the matrix $A-\mu_e K$ is equal to 
\begin{equation*}
 (\text{multiplicity of } \{v_i,v_j\} )
 - \mu_e, 
\end{equation*}
and hence, writing $B=(b_{ij})$, we see that $\mathbb{E}(b_{ij})=0$ for
each $(i,j)$. 
We first show that the eigenvalue of $B$ approaches to $0$ as 
$l \to \infty$. Note that, since $B$ is symmetric
\begin{equation*}
 \mathrm{trace}(B^{2\alpha}) = 
 \lambda_1(B)^{2\alpha} + \dots \lambda_{(2m)^n}(B)^{2\alpha}
\end{equation*}
for any positive integer $\alpha$, 
where $\lambda_i(B)$'s  are eigenvalues of $B$ (counted with
multiplicity).  Let $\lambda_1(B)$ be the eigenvalue
of $B$ with maximum absolute value, which is equal to the operator norm
of $B$: $\Vert B \Vert =|\lambda_1(B)|$. 
Then  we have
\begin{equation*}
 \mathbb{E}(\Vert B \Vert^{2\alpha})=
 \mathbb{E}(\lambda_1(B)^{2\alpha}) \leq 
 \mathbb{E}(\mathrm{trace}(B^{2\alpha})). 
\end{equation*}
The $i$th diagonal entry of $B^{2\alpha}$ is a sum of terms of the form
$b_{ii_1}b_{i_1i_2} \dots b_{i_{2\alpha-1}i}$, which corresponds to a
closed path of length $2\alpha$ in the complete graph (with self-loops)
having $V$ as the vertex set with basepoint $v_i$.  

Note that $b_{ij}$'s are independent random variables, and that the mean
of $b_{ij}$ is $0$ as we have mentioned above. Therefore the expected
value of the diagonal entries $B^{2\alpha}$ involves only 
closed paths of length $2\alpha$ all of whose edges are visited at least
twice.  Such a closed path is called a {\it closed good walk} in
\cite{Furedi-Komlos}.  
Now we see that $\mathbb{E}(\mathrm{trace}(B^{2\alpha}))$ is a sum of
terms of the form
\begin{equation*}
 \mathbb{E}\left(\prod b_{ij}^{\alpha_{ij}}\right) = 
 \prod \mathbb{E}(b_{ij}^{\alpha_{ij}}), 
\end{equation*}
where $\alpha_{ij}$'s are positive integers greater than $1$ and their
sum is equal to $2\alpha$.  
The multiplicity of each edge is distributed binomially with mean
$\mu_e$ and, by taking $l$ suitably large, we may assume
$\mu_e \leq 2c_0(2m)^{-(k-\varepsilon_0)l+2d(2k+1)}<1$.  
Therefore, we can apply Lemma \ref{prop:moment_estimate} to obtain 
\begin{equation*}
 \mathbb{E}(b_{ij}^{\alpha_{ij}})
  = \frac{1}{\mu_v^{\alpha_{ij}}}
   \mathbb{E}\left(\left(
      \text{multiplicity of }\{v_i,v_j\}
     -\mu_e\right) ^{\alpha_{ij}} \right)\\
  \leq c(\alpha_{ij}) \frac{1}{\mu_v^{\alpha_{ij}}} \mu_e. 
\end{equation*}
Let $W(\alpha, \beta)$ be the set of all closed good walks of length
$2\alpha$ which pass through $\beta+1$ distinct vertices. 
If $b_{ii_1}b_{i_1i_2} \dots b_{i_{2\alpha-1}i}$ corresponds to a walk
in $W(\alpha, \beta)$, then there appear at least $\beta$ distinct
$b_{ij}$'s.  Since we may assume $\mu_e<1$, we get 
\begin{equation}
\label{eq:for_a_walk}
 \begin{split}
 \prod
 \mathbb{E}(b_{ij}^{\alpha_{ij}})
 & \leq c(\alpha) \frac{1}{\mu_v^{2\alpha}} \mu_e^{\beta} \\
 & \leq 2^{-2\alpha}c(\alpha) c_0^{\beta-2\alpha}
   (2m)^{-2\alpha \varepsilon_0 l + 2\alpha(2k+1)}
      (2m)^{-\beta (k-\varepsilon_0)l+2d\beta(2k+1) }
 \end{split}
\end{equation}
for some constant $c(\alpha)$. 
Note that 
\begin{equation*}
\begin{split}
 & \# W(\alpha,\beta) \\
 \leq &
 \# (\text{choice of }\beta+1 \text{ vertices from }V) \\
 & \times \# (\text{good walks with length }2\alpha
   \text{ in the complete graph with }
     \beta + 1 \text{ vertices}) \\
 \leq & \#V^{\beta+1} c(\alpha,\beta)
 = (2m)^{(\beta+1)n}c(\alpha, \beta) 
 \leq c(\alpha,\beta)(2m)^{(\beta+1)(kl + (2k+1))}
 \end{split}
\end{equation*}
holds, where, clearly,  $c(\alpha,\beta)$ is a constant depending only
on $\alpha$ and $\beta$. 
(See \cite{Furedi-Komlos} for a sharp bound of $W(\alpha,\beta)$.) 

Thus, noting $c^{-1}\leq c_0 \leq c$ and summing up
(\ref{eq:for_a_walk}) over all good walks of length $2\alpha$ gives 
\begin{equation*}
\begin{split}
 & \mathbb{E}(\mathrm{trace}(B^{2\alpha})) \\
 & \leq 2^{-2\alpha} c(\alpha) 
    \sum_{\beta = 0}^{\alpha} 
    c^{2\alpha-\beta}
 \# W(\alpha,\beta) (2m)^{-2\alpha \varepsilon_0 l}
    (2m)^{-\beta (k-\varepsilon_0)l} 
     (2m)^{(2\alpha + 2d\beta)(2k+1)}\\
 & \leq \left(\frac{c}{2}\right)^{2\alpha}c(\alpha)  
   \sum_{\beta=0}^{\alpha} c(\alpha,\beta)
   (2m)^{(\beta+1)(kl+(2k+1))}  (2m)^{(2\alpha+2d\beta)(2k+1)} \\
  & \phantom{\leq \left(\frac{c}{2}\right)^{2\alpha}c(\alpha)  
   \sum_{\beta=0}^{\alpha}}
   \times (2m)^{-2\alpha \varepsilon_0 l-\beta(k-\varepsilon_0)l} \\
 & \leq  \left(\frac{c}{2}\right)^{2\alpha} c(\alpha)
     \sum_{\beta =0}^{\alpha}c(\alpha,\beta) 
    (2m)^{(\beta+1 +2\alpha+2d\beta)(2k+1)}
    (2m)^{(\beta+1)kl-2\alpha\varepsilon_0 l-\beta(k-\varepsilon_0) l} \\
 & \leq \left(\frac{c}{2}\right)^{2\alpha} c(\alpha)(2m)^{(5\alpha+1)(2k+1)}
     \sum_{\beta=0}^{\alpha} c(\alpha,\beta) 
    (2m)^{kl-(2\alpha - \beta)\varepsilon_0 l}, 
\end{split}
\end{equation*}
where we have used $\beta \leq \alpha$ to deduce the last inequality. 
By $\beta \leq \alpha$ again, and by setting 
$c'(\alpha)=2^{-2\alpha} c(\alpha)c^{2\alpha}(2m)^{(5\alpha+1)(2k+1)}
\sum_{\beta} c(\alpha,\beta)$, 
we get 
\begin{equation*}
  \mathbb{E}(\mathrm{trace}(B^{2\alpha}))
  \leq c'(\alpha) (2m)^{kl - \alpha \varepsilon_0 l}. 
\end{equation*}
Thus, by Markov's inequality (\ref{eq:Markov_ineq}), 
for any $\varepsilon>0$, we get
\begin{equation*}
 \mathbb{P}(\Vert B \Vert^{2\alpha} \geq \varepsilon^{2\alpha}) 
  \leq \frac{\mathbb{E}(\Vert B \Vert^{2\alpha})}{\varepsilon^{2\alpha}}
  \leq \frac{\mathbb{E}(\mathrm{trace}(B^{2\alpha}))}
             {\varepsilon^{2\alpha}}
  =\frac{c'(\alpha)}{\varepsilon^{2\alpha}}
   (2m)^{kl -\alpha \varepsilon_0 l}, 
\end{equation*}
and hence
\begin{equation}
 \label{eq:prob_|B|}
 \mathbb{P}(\Vert B \Vert \geq \varepsilon) 
  \leq \frac{c'(\alpha)}{\varepsilon^{2\alpha}}
   (2m)^{-(\alpha \varepsilon_0-k) l}. 
\end{equation}

Next we will show that $B$ actually approximates $M$ with high
probability. Now take a matrix $M'$ to be
\begin{equation*}
 M' = D \left(A - \mu_e K\right) D. 
\end{equation*}
Writing $M'=(m_{ij}')$, we see that
\begin{equation*}
 m_{ij}'= \frac{\mu_v}
      {\sqrt{\mathrm{deg}(v_i)\mathrm{deg}(v_j)}} b_{ij}. 
\end{equation*}
Let $f \colon V \longrightarrow \R$ and write 
$f=\sum_i f_i \mathbf{e}_i$.  
Since $M'$ is symmetric, 
\begin{equation*}
 \begin{split}
  \Vert M' \Vert 
  & = \max_{\vert f \vert=1}
     \left| \left( f, M' f\right) \right| \\
  & = \max_{\vert f \vert = 1}
     \left| \sum_i f_i \left(
      \sum_j \frac{\mu_v}
                   {\sqrt{\mathrm{deg}(v_i)\mathrm{deg}(v_j)}}
         b_{ij}f_j \right) \right| \\
  & = \max_{\vert f \vert=1}
      \left| \sum_i \frac{\sqrt{\mu_v}}
                   {\sqrt{\mathrm{deg}(v_i)}} f_i
         \left(
      \sum_j b_{ij}\frac{\sqrt{\mu_v}}
                   {\sqrt{\mathrm{deg}(v_j)}} f_j \right) \right| \\ 
  & = \max_{\vert f \vert=1} \left|\left(f', Bf' \right) \right| \\
  & \leq \max_{\vert f \vert=1} \Vert B \Vert \vert f' \vert^2, 
 \end{split}
\end{equation*}
where $f'=\sum_i f_i' \mathbf{e}_i$  is defined by
\begin{equation*}
 f_i' = \frac{\sqrt{\mu_v}}
                   {\sqrt{\mathrm{deg}(v_i)}} f_i. 
\end{equation*}
By $\mathrm{deg}(v)=Y_{1,v,n}+Y_{2,v,n}$ and (\ref{eq:chernoff_bounds}),
we know that, for any $\varepsilon>0$, there is a constant $a$, and 
\begin{equation}
 \label{eq:deg_conc}
 \begin{split}
 & \mathbb{P}(|\mathrm{deg}(v) - \mu_v| > 
   \varepsilon \mu_v \text{ for some }v \in V) \\
 < & 2\# V \times 2 \exp \left(-\frac{\varepsilon^2 \mu_v}{3}\right) \\
 = & 4(2m)^n \exp \left(
   -\frac{\varepsilon^2 2c_0 (2m)^{\varepsilon_0 l-(2k+1)}}{3}
   \right) \\
 \leq & 2a \exp \left(-a' (2m)^{\varepsilon_0 l}\right), 
 \end{split}
\end{equation}
where $a$, $a'$ can be taken as the same constants as the preceding
subsection. 
Thus we have, for any given $\varepsilon>0$, 
\begin{equation}
\label{eq:M'_and_B}
   \Vert M' \Vert \leq \Vert B \Vert (1- \varepsilon)^{-1}
\end{equation}
with probability $1-2a \exp \left(-a' (2m)^{\varepsilon_0 l}\right)$,
which is independent of $c_0$ (but depends on $m$, $k$, $c$, and 
$\varepsilon$).  

On the other hand, since $M=DAD-\pi_0$, we see that
\begin{equation*}
 M-M' = \mu_e DKD - \pi_0. 
\end{equation*}
With respect to the orthonormal basis
$\{\mathbf{e}_i\}$, $\pi_0$ can be expressed as
\begin{equation*}
 \pi_0 = 
 \begin{pmatrix}
  \sqrt{\nu(v_1)} \\
  \vdots \\
  \sqrt{\nu(v_{(2m)^{n}})}
 \end{pmatrix}
 \begin{pmatrix}
  \sqrt{\nu(v_1)} & \cdots & \sqrt{\nu(v_{(2m)^{n}})}
 \end{pmatrix}
  = \left(
   \frac{\sqrt{\mathrm{deg}(v_i)\mathrm{deg}(v_j)}}
     {2\# R}
   \right), 
\end{equation*}
and hence
\begin{equation*}
 \begin{split}
 M-M' 
 & = \left(
  \frac{\mu_e}
     {\sqrt{\mathrm{deg}(v_i)\mathrm{deg}(v_j)}}
  - \frac{\sqrt{\mathrm{deg}(v_i)\mathrm{deg}(v_j)}}
     {2\# R}
 \right) \\
 & = \frac{1}{2\# R}
   \left(
    \frac{2\# R \mu_e -
          \mathrm{deg}(v_i)\mathrm{deg}(v_j)}
          {\sqrt{\mathrm{deg}(v_i)\mathrm{deg}(v_j)}}
   \right). 
 \end{split}
\end{equation*}
 Therefore we obtain
 \begin{equation*}
 \begin{split}
 & \Vert M-M' \Vert \\
 = & \max_{\vert f \vert=1} 
     \left|\left(f, (M-M')f\right) \right| \\
 = & \frac{1}{2\# R}
     \max_{\vert f \vert=1} \left| \sum_i f_i
    \left( \sum_j
    \frac{2\# R \mu_e - \mathrm{deg}(v_i)\mathrm{deg}(v_j)}
          {\sqrt{\mathrm{deg}(v_i)\mathrm{deg}(v_j)}} f_j
   \right) \right| \\
 = & \frac{1}{2\# R}
     \max_{\vert f \vert=1} \left| 
    \sum_i \sqrt{\mathrm{deg}(v_i)}f_i
    \left( \sum_j \left(
     \frac{2\# R \mu_e}{\mathrm{deg}(v_i)\mathrm{deg}(v_j)}  
     -1 \right)
      \sqrt{\mathrm{deg}(v_j)}f_j
   \right) \right|. 
 \end{split}
 \end{equation*}
 Recall that, by definition, $\# R$ is a binomially distributed random
 variable with distribution 
 $\mathrm{Bi}((2m)^{\eta}, (c_0(2m)^{d\eta})/(2m)^{\eta})$. Thus the
 expected value of $\# R$ is equal to $c_0(2m)^{d\eta}$. Noting that
\begin{equation*}
 2c_0(2m)^{d\eta} \mu_e
 = 2c_0(2m)^{d\eta} \times 2c_0 (2m)^{d\eta -2n}
 = 4c_0^2 (2m)^{2d\eta}(2m)^{-2n}
 = \mu_v^2 
\end{equation*}
and (\ref{eq:chernoff_bounds}), we see that, for any $\varepsilon > 0$, 
\begin{equation*}
 \begin{split}
  \mathbb{P}(\vert 2\# R \mu_e - \mu_v^2 \vert > \varepsilon \mu_v^2) 
 = &  \mathbb{P}(\vert \#R - c_0(2m)^{d\eta} \vert > 
              \varepsilon c_0(2m)^{d\eta}) \\
 \leq &  2 \exp \left(
     -\frac{\varepsilon^2 c_0(2m)^{d\eta}}{3}
    \right) \\
 \leq & 2 \exp \left(
    -\frac{\varepsilon^2c_0}{3} (2m)^{kl+\varepsilon_0 l}
    \right) \\
 \leq & b \exp \left(-b' (2m)^{\varepsilon_0 l}\right)
 \end{split} 
\end{equation*}
holds, where
\begin{equation*}
 b= 2, \quad b'=\frac{\varepsilon^2 c^{-1}}{3}. 
\end{equation*}
Note that we may assume $2a>b$ and $a'<b'$, which implies
\begin{equation}
\label{eq:sharp_R}
 \mathbb{P}(\vert 2\# R \mu_e - \mu_v^2 \vert > \varepsilon \mu_v^2) 
 < 2a \exp \left(-a' (2m)^{\varepsilon_0 l}\right). 
\end{equation}
By (\ref{eq:deg_conc}) and (\ref{eq:sharp_R}), we see that, for any
given $\varepsilon>0$, 
\begin{equation*}
 \begin{split}
 & \Vert M-M' \Vert  \\
 \leq & \frac{1}{2\# R}
     \max_{\vert f \vert=1} \left|
    \sum_i \sqrt{\mathrm{deg}(v_i)}f_i
    \left(\left(\frac{1+\varepsilon}{(1-\varepsilon)^2}-1\right) 
      \sum_j \sqrt{\mathrm{deg}(v_j)}f_j
   \right) \right| \\
 \leq & \frac{1}{2\# R}
     \max_{\vert f \vert=1} \sqrt{\sum_i \mathrm{deg}(v_i)}
     \vert f \vert 
    \left(\frac{3\varepsilon-\varepsilon^2}{(1-\varepsilon)^2}
     \sqrt{\sum_j \mathrm{deg}(v_j)} \vert f \vert
   \right)  \\
 = & \frac{1}{2\# R} \left(\frac{\varepsilon}{1-\varepsilon} + 
          \frac{2\varepsilon}{(1-\varepsilon)^2}\right)
    \sum_{i} \mathrm{deg}(v_i) \\
 = & \frac{\varepsilon}{1-\varepsilon} + 
          \frac{2\varepsilon}{(1-\varepsilon)^2}
 \end{split}
\end{equation*}
with probability $1-4a \exp (-a' (2m)^{\varepsilon_0 l})$,
where Cauchy-Schwarz inequality is used in the second inequality and 
$\sum_j \mathrm{deg}(v_j)=2\# E_P= 2\# R$ implies the last equality. 
Thus, together with (\ref{eq:M'_and_B}), for any $\varepsilon>0$, we
see that 
\begin{equation*}
 \Vert M \Vert \leq \Vert M' \Vert + \Vert M- M' \Vert 
 \leq \frac{\Vert B \Vert}{1- \varepsilon} +
 \frac{\varepsilon}{1-\varepsilon} + 
          \frac{2\varepsilon}{(1-\varepsilon)^2}
\end{equation*}
holds with probability greater than 
$1-6a \exp\left(-a' (2m)^{\varepsilon_0 l}\right)$, 
which is independent of $c_0$ (but depends on $m$, $k$,  $c$, and
$\varepsilon$).  Recall that (\ref{eq:prob_|B|}) tells us that we may
assume $\Vert B \Vert < \varepsilon$ with high probability. 
By taking $\varepsilon < 1/2$, we may assume
\begin{equation*}
   \frac{2\varepsilon}{1-\varepsilon} + 
          \frac{2\varepsilon}{(1-\varepsilon)^2} < 12\varepsilon. 
\end{equation*}
Thus we obtain, by taking (\ref{eq:prob_|B|}) in account,  
\begin{equation*}
 \mathbb{P}\left(
 \Vert M \Vert \leq 12\varepsilon
 \right)
 \geq
 1 - a_1 \exp\left(-a_2 (2m)^{\varepsilon_0 l}\right)
  - a_3(2m)^{-(\alpha \varepsilon_0 -k) l}, 
\end{equation*}
where
\begin{equation*}
 a_1=6a,\  a_2=a', \ a_3=\frac{c'(\alpha)}{\varepsilon^{2\alpha}}. 
\end{equation*}
By taking $\varepsilon= \varepsilon'/12$, this completes the proof of 
(\ref{eq:approx_eigenvalue}).

\section{Random quotients}
\label{sec:random-quotient}

In the proof of Propositions \ref{prop:random_eigenvalue} and
\ref{prop:random_Rayleigh_quotient}, we did not use the fact that
$\Gamma$ is a free group generated by $S$; $\Gamma$ can be any group
generated by $S$ (see \cite[IV.c]{Ollivier0} for related
comments). Therefore what we have actually proven is the 
following fixed-point theorem for random quotients:

\begin{Theorem}
 \label{thm:main_Hilbert_2}
 Let $\mathcal{H}$ be a Hilbert space.  Suppose that $\Gamma$ is a group
 generated by $S=\{s_1, \dots, s_m, s_1^{-1}, \dots, s_m^{-1}\}$, and
 let $\Gamma_P'=\Gamma/\bar{R}$ for $P \in P(m,\eta,d)$, where $\bar{R}$
 denotes the normal closure of the subset of $\Gamma$ determined by $R$. 
 For $1/3<d$, 
\begin{equation*}
 \lim_{\eta\to \infty} 
 \frac{\#\{P \in P(m,\eta,d) \mid 
        \Gamma_P' \text{ has }F(\mathcal{H})\}}
      {\# P(m,\eta,d)}= 1
\end{equation*}
 holds. 
\end{Theorem}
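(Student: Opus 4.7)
The plan is to observe that the derivation of Theorem~\ref{thm:main_Hilbert} in \S\ref{sec:proof} rests on Propositions~\ref{prop:random_eigenvalue} and \ref{prop:random_Rayleigh_quotient} together with Theorem~\ref{thm:n-step_ineq}, and that none of these three ingredients actually requires $\Gamma$ to be the free group on $S$. Thus the proof will carry over almost verbatim once I verify that each ingredient remains valid for an arbitrary group $\Gamma$ generated by $S$, and then rerun the final deduction for $Y = \mathcal{H}$.

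First, I would revisit Proposition~\ref{prop:random_eigenvalue}. The graph $G_P = (V, E_P)$ is defined purely in terms of the splitting $w = w_1 u w_2^{-1}$ of relators $w \in R$ with $w_i \in W_n$ and $u \in W_l$, and has vertex set $V = W_n$; the Laplacian, its spectrum, and all of the probabilistic estimates of \S\ref{proof_of_random_eigenvalue} are thus purely combinatorial and do not see $\Gamma$ at all. Hence Proposition~\ref{prop:random_eigenvalue} transfers without change. For Proposition~\ref{prop:random_Rayleigh_quotient}, I would trace through the proof in \S\ref{sec:proof-prop-RQ} and note that $\Gamma$ enters only via: (i) the projection $\pi_P \colon \Gamma \longrightarrow \Gamma_P'$, defined for any $\Gamma = \langle S \rangle$ by quotienting by $\bar R$; (ii) the standard random walk $\mu$ on $\Gamma$ associated to $S$, which satisfies finite support, $\Gamma$-invariance, symmetry, and normalization for any such $\Gamma$; and (iii) the identity $\pi_P(\overline{w_1 u}) = \pi_P(\overline{w_2})$ forced by $w = w_1 u w_2^{-1} \in R$, which holds by the very definition of $\bar R$. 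At no point is injectivity of the map $W_l \longrightarrow \Gamma$ invoked: if several words represent the same $\gamma \in \Gamma$, the corresponding contributions simply accumulate inside $\sum_{\gamma \in \Gamma}$ and are absorbed by the convolution structure of $\mu^l$ and $\mu^n$.

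With both propositions in hand, the conclusion follows as in the Hilbert part of \S\ref{sec:proof}: since $RQ^G(\overline f) \geq \lambda_1(G_P)$ when $Y = \mathcal{H}$, combining the two propositions yields, with probability tending to $1$ as $\eta \to \infty$, the bound $E_{\mu^{2n}}(\tilde f) \leq (1 + \varepsilon') E_{\mu^l}(\tilde f)$ for every $\tilde\rho$-equivariant $\tilde f \colon \Gamma \longrightarrow \mathcal{H}$, where $\tilde\rho = \rho \circ \pi_P$ and $\rho \colon \Gamma_P' \longrightarrow \isom(\mathcal{H})$ is arbitrary. Applying Theorem~\ref{thm:n-step_ineq} with $k = 1$, whose hypotheses reduce to $d > k/(2k+1) = 1/3$ and $1 + \varepsilon' \leq 2 - \varepsilon''$ (both arranged by taking $\varepsilon'$ sufficiently small), produces a fixed point for $\tilde\rho(\Gamma) = \rho(\Gamma_P')$, which is precisely $F(\mathcal{H})$ for $\Gamma_P'$. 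The main (mild) obstacle is the bookkeeping in item (iii) above: one must confirm that every summation appearing in the proof of Proposition~\ref{prop:random_Rayleigh_quotient} is genuinely indifferent to relations in $\Gamma$, i.e.\ that all such sums are organized either by words on the graph side or by $\Gamma$-elements via the random walk on the energy side, rather than by a word/group-element bijection that would hold only in the free case.
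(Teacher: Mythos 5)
Your proposal is correct and is essentially the paper's own argument: the paper proves Theorem~\ref{thm:main_Hilbert_2} precisely by observing (in \S\ref{sec:random-quotient}) that the proofs of Propositions~\ref{prop:random_eigenvalue} and \ref{prop:random_Rayleigh_quotient} nowhere use freeness of $\Gamma$, and then rerunning the Hilbert-space deduction of \S\ref{sec:proof} with $k=1$, exactly as you do. The one point you should make explicit is the range of $d$: the theorem imposes no upper bound on $d$, whereas Proposition~\ref{prop:random_Rayleigh_quotient} as stated assumes $d<1/2$ (used only to identify $P(m,\eta,d,c_0)$ with the sampling model $P'(m,\eta,d,c_0)$), so for $d\geq 1/2$ you must appeal to the variant of that proposition proved via the Bernoulli model $P''(m,\eta,d,c_0)$ and Lemma~\ref{lem:relation_between_models}, as indicated in Remark~\ref{remark:drop_d<1/2}.
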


\begin{Theorem}
\label{thm:main_2}
 Let $Y$ be a $\cat$ space.  Suppose that $\Gamma$ is a group
 generated by $S=\{s_1, \dots, s_m, s_1^{-1}, \dots, s_m^{-1}\}$, and
 let $\Gamma_P'=\Gamma/\bar{R}$ for $P \in P(m,\eta,d)$, where $\bar{R}$
 denotes the normal closure of the 
 subset of $\Gamma$ determined by $R$. 
 Take $0\leq \delta<1$ and an integer $k$ so that $k>1/(1-\delta)$.  
 Let $\mathcal{Y}_{\delta}$ be the class of $\cat$ spaces with 
 $\delta(Y) \leq \delta$. 
 Then, for $k/(2k+1)<d$, 
\begin{equation*}
 \lim_{\eta\to \infty} 
 \frac{\#\{P \in P(m,\eta,d) \mid 
        \Gamma_P' \text{ has }F(Y) 
        \text{ for all }Y  \in \mathcal{Y}_{\delta}\}}
      {\# P(m,\eta,d)}= 1
\end{equation*}
 holds. 
\end{Theorem}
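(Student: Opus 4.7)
The plan is to rerun the proof of Theorem~\ref{thm:main} essentially verbatim, with the free group replaced by an arbitrary group $\Gamma$ generated by $S$ and $\Gamma_P$ replaced by the quotient $\Gamma_P' = \Gamma/\bar R$. The statement of Theorem~\ref{thm:main_2} drops the upper bound $d < 1 - \frac{1}{2}\log_{2m}(8m-4)$ that appeared in Theorem~\ref{thm:main} precisely because that bound was invoked only via Ollivier's hyperbolicity theorem, which was used to establish infiniteness (and hyperbolicity) of $\Gamma_P$; here we claim only fixed-point property, so we keep only the lower bound $k/(2k+1) < d$.

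First I would observe that the graph $G_P = (V, E_P)$ constructed in Section~\ref{sec:graph-assoc-pres} is built out of \emph{words}: its vertex set is $W_n$, and two words $v_1, v_2 \in W_n$ are joined by an edge whenever $v_1 u v_2^{-1} \in R$ for some $u \in W_l$. Neither the graph nor its Laplacian $\Delta_P$ depends on the group structure of $\Gamma$; hence Proposition~\ref{prop:random_eigenvalue}, asserting $\lambda_1(G_P) > 1 - \varepsilon$ with high probability, transfers without change. Next, Theorem~\ref{thm:n-step_ineq} and the energy machinery of Section~\ref{sec:energy-equiv-maps} are already stated for \emph{any} finitely generated group equipped with the standard random walk; in particular, applying them to our group $\Gamma$ (not necessarily free) and to the lifted representation $\tilde\rho = \rho \circ \pi_P \colon \Gamma \to \isom(Y)$ is legitimate.

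The key verification concerns Proposition~\ref{prop:random_Rayleigh_quotient}. Its proof expresses, for any $\rho$-equivariant $f \colon \Gamma_P' \to Y$ with lift $\tilde f = f \circ \pi_P$ and induced $\bar f \colon V \to Y$, both $\|d\bar f\|^2_{L^2}$ and $F(\bar f)$ as sums over pairs $(v, v') \in W_n \times W_n$, then matches these with $E_{\mu^l}(\tilde f)$ and $E_{\mu^{2n}}(\tilde f)$ through the identity $\mu^k(e, \gamma) = \#\{w \in W_k : \bar w = \gamma\}/(2m)^k$. This identity describes the standard random walk on \emph{any} group generated by $S$, so it continues to hold for our $\Gamma$. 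The only group-theoretic input used is the implication $v_1 u v_2^{-1} \in R \Rightarrow \pi_P(\overline{v_1 u}) = \pi_P(\overline{v_2})$, which is automatic in the quotient $\Gamma/\bar R$. The Chernoff-bound estimates for the random variables $X_{v,l}$ and $X_{i,v,n}$, as well as the moment computations of Section~\ref{sec:some-facts-from}, concern only the random subset $R \subset W_\eta$ and are thus unaffected.

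The main (and essentially only) obstacle is to reread the proof of Proposition~\ref{prop:random_Rayleigh_quotient} and confirm that its bookkeeping — in particular the step that lifts sums over $\gamma_P \in \Gamma_P'$ to sums over $\gamma \in \Gamma$ via fibers of $\pi_P$ — uses only that $\pi_P$ is a surjective homomorphism, not that $\Gamma$ is free. Once this is checked, the argument of Section~\ref{sec:proof} applies verbatim: with high probability one obtains $E_{\mu^{2n}}(\tilde f) \leq (2k - \varepsilon'')\, E_{\mu^l}(\tilde f)$ for every $Y \in \mathcal{Y}_\delta$, every $\rho \colon \Gamma_P' \to \isom(Y)$, and every $\tilde\rho$-equivariant $\tilde f$, so Theorem~\ref{thm:n-step_ineq} furnishes a fixed point of $\rho(\Gamma_P') = \tilde\rho(\Gamma)$. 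This yields Theorem~\ref{thm:main_2}; Theorem~\ref{thm:main_Hilbert_2} follows from the same argument with $k = 1$ and the sharper lower bound $RQ^G(\bar f) \geq \lambda_1(G_P)$ available for Hilbert targets.
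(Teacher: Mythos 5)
Your proposal is correct and matches the paper's own argument: the paper proves Theorem~\ref{thm:main_2} precisely by remarking that the proofs of Propositions~\ref{prop:random_eigenvalue} and \ref{prop:random_Rayleigh_quotient} never use freeness of $\Gamma$ (only that $S$ generates it and that $\pi_P$ is a surjective homomorphism), so the fixed-point argument of \S\ref{sec:proof} goes through verbatim, with the upper bound on $d$ dropped because it served only to guarantee infiniteness via Ollivier's hyperbolicity theorem. Your more detailed check of where the word-combinatorics versus the group structure enters is exactly the right verification.
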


In the theorems above, we cannot say $\Gamma$
is infinite in general.  What is known in this direction is the
following theorem for hyperbolic groups due to Ollivier:

\begin{Theorem}[{\cite[Theorem 4]{Ollivier}}]
 Let $\Gamma$ be a hyperbolic group with harmless torsion generated by
 $S=\{s_1, \dots, s_m, s_1^{-1}, \dots, s_m^{-1}\}$ and $\theta$ the
 gross cogrowth of $\Gamma$.  Suppose that $d< 1-\theta$. Then we have
\begin{equation*}
 \lim_{\eta\to \infty} 
 \frac{\#\{P \in P(m,\eta,d) \mid \Gamma_P'
           \text{ is non-elementary hyperbolic}\}}
      {\# P(m,\eta,d)} = 1. 
\end{equation*}
\end{Theorem}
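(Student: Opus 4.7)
The plan is to adapt Gromov's density argument from free groups to arbitrary hyperbolic groups with harmless torsion, following Ollivier's isoperimetric strategy. The key numerical input is the gross cogrowth $\theta$: by definition, the number $N_\eta$ of plain words of length $\eta$ representing the identity in $\Gamma$ grows like $(2m)^{\theta\eta}$, so the probability that a uniformly chosen $w \in W_\eta$ is trivial in $\Gamma$ is of order $(2m)^{(\theta-1)\eta}$. A random set $R \subset W_\eta$ of cardinality $\approx (2m)^{d\eta}$ therefore contains, in expectation, about $(2m)^{(d+\theta-1)\eta}$ trivial words, which tends to $0$ when $d < 1-\theta$. By Markov's inequality, with probability tending to $1$ no element of $R$ is trivial in $\Gamma$, so $\Gamma_P'$ is a genuine proper quotient.

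First I would leverage this cogrowth estimate to rule out ``local'' degeneracies via further first-moment bounds: with high probability distinct relators in $R$ are not $\Gamma$-conjugate, no relator coincides with a short conjugate of itself or of another relator, and no short element of $\Gamma$ appears as a relation of the quotient. Each of these is a routine counting argument once the gross cogrowth is identified as the right invariant controlling the probability that a random word equals a fixed element of $\Gamma$.

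Next I would establish hyperbolicity of $\Gamma_P'$ by verifying a small-cancellation condition for $R$ relative to $\Gamma$, strong enough to invoke a local-to-global principle of Champetier--Delzant--Ol'shanskii type and derive a linear isoperimetric inequality. The central estimate is that, for any $\lambda > 0$, the expected number of pairs $(r, r') \in R \times R$ sharing a common ``piece'' of length $\geq \lambda \eta$ in the Cayley graph of $\Gamma$ is of order $\# R^2$ times a factor of order $(2m)^{-(1-\theta)\lambda\eta}$ coming again from the gross cogrowth. The inequality $d < 1-\theta$ lets one choose $\lambda$ small enough that this expectation vanishes in the limit while still satisfying the quantitative threshold required by the small-cancellation theorem. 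Non-elementarity then follows from a growth argument: the added relators are negligible compared with the exponential growth of $\Gamma$, so $\Gamma_P'$ retains exponential growth and cannot be virtually cyclic.

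The main obstacle, and the technical heart of Ollivier's paper, is the small-cancellation step. In the free group, pieces are literal common subwords and the counting is purely combinatorial; in a general hyperbolic $\Gamma$, pieces are geometric objects in the Cayley graph, and counting overlaps must account for all near-coincidences of translates of subwords by short elements of $\Gamma$ --- this is precisely where the sharpness of the bound $d < 1 - \theta$ enters. Controlling the curvature of mixed van Kampen diagrams, namely those that use cells from both a fixed presentation of $\Gamma$ and the random relators of $R$, while avoiding pathologies caused by torsion, is where the harmless torsion hypothesis and a sharp quantitative isoperimetric inequality in the host hyperbolic group $\Gamma$ become indispensable.
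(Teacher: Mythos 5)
First, a remark on what you are being compared against: the paper does not prove this statement at all --- it is quoted verbatim from Ollivier's paper (\cite[Theorem 4]{Ollivier}) and used as a black box, so any assessment of your proposal has to be against Ollivier's actual argument rather than anything in the present text. Your identification of the gross cogrowth $\theta$ as the exponent governing the probability that a random plain word of length $\eta$ represents a prescribed element of $\Gamma$, and your first-moment argument that $R$ contains no trivial words when $d<1-\theta$, are correct and do match the opening moves of Ollivier's proof.

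The central step, however, has a genuine gap: the small-cancellation strategy you describe cannot work in the claimed density range. Your own estimate shows that the expected number of pairs of relators sharing a piece of length at least $\lambda\eta$ is of order $(\# R)^2(2m)^{-(1-\theta)\lambda\eta}\approx (2m)^{(2d-(1-\theta)\lambda)\eta}$, which tends to $0$ only when $\lambda>2d/(1-\theta)$. As $d$ approaches the threshold $1-\theta$ this forces $\lambda$ close to $2$, whereas any small-cancellation theorem of Champetier--Delzant--Ol'shanskii type requires $\lambda$ to be a small absolute constant (of order $1/6$ or so). Thus your argument would only cover densities $d<(1-\theta)/12$ or thereabouts, not the full range $d<1-\theta$; the sharpness of Ollivier's theorem is precisely that it goes far beyond the small-cancellation regime. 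Ollivier instead proves a linear isoperimetric inequality directly, by enumerating all abstract van Kampen diagrams with a bounded number of cells (decorated to account for the hyperbolic geometry of $\Gamma$ and the notion of ``apparent length''), bounding the probability that each can be fulfilled by relators of $R$, and then invoking a local-to-global principle of Gromov--Papasoglu type; no metric small-cancellation condition on $R$ is ever established or used. Separately, your non-elementarity argument --- that the relators are ``negligible compared with the exponential growth of $\Gamma$'' and hence the quotient retains exponential growth --- is not a proof: a single relator can collapse an infinite group, and nothing about the cardinality of $R$ alone controls the quotient. In Ollivier's treatment non-elementarity (and indeed non-triviality) is extracted from the same isoperimetric machinery, e.g.\ by showing that the quotient map is injective on a ball of radius proportional to $\eta$ and by controlling the cogrowth of the quotient, again using the harmless-torsion hypothesis to exclude degenerate diagrams.
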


See \cite{Ollivier} for the definitions of {\it gross cogrowth} and 
{\it harmless torsion}. 
 We remark here that the gross cogrowth of the free group of rank $m$ is
 equal to $\frac{1}{2} \log_{2m}(8m-4)$.

\end{document}